\DeclareFontFamily{OT1}{pzc}{}
\DeclareFontShape{OT1}{pzc}{m}{it}{<-> s * [1.10] pzcmi7t}{}
\DeclareMathAlphabet{\mathpzc}{OT1}{pzc}{m}{it}
\DeclareSymbolFontAlphabet{\mathbb}{AMSb}
\DeclareSymbolFontAlphabet{\mathbbl}{bbold}
\DeclareFontFamily{U}{wncyr10}{}
\DeclareFontShape{U}{wncyr10}{m}{n}{<->wncyr10}{}
\DeclareSymbolFont{mcy}{U}{wncyr10}{m}{n}
\DeclareMathSymbol{\bE}{\mathord}{mcy}{"42}
\DeclareMathSymbol{\e}{\mathord}{mcy}{"62}
\newcommand{\noun}[1]{\textsc{#1}}
\numberwithin{equation}{section}
\numberwithin{figure}{section}
\theoremstyle{plain}
\newtheorem{thm}{\protect\theoremname}[section]
\theoremstyle{remark}
\newtheorem{rem}[thm]{\protect\remarkname}
\theoremstyle{definition}
\newtheorem{defn}[thm]{\protect\definitionname}
\theoremstyle{definition}
\newtheorem{properties}[thm]{\protect\propertiesname}
\theoremstyle{definition}
\newtheorem{example}[thm]{\protect\examplename}
\theoremstyle{plain}
\theoremstyle{plain}
\newtheorem{prop}[thm]{Proposition}
\theoremstyle{plain}
\theoremstyle{definition}
\newtheorem*{bc}{\protect\bcname}
\def\uo{\underline{0}}
\def\ux{\underline{x}}
\def\QQ{\mathbb{Q}}
\def\RR{\mathbb{R}}
\def\CC{\mathbb{C}}
\def\ZZ{\mathbb{Z}}
\def\PP{\mathbb{P}}
\def\MM{\mathbb{M}}
\def\GG{\mathbb{G}}
\def\W{\mathcal{W}}
\def\VV{\mathbb{V}}
\def\F{\mathcal{F}}
\def\V{\mathcal{V}}
\def\X{\mathcal{X}}
\def\M{\mathcal{M}}
\def\H{\mathcal{H}}
\def\HH{\mathbb{H}}
\def\D{\mathcal{D}}
\def\ay{\mathbf{i}}
\def\ve{\varepsilon}
\def\cx{\mathcal{X}}
\def\IH{\mathrm{IH}}
\def\co{\mathcal{O}}
\def\ord{\mathrm{ord}}
\def\gr{\mathrm{Gr}}
\def\fa{\mathfrak{a}}
\def\fb{\mathfrak{b}}
\def\E{\mathcal{E}}
\def\EE{\mathbb{E}}
\def\J{\mathcal{J}}
\def\xan{X_{\CC}^{\text{an}}}
\def\DR{\mathrm{DR}}
\def\fx{\mathfrak{X}}
\def\fz{\mathfrak{Z}}
\def\rbe{\mathfrak{r}}
\def\hx{\hat{x}}
\def\hz{\hat{z}}
\def\qb{\bar{\QQ}}
\def\tx{\tilde{X}}
\def\st{\hat{t}}
\def\bbg{\mathbbl{\Gamma}}
\def\bba{\mathbf{A}}
\def\bbb{\mathbf{B}}
\def\bbc{\mathbf{C}}
\def\bbd{\mathbf{D}}
\def\ta{\widetilde{\bba}}
\def\tb{\widetilde{\bbb}}
\def\tc{\widetilde{\bbc}}
\def\td{\widetilde{\bbd}}
\def\th{\hat{t}}
\def\bxr{\square_{\text{rel}}}
\def\dlog{\mathrm{dlog}}
\def\Alpha{\mathrm{A}}
\def\CH{\mathrm{CH}}
\def\Spec{\mathrm{Spec}}
\def\Gm{\mathbb{G}_m}
\def\sm{\setminus}
\newenvironment{psmatrix}
  {\left(\begin{smallmatrix}}
  {\end{smallmatrix}\right)}
\newcommand\Exp[1]{e^{#1}}
\newcommand{\mystrut}{{\vrule depth 1em width 0em height 2em}}
\theoremstyle{definition}
\newtheorem*{thx}{Acknowledgments}
\providecommand{\corollaryname}{Corollary}
\providecommand{\definitionname}{Definition}
\providecommand{\remarkname}{Remark}
\providecommand{\theoremname}{Theorem}
\providecommand{\examplename}{Example}
\providecommand{\propertiesname}{Properties}
\providecommand{\definitionname}{Question}
\providecommand{\bcname}{Beilinson Conjecture I}
\begin{document}

\title[Calabi-Yau Regulators]{The arithmetic of Calabi-Yau motives \\ and mobile higher regulators}

\author[Golyshev and Kerr]{Vasily Golyshev and Matt Kerr}

\begin{abstract}
We construct elements in the motivic cohomology of certain rank 4 weight 3 Calabi--Yau motives, and write down explicit expressions for the regulators of these elements in the context of conjectures on $L$-values such as those of Beilinson or Bloch-Kato. We apply a combination of three ideas: (i) that a motive can be made to vary in a family in such a way that a desired motivic cohomology class is realized by relative cohomology; (ii)~that there are ways to construct higher-rank (such as $2\times 2$) regulators from a single family; and (iii) that one can arrange elements in $H^4_{\text{Mot}}(X,\mathbb{Z}(p))$ with different $p$'s by choosing hypergeometric families with different local exponents.

Following background material on Hodge theory, algebraic cycles, differential equations, and hypergeometric variations, we work out two cases in detail where $p=3,4$.  Regarding our Calabi-Yau motives $X_t$ as fibers in a suitable total space $\mathcal{X}_U\to U\subset \mathbb{P}^1$, each Hodge class in $\mathrm{Hom}_{\mathrm{MHS}}(\mathbb{Q}(0),H^4(\mathcal{X}_U,\mathbb{Q}(p)))$ produces a family of extension classes in $\mathrm{Ext}^1_{\mathrm{MHS}}(\mathbb{Q}(0),H^3(X_t,\mathbb{Q}(p)))$ called a normal function. Our main results for these cases, which are essentially independent, are the explicit computation of the normal functions, and the construction of motivic cohomology cycles realizing the Hodge classes, thereby proving Beilinson's Hodge-type conjecture and providing the first numerical checks of Beilinson's conjectures on special values of $L$-functions for such motives.
\end{abstract}
\maketitle

\section{Introduction}\label{Intro}

Algebraic cycles are central to modern arithmetic geometry, one of whose main principles is that one should seek to linearize the category $\mathcal{V}ar$ by imposing structures on cohomologies of algebraic varieties that make them similar to group representations. The bigger this group is, the more information about varieties and their morphisms is retained by this linear simplification. Ideally, the group should be of such sophistication as to force any module over it to be realized as a piece of cohomology of an algebraic variety, and a morphism of two modules to arise from a morphism in  $\mathcal{V}ar$, with the dream of ultimately constructing an overarching ``motivic Galois'' group solving this linearization problem in a universal way. 

Yet it would be useless to try to characterize an algebraic cycle class $Z$, say of codimension $d$, by how the corresponding vector is acted upon in such a representation, because by the very definition of a Weil cohomology theory it should stay essentially invariant. For instance, the $p$--Frobenius should act on the respective cohomology class in $l$--adic cohomology by multiplying it by $p^d$, while the integral of the respective de Rham form over the manifold $Z (\CC)$ gives $(2 \pi \ay)^d$. In order to not merely distinguish between two classes $Z_1$ and $Z_2$, but do so in a quantitative way, one turns to  ``from--to'', or Abel--Jacobi, integration patterns. For instance, there is little of interest in what integration over a point --- say, $\Spec(\QQ(\sqrt{5}))$ --- can give; but positioning it as a $0$--cycle in $\Gm$ with coordinate the golden ratio, and integrating the standard differential form $\frac{dz}{z}$ from the origin along various paths bounding on it, yields $\log (\frac{1+\sqrt{5}}{2}) + 2 \pi i \, \ZZ.$ Even though these integrals are not uniquely defined, with the $H^1$ of $\Gm$ itself measuring the ambiguity, their real part \emph{is}, and its significance with respect to the original $0$--dimensional scheme is that    
$\sum_{n=1}^\infty \frac{\chi_5 (n)}{n} = \frac{2}{\sqrt{5}} \log (\frac{1+\sqrt{5}}{2})$, or, equivalently, $L'(\QQ (\sqrt{5}),0) = - \frac{1}{2} \log (\frac{1+\sqrt{5}}{2})$. 

According to Beilinson's conjectures, this should extend to other motives. Bloch's higher cycles are designed exactly in such a way as to give rise to an integration pattern where an Abel--Jacobi--type integral\footnote{This refers to the \emph{regulator periods} we shall encounter below, which come from evaluating a well-chosen ``lift to cohomology'' of the Abel--Jacobi class against a Betti class instead of a de Rham one.  Strictly speaking, even these often have additional ambiguities by ``irrelevant periods'' (irrelevant in the sense that they are killed in the regulator determinant), but in the families we consider the choice of lift in Theorem \ref{t3a} will always eliminate these.} is in a natural way defined only up to $(2 \pi \ay)^m$ for some $m \in  \ZZ$.  Starting from a complete system of independent cycles, one produces matrices whose entries are the integrals arising in this fashion; $L$--values are then expected to be proportional to their determinants. The farther away from the center of the critical strip the $L$--argument is, the more extrinsic dimensions for Abel--Jacobi integration are needed, and the more involved the counterpart of $\frac{dz}{z}$ becomes:  think  of the factor $\mathrm{Li}_2 (\theta), \; \theta= \exp (2 \pi i/3)$ in the RHS of
$$ \zeta(2) \sum_{n=1}^\infty \frac{\chi_{-3} (n)}{n^2} =:  \zeta (\QQ (\sqrt{-3}),2) = \frac{\sqrt{3}}{9} \pi ^2\, \mathop{\mathrm{Im}}\sum_{n=1}^\infty \frac{\theta^n}{n^2} $$
as arising from integrating  a standard current along the $1$--cycle $Z_3$ in $(\PP^1\setminus \{ 1 \})^3$
given by
the parametrized curve $(1-\frac{\theta}{t},1-t,t^{-3})$: 
$$ (2\pi \ay) \; \mathrm{Li}_2 (\theta) = \int_{Z_3}
\log z_1 \frac{dz_2}{z_2} \wedge \frac{dz_3}{z_3} +
(2\pi \ay) \log z_2 \frac{dz_3}{z_3} \cdot \delta_{\{z_1 \in \RR_{\le 0}\}}+
(2\pi \ay)^2 \log z_3 \cdot \delta_{\{z_1 \in \RR_{\le 0}\} \cap \{z_2 \in \RR_{\le 0}\}}
,$$
where $\delta_{\mathcal{C}}$ means the current of integration over the chain $\mathcal{C}$.\footnote{In fact, the first and third terms pull back to zero on $Z_3$.  See \S\ref{Sell} for details of a similar example.}

The subject of this paper is Hodge theory of Calabi--Yau motives, insofar as it is needed for computing regulators, and to get to it we invert the logic just explained. As a vague principle, exhibiting an ``integration scheme'' (i.e., a system of films and integrands on them) that produces an extension period matrix $\Pi'(M)$ whose blocks are the pure period matrix $\Pi (M)$ of the motive $M$ in question and a $1\times 1$ block with the entry $(2\pi \ay)^m$ in it --- as long as this integration is of algebro-geometric nature --- will signal the presence of a higher cycle whose regulator is expected to be an expression in the entries of $\Pi'$.  It is not even important whether or not we are actually able to construct such a cycle, as the only thing we care about is the regulator itself. 

\subsection*{(i) Cycles from deformations of CY motives}
We can make this very specific by introducing three more ideas. One is that making a motive vary in a family can produce an extension as above: with some luck, one can arrange the 1x1 block to come from the total cohomology of the family, which can be much simpler than the motives in question. (Imagine $\mathbb{A}^n$ being fibered out over $\mathbb{A}^1$ by a random polynomial with its complicated level hypersurfaces.) To deform our $L'(\QQ (\sqrt{5}),0)$ example, in \S\ref{SCY0} we consider the family of ``CY $0$-folds'' $X_t=\{x_+(t),x_-(t)\}$ defined, for $t\in \CC\sm\{0,\tfrac{1}{4}\}$, by $x^2-(t^{-1}-2)x+1=0$.  This yields a hypergeometric regulator expression $\log(t)+\sum_{k>0} \binom {2k}{k}\tfrac{t^k}{k}$ corresponding to a family of classes in $\CH^1(X_t,1)$ which come from restricting the torus coordinate to $X_t$.

This generalizes well to families of CY $(n-1)$-folds given as level sets in $\Gm^{n}$ of a ``tempered'' Laurent polynomial $\phi$ (cf.~\S\ref{Stemp}).  Here the family of ``cycles'' in $\CH^{n}(X_t,n)$ is given by restricting the torus coordinates to $X_t$,\footnote{That is, we consider the restriction of the diagonal in $(\PP^1)^{n}\times (\PP^1)^{n}$ to $X_t\times (\PP^1\sm \{1\})^{n}$ and ``complete'' it to a higher Chow cycle.} with regulator period $\log(t)+\sum_{k>0}a_k\tfrac{t^k}{k}$; here $a_k$ is the constant term in $\phi^k$, which in the hypergeometric cases is a product of multinomial symbols.  Such expressions suffice for families of elliptic curves (cf.~\S\ref{Sell}), but not for CY 3-folds, even if we can compute more Betti periods of the regulator class.

\subsection*{(ii) Two classes from a single family}
Indeed, the cycles and regulators just described (and studied extensively in \cite{DK,BKV,Ke}) only relate to a single class in $\CH^4(X,4)$ in the context of CY 3-folds.  Thus, we are starting from scratch when it comes to $\CH^3(X,2)$.  Moreover, the rank conjecture for $\CH^4(X,4)$ predicts the existence of \emph{two}, rather than one, independent cycles.  In a way, both issues can be addressed by the choice of hypergeometric indices (together with applying a base-change to the family).  Indeed, the $(2\pi\ay)^m$ ambiguities in our Abel-Jacobi integration schemes, which control the ``types'' of the putative higher cycle families (via $m\longleftrightarrow \CH^m(X,2m-4)$), are determined by the hypergeometric data.


To state our  $\CH^4(X,4)$ result, consider the (singular) CY 3-fold family $X_t\subset (\PP^1)^4$ given by
$\textstyle 0=1-t\prod_{i=1}^4\frac{(x_i+1)^2}{x_i}$
over $\CC\sm \{0,\tfrac{1}{256}\}$.  The cohomology $\IH^3(X_t)$ contains the hypergeometric VHS with data $(\fa,\fb)=((\tfrac{1}{2})^4,(1)^4)$ and period $\sum_{k\geq 0}\binom{2k}{k}^4 t^k$.
Set $G_k=\sum_{\ell=k+1}^{2k}\tfrac{1}{\ell}$ and $\tilde{G}_k:=8G_k^2-2\sum_{\ell=1}^{2k}\tfrac{1}{\ell^2}+\sum_{\ell=1}^k\tfrac{1}{\ell^2}+\zeta(2).$ Then we have

\medskip

\sc Theorem A \rm (cf.~Thm.~\ref{tK4}). (a) \it For every $t \in (0,1/256)$, there exist two linearly independent higher cycles $\xi_t, \fz_t \in \CH^4(X_t,4)$  such that the coresponding regulator volume admits the hypergeometric expression

$$\det\begin{psmatrix}
\sqrt{t}\sum_{k\geq 0}\frac{\binom{2k}{k}^4}{k+\frac{1}{2}}t^k & \phantom{hh}\frac{\sqrt{t}}{4\pi^2}\sum_{k\geq 0}\binom{2k}{k}^4 t^k\left\{\substack{\frac{4\tilde{G}_k(k+\frac{1}{2})^2-8G_k(k+\frac{1}{2})+1}{(k+\frac{1}{2})^3} \\ +\frac{8G_k(k+\frac{1}{2})-1}{(k+\frac{1}{2})^2}\log(t)+\frac{1}{k+\frac{1}{2}}\log^3(t)}\right\}
\\ \phantom{h}&\phantom{h}\\
\log(t)+\sum_{k>0}\frac{\binom{2k}{k}^4}{k}t^k & \frac{1}{4\pi^2}\left(\substack{\{-8\zeta(3)+\sum_{k>0}\binom{2k}{k}^4\frac{4\tilde{G}_k k^2-8G_k k +1}{k^3}t^k\} \\ +\{4\zeta(2)+\sum_{k>0}\binom{2k}{k}^4\frac{8G_k k-1}{k^2}t^k\}\log(t)\\ +\{\sum_{k>0}\binom{2k}{k}^4\frac{1}{k}t^k\}\log^2(t)\;+\;\frac{1}{6}\log^3(t)} \right)
\end{psmatrix}.$$

\rm (b) \it If, in addition, $t \in \{ 4^{-5},\,4^{-6},\,4^{-7},\,4^{-8}\}$, the cycles
$\xi_t,\fz_t$ extend to cycles in $\CH^4(\mathfrak{X}_t,4)$ of an integral model.\rm 

\medskip

This is used to numerically verify Beilinson's conjecture for these 4 fibers at the end of \S\ref{S6}.  As a conceptual warm-up to this result, in \S\ref{S2cl} we produce a hypergeometric expression for a rank-2 regulator of a number field.

\subsection*{(iii) Classes of non-symbol type}

Theorem A, together with rank 1 cases of $\CH^2(X,0)$ (or ``Birch--Swinnerton-Dyer'') regulators studied in \cite{Go23}, provides evidence in favor of Beilinson's conjecture for $L''(M_t,0)$ and $L'(M_t,2)$, where $M_t$ is a family of hypergeometric motives. To treat the $L'(M_t,1)$ case, we have to construct cycles and compute regulators for $\CH^3(X,2)$.  This is done by taking $X=X_t$ to be a (Hadamard) fiber product of two families of elliptic curves, on which one has pre-existing families of cycles whose cup-product lives on $X$.  (Informally, think $\CH^1(\mathcal{E}^{(1)},0)\cup \CH^2(\mathcal{E}^{(2)},1)$, though this isn't quite correct; see \S\ref{Shad}.) 

We can state the result as follows.  Let $X_t$ denote the CY 3-fold family 
with twisted hypergeometric period $\sum_{k\geq 0} (-1)^k\binom{4k}{2k}\binom{2k}{k}^3 t^k$ constructed in \S\ref{Shad}.  Though there is only one relevant cycle, the Beilinson regulator is still given by a $2\times 2$ determinant as we explain in \S\ref{S4}.  Write $z=2^{10}t$, $\Gamma^{\alpha}_k:=\prod_{i=1}^4\frac{\Gamma(\alpha+k)}{\Gamma(\alpha+k+\fa_i)}$,  $S_{\alpha}(z):=\sum_{k\geq 0}\Gamma^{\alpha}_kz^{-k}$, and $R_{\alpha}(z):=\sum_{k\geq 0}\Gamma^{\alpha}_k\frac{z^{-k}}{k-\frac{1}{2}+\alpha}$ 
(with $\sum_{k>0}$ in $R_{\alpha}$ if $\alpha=\tfrac{1}{2}$), where $z=2^{10}t$.

\medskip

\sc Theorem B \rm (cf.~Prop.~\ref{p10a} \& Thm.~\ref{tK2}). (a) \it For every $t\in (2^{-10},\infty)$, there exists a cycle $Z_t\in \CH^3(X_t,2)$ whose regulator volume has the hypergeometric expression
$$\det\left(\begin{matrix}  4(\log(4z)+4) -\frac{\sqrt{2}}{\pi}R_{\frac{1}{2}}(z) & \frac{4\sqrt{2}}{\pi\sqrt{z}}S_{\frac{1}{2}}(z) \\ -\frac{1}{4\pi}(z^{\frac{1}{4}}R_{\frac{1}{4}}(z)+z^{-\frac{1}{4}}R_{\frac{3}{4}}(z)) & \frac{1}{\pi}(z^{-\frac{1}{4}}S_{\frac{1}{4}}(z)+z^{-\frac{3}{4}}S_{\frac{3}{4}}(z)) \end{matrix}\right).$$

\rm (b) \it This cycle extends to an integral model for every $t$ of the form $n^2/4^k$ \textup{(}$n\in \ZZ_{>0}$, $k\in \ZZ$\textup{)}.\rm

\medskip

\noindent Once again, this leads to numerical verifications of Beilinson.

\subsection*{Why hypergeometric variations?}
According to the principle explained at the beginning, it suffices to construct an ``integration pattern'' --- or more formally, a \emph{normal function} --- mimicking the log function with its $(2 \pi i)\, \ZZ$ ambiguity arising from the Hodge\,(--Tate) structure on $H^1 (\Gm, \ZZ) = \ZZ (-1)$ itself. To do it, we stick with $\Gm$ but introduce cohomology with coefficients in a hypergeometric variation $\mathcal{H}=\mathcal{H}_{\fa,\fb}$ of pure Hodge structure.  The role of the function $\log z=\int 1\tfrac{dz}{z}$ will now be played by the primitive of a hypergeometric period.  By Katz, hypergeometricity is equivalent to $\mathrm{ih}^1 (\Gm, \mathcal{H})=1$, forcing $\IH^1(\Gm,\mathcal{H})$ to be Tate, hence generated by the class of a normal function, and thus (conjecturally) by a family of algebraic cycles.  By passing to a double cover via $z\mapsto z^2\colon \Gm\to \Gm$, this $\IH^1$ becomes a direct sum of Tate classes in possibly different weights, which we can control by tuning the indices $\fa_i, \, \fb_j$.  We demonstrate the computation of these weights, hence the types of putative cycles, in the table of \S\ref{S5} for hypergeometric $(1,1,1,1)$-variations with a maximal unipotent monodromy point.  (Also see Appendix \ref{appB}.)  This shows that hypergeometric variations yield a rich source of examples which can be calibrated to produce cycles of the desired type.

Hypergeometric VHS and their basechanges are also the only ones for which the \emph{Frobenius deformation} $\Phi(s,z)$ defined by \eqref{e2h} can be computed in closed form easily.  This makes it straightforward to compute all the Betti periods of a holomorphic section, and can be generalized to regulator periods.  Moreover, both kinds of periods admit explicit analytic continuation in the hypergeometric setting.

\subsection*{Outline of the paper}

We begin in \S\ref{SAJ} with an informal account of Bloch's higher Chow groups and the explicit Abel-Jacobi maps which send them to Deligne cohomology.  The two simplest examples relevant to this paper (and to the Beilinson conjectures) are $\CH^2(E_t,2)$ of a variant of the Legendre elliptic curve family, and $\CH^1(X_t,1)$ for a family of pairs of points (specializing to quadratic field extensions).  These are worked out in \S\S\ref{Sell}-\ref{SCY0}, followed by an example of rank-two regulators for number fields in \S\ref{S2cl}.

Next, we discuss two approaches to solving inhomogeneous equations of the form $L(\cdot)=t^{1/d}$, where $L$ is a hypergeometric differential operator attached to a family of CY varieties.  The first (in \S\ref{S2}) is by elementary complex analysis, using the Frobenius deformations of \cite{GZ,BV,Ke}, and gives an explicit series solution.  The second is via generalized normal functions on a base-change of the family, which canonically give solutions to inhomogeneous Picard-Fuchs equations by \cite{dAM,GKS}, cf.~\S\ref{S3}.  We also include a new result (extending Thm.~5.1 of \cite{GKS}) about normalizing the lifts of normal functions, which is surprisingly simple and crucial for computing regulator classes in families:

\medskip

\sc Theorem C \rm (cf.~Thm.~\ref{t3a}). \it Suppose we have
\begin{itemize}[leftmargin=0.5cm]
\item a weight $n$ polarized $\QQ$-VHS $\mathcal{M}$ on $U\subset \PP^1_t$, of $\bar{\QQ}$-motivic origin, with all Hodge numbers $h^{k,n-k}=1$ \textup{(}$0\leq k\leq n$\textup{)} and satisfying the remaining assumptions of \S\ref{S2};
\item a holomorphic section $\mu$ of the extended Hodge line bundle $\mathcal{F}_e^n\mathcal{M}_e \cong \mathcal{O}_{\PP^1}(h)$, with corresponding Picard-Fuchs operator $L$ of degree $d$; and
\item a nonzero normal function $\nu\in \mathrm{ANF}_U(\mathcal{M}(p))$ \textup{(}for some $p\in [\tfrac{n+1}{2},n+1]\cap \ZZ$\textup{)}, nonsingular on $\Gm$, arising from a cycle.  \textup{(}Here $\mathrm{ANF}$ stands for ``admissible normal function''.\textup{)}
\end{itemize}
Then there exists a multivalued lift $\tilde{\nu}$ of $\nu$ to $\mathcal{M}$ such that $\nabla_{t\frac{d}{dt}}\tilde{\nu}=\mathsf{Q}_0(2\pi\ay)^n f\mu$, where $\mathsf{Q}_0\in \QQ^{\times}$ and $f:=L\langle \tilde{\nu},\mu\rangle$ is a nonzero polynomial in $\bar{\QQ}[t]$ of degree $\leq d-h$.\rm

\medskip

Since one expects algebraic cycles (both ``classical'' and ``higher'') to underlie these normal functions via Abel-Jacobi maps, the special values of the solutions (at algebraic points in moduli) should be of arithmetic interest.  In \S\ref{S4} we review how the AJ maps are refined into regulator maps at $\QQ$-points, and the relevant cases of the Beilinson conjectures at those points. Turning then to the hypergeometric examples of CY 3-fold type classified 
in \cite{DM}, 
we explain in \S\ref{S5} how to identify which types of cycles arise --- viz., $K_0$, $K_2$, or $K_4$ classes.

The remainder of the article is focused on two case studies.  In \S\ref{S6} we consider a pair of $K_4$-cycles on the double-cover of the family with hypergeometric data $(\fa,\fb)=((\tfrac{1}{2})^4,(1)^4)$ and use Frobenius deformations to produce an explicit expression for Beilinson's regulator determinant $r(t)$.  In \S\ref{S8} a different approach, via Hadamard products and analytic continuation via Mellin-Barnes integrals, is used to evaluate the regulator $r(t)$ of a family of $K_2$-cycles on double cover of the family with $(\fa,\fb)=((\tfrac{1}{4},\tfrac{1}{2},\tfrac{1}{2},\tfrac{3}{4}),(1)^4)$.  Since this is the first place we know of where Hadamard products are used to construct higher cycles and evaluate AJ maps in families, we include in \S\ref{S7} a more elementary example of how this works.

At the end of sections \S\ref{S6} and \S\ref{S8}, we present numerical evidence that the respective quantities $L''(M_t,0)/r(t)$ and $L'(M_t,0)/r(t)$ are rational for those $t\in \QQ$ for which the cycle extends to an integral model of the given hypergeometric motive $M_t$.  The numerator of each quantity (which we use Magma and Pari to compute) is related to Galois representations and automorphic forms for $\mathrm{Sp}_4$, subject to the standard conjectures on analytic continuation and functional equations for the $L$-functions.  This paper is focused on developing formulas for the denominator, which belongs to the world of periods, algebraic cycles, and Hodge theory.  

To recapitulate, our formulas arise as special cases in a larger study of inhomogeneous differential equations attached to families of generalized algebraic cycles \cite{dAM,MW,DK,GKS}.  While the Beilinson Conjectures are about real regulators, using integral or rational regulators (with holomorphically varying periods) turns out to be the key.  These ``generalized periods'' are attached to higher normal functions, which allow us to bypass the construction of cycles --- except for determining which $t\in \QQ$ should satisfy the conjecture.  In this spirit, we offer a short conjectural computation in Appendix \ref{appB} demonstrating how a  check of Beilinson for $K_2$ might go without cycles in a non-MUM hypergeometric case.

As far as we know, the two case studies in this paper are the first verifications of Beilinson's First Conjecture for families of $\mathrm{Sp}_4$-motives in higher weight.  We hope that the ethos of ``complex analytic functions specializing to artithmetic quantities'' might ultimately point the way to new proofs of the Beilinson Conjectures.

\section{A primer on higher cycles and regulators}\label{Sreg}

In \cite{Bl}, S.~Bloch introduced a geometrization of higher $K$-theory in terms of \emph{higher Chow cycles}, which can be summarized in the Bloch-Grothendieck-Riemann-Roch theorem
$$K_r^{\text{alg}}(X)\otimes \QQ\cong\oplus_p \mathrm{CH}^p(X,r)\otimes \QQ$$
for $X$ smooth quasi-projective.  The higher Chow groups are defined as homology of a complex, $\mathrm{CH}^p(X,r):=H_r\{Z^p(X,\bullet),\partial\}$; here 
\begin{itemize}[leftmargin=0.5cm]
\item $Z^p(X,r)$ are codimension-$p$ cycles on $X\times (\PP^1\setminus\{1\})^r$ meeting $X\times\text{``faces''}$ properly (where ``face'' means that some $\PP^1$-coordinates are set to $0$ or $\infty$); and
\item the differential $\partial$ is given by alternating sum of facet intersections.
\end{itemize}
By the ``normalization'' property they can be thought of as relative Chow groups, viz.
\begin{equation}\label{eqCH}
\mathrm{CH}^p(X,r)\cong \mathrm{CH}^p(X\times \square^r_{\text{rel}}),
\end{equation}
where $\square_{\text{rel}}:=(\PP^1\setminus\{1\},\{0,\infty\})$ is a sort of algebraic loop space (with ``Lefschetz dual'' $\square_{\text{rel}}^{\vee}=(\mathbb{G}_m,\{1\}$).  We will use ``$K_r$'' as a shorthand, with $K_0$ standing for usual cycles and $K_{>0}$ for ``higher'' ones.  Here is a brief ``table of contents'' for the examples involving Calabi-Yau families worked out in this paper:
\begin{itemize}[leftmargin=0.5cm]
\item \S\ref{Sreg} (and \S\ref{S7}): $K_2$ of CY 1-folds;
\item \S\ref{SCY0}:  $K_1$ of CY 0-folds;
\item \S\ref{S6} (and \S\ref{S7}):  $K_4$ of CY 3-folds;
\item \S\ref{S8}:  $K_0$ (and $K_2$) of CY 1-folds and $K_2$ of CY 3-folds.
\end{itemize}

What follows is an extremely informal description of higher cycles and their Abel-Jacobi (integral regulator) maps.  When these vary in families, one gets normal functions, which are abstract Hodge-theoretic objects reviewed in \S\ref{S3}.

\subsection{Abel-Jacobi maps}\label{SAJ}
If $X$ is smooth projective, then \eqref{eqCH} has a generalized Griffiths AJ map to the  generalized intermediate Jacobian
\begin{align*}
J^{p,r}(X)&:=\frac{H^{2p-1}(X\times \bxr^r,\CC)}{F^pH^{2p-1}(X\times \bxr^r,\CC)+H^{2p-1}(X\times\bxr^r,\ZZ(p))}\\ &\cong \tfrac{H^{2p-r-1}(X,\CC)}{F^pH^{2p-r-1}(X,\CC)+H^{2p-r-1}(X,\ZZ(p))}\cong \mathrm{Ext}^1_{\text{MHS}}(\ZZ(0),H^{2p-r-1}(X)(p))
\end{align*}
where the first line equals the second since $H^i(\bxr^r)\cong \ZZ(0)$ for $i=r$ and $0$ otherwise.  Heuristically speaking, if $\partial{\bm \Gamma}=\fz$ in the complex of relative chains $\mathcal{C}_{\bullet}(X\times (\GG_m,\{1\})^r)$, then $$\mathrm{AJ}(\fz)(\omega):=\int_{\bm \Gamma}\omega\wedge \tfrac{dz_1}{z_1}\wedge\cdots\wedge \tfrac{dz_r}{z_r}$$ on test forms $\omega$ in 
$$F^{d_X-p+1}H^{2d_X-2p+r+1}(X,\CC)\cong \left(\frac{H^{2p-r-1}(X,\CC)}{F^pH^{2p-r-1}(X,\CC)}\right)^{\vee}.$$
Using calculus of currents, one then reinterprets this as $$\mathrm{AJ}(\fz)(\omega)=\int_X \omega\wedge \tilde{R}_{\fz},$$ where $R_{\fz}\in D^{2p-r-1}(X)$ is $(2\pi\ay)^{p-r}$ times the Radon transform\footnote{Heuristically, this means:  pull back to $\fz$, push forward to $X$.  Where some justification/moving lemma is needed is in checking that the ``pullback'' is defined as a current.  See \cite{KLi} and references therein.} via $\fz$ of the regulator current $R_r$ defined iteratively\footnote{See \cite{Ke03} for the explicit iterative geometric construction of $\bm{\Gamma}$ which ``explains'' this.} by $$R_r(z_1,\ldots,z_r):=\log(z_1)\tfrac{dz_2}{z_2}\wedge\cdots \wedge \tfrac{dz_r}{z_r}-2\pi\ay \delta_{T_{z_1}}.R_{r-1}(z_2,\ldots,z_r),$$
where $T_f:=f^{-1}(\RR_-)$. The tilde over $R$ means that $R_{\fz}$ itself may not be closed, and you need to replace it by $$\tilde{R}_{\fz}:=R_{\fz}-\Xi+(2\pi\ay)^p\delta_{\Gamma}$$
where $\Gamma\in \mathcal{C}_{2d_X-2p+r+1}(X)$ bounds on the Radon transform (denoted $T_{\fz}$) of $T_{z_1}\cap\cdots \cap T_{z_r}$ and $\Xi \in F^p {D}^{2p-r-1}(X)$ bounds on the Radon transform (denoted $\Omega_{\fz}$) of $(2\pi\ay)^{p-r}\tfrac{dz_1}{z_1}\wedge\cdots \wedge \tfrac{dz_r}{z_r}$.  But $\Xi$ dies against the test forms $\omega$ by type, so can be ignored in the AJ integral.

The justification that this works integrally is in \cite{KLi}, though we don't need that (real regulator is coarser than rational).  There are simplifications in special cases:  to make $\tilde{R}$ closed, we don't need the $\Xi$ current if $p>d_X$ or $p\leq r$, and we don't need $\Gamma$ if $2p-r\notin [0,2d_X]$ (or if $T_{\fz}$ is empty).  For the examples in the remainder of this section, we can ignore both and work with $R_{\fz}$.

Because it is briefly relevant in \S\S\ref{S7}-\ref{S8}, we recall how the Abel-Jacobi map was formalized in \cite{KLe}.  There is a rationally quasi-isomorphic subcomplex of ``$\RR$-proper'' cycles $Z^p_{\RR}(X,-\bullet)\subset Z^p(X,-\bullet)$ on which the above currents are well-defined.  We also have a complex
$$\mathcal{C}_{\mathcal{D}}^{2p+\bullet}(X,\ZZ(p)):=\mathcal{C}^{2p+\bullet}_{\text{sing}}(X;\ZZ(p))\oplus F^p D^{2p+\bullet}(X)\oplus D^{2p+\bullet-1}(X)$$ with the ``cone differential'' $D(T,\Omega,R):=(-\partial T,-d\Omega,dR-\Omega+\gamma)$, whose $k^{\text{th}}$ cohomology is Deligne cohomology $H^{2p+k}_{\mathcal{D}}(X,\ZZ(p))$.  Sending elements $\fz\in Z^p_{\RR}(X,r)$ to 
\begin{equation}\label{eTriples}
(2\pi\ay)^{p-r}((2\pi\ay)^r T_{\fz},\Omega_{\fz},R_{\fz})\in \mathcal{C}_{\mathcal{D}}^{2p-r}(X,\ZZ(p))
\end{equation}
induces a morphism of complexes, hence a map from $\mathrm{CH}^p(X,r)_{\QQ}$ to $H^{2p-r}_{\mathcal{D}}(X,\QQ(p))$.  This is compatible with pushforwards, pullbacks, and products.

\subsection{Two examples}\label{Sell}
Consider first the relative cycle  in $\bxr^3$ given parametrically by $z\longmapsto (1-\tfrac{\ay}{z},1-z,z^4)$, and take $\fz_0\in \mathrm{CH}^2(\QQ(\ay),3)$ to be $\tfrac{1}{4}$ times this.  In this case, $X$ is a point, $2p-n-1=0$, and $R_{\fz_0}\in \CC/\QQ(2)$ is a number given by\footnote{Here $\dlog(1-\tfrac{\ay}{z})\wedge\dlog(1-z)=0$ kills the first term of $R_3$, and the fact that $T_{1-\frac{\ay}{z}}=[0,\ay]$ and $T_{1-z}=[1,+\infty]$ don't intersect kills the last term.}
\begin{align*}
\frac{1}{4(2\pi\ay)}\int_{\PP^1}R_3(1-\tfrac{\ay}{z},1-z,z^4)	
&=-\int_{\PP^1}\delta_{T_{1-\frac{\ay}{z}}}.\log(1-z)\dlog(z^4)\\
&=-\int_0^{\ay}\log(1-z)\frac{dz}{z}=\mathrm{Li}_2(\ay).
\end{align*}
If $\fz:=\fz_0-\overline{\fz_0}$, then we get $\mathrm{AJ}(\fz)=\mathrm{Li}_2(\ay)-\mathrm{Li}_2(-\ay)=2\ay L(\chi_{-4},2)$, in agreement with Borel's theorem.  A similar (but more complicated) computation shows that the cycle $Z\in \mathrm{CH}^2(\QQ(\ay),3)$ parametrized by\footnote{This is a higher Chow cycle; to get a relative cycle in $\bxr^3$ one has to subtract a second component parametrized by $(z,-1,-1)$ from it. This has no effect on the regulator.} $$z\longmapsto (z,f(z),g(z)):=\left(z,-\frac{(z+1)^2}{(z-1)^2},-\frac{(z+i)^2}{(z-i)^2}\right)$$ has $\mathrm{AJ}(Z)=16\ay L(\chi_{-4},2)$.

Next look at the family $\mathcal{E}\to \PP^1$ of elliptic curves given by $$E_t:=\{([X_0{:}X_1],[Y_0{:}Y_1])\mid X_0X_1Y_0Y_1=t(X_1-X_0)^2(Y_1-Y_0)^2\}\subset \PP^1\times\PP^1$$
with singular fibers at $t=0$ ($\mathrm{I}_4$), $t=\tfrac{1}{16}$ ($\mathrm{I}_1$), and $t=\infty$ ($\mathrm{I}_1^*$).  We write the equation as $1-t\phi=0$ for short, with $\phi=(x-1)^2(y-1)^2/xy$. One easily checks that $\xi_t:=\{(e,x(e),y(e))\mid e\in E_t(\CC)\}$ defines a relative cycle in $E_t\times \bxr^2$, and the regulator current $R_t=R_{\xi_t}=\log(x)\dlog(y)-2\pi\ay\log(y)\delta_{T_x}$ is closed for $t\notin [0,\tfrac{1}{16}]\cup\{\infty\}$.  In fact, we have a global higher cycle $\xi=\{x,y\}\in \mathrm{CH}^2(\mathcal{E}\setminus E_0,2)$, of which $\xi_t\in \mathrm{CH}^2(E_t,2)$ are specializations, with integral regulator classes $[R_t]\in H^1(E_{t,\CC}^{\text{an}},\CC/\ZZ(2))$.

For $0<|t|\leq\tfrac{1}{16}$, the ``vanishing cycle at $0$'' $\ve_0$ is well-defined, and we want to compute $\Psi(t):=\int_{\ve_0}R_t$.  Since $\mathrm{Tube}(\ve_0)=\mathbb{T}:=\{|x|=|y|=1\}\subset \PP^1\times\PP^1\setminus E_t$ and $\mathrm{Res}\{t^{-1}-\phi,x,y\}=\{x,y\}|_{E_t}$, one deduces that
\begin{equation}\label{ePsi}
\Psi(t)\equiv \frac{1}{2\pi\ay}\int_{\mathbb{T}}\log(t^{-1}-\phi)\frac{dx}{x}\wedge\frac{dy}{y}=-2\pi\ay\left\{\log(t)+\sum_{m>0} \binom{2m}{m}^2\frac{t^m}{m}\right\},
\end{equation}
mod $\QQ(2)$, where $\binom{2m}{m}^2$ arises as the constant term in $\phi^m$.

What happens at $t=\tfrac{1}{16}$, where $E_{\frac{1}{16}}$ is a nodal rational curve?  The right-hand side of \eqref{ePsi} still converges, and we can compute the answer in a different way, by pulling back $R_{\xi_{\frac{1}{16}}}$ along a normalization.  Noting that $(f(z)-1)^2(g(z)-1)^2=16f(z)g(z)$, we deduce that this is given by $z\mapsto (f(z),g(z))$, and $\ve_0$ pulls back to $T_z$.  But $\int_{T_z}R_2\{f(z),g(z)\}$ is precisely $\mathrm{AJ}(Z)$ above, and so we conclude that $\Psi(\tfrac{1}{16})=16\ay L(\chi_{-4},2)$.  (Intuitively, what is happening here is that the nodal rational curve becomes another copy of $\bxr$, and so we ``go up in $K$-theory'' from $K_2$ to $K_3$.)  Conclude that
\begin{equation}\label{eId}
\log(16)-\sum_{m>0}\frac{\binom{2m}{m}^2}{16^m m}=\frac{8}{\pi}L(\chi_{-4},2).
\end{equation}
Related identities and complete details may be found in \cite[\S6]{DK}.

\subsection{Discussion}

A message common to \cite{DK,BKV,GKS,DKS} is that the use of higher cycles and the regulator calculus produces nontrivial number-theoretic identities with remarkable ease.  In this paper, we get even more power in the hypergeometric case by combining this with Frobenius deformations and differential properties of the regulator.

For example, an easy consequence of \eqref{ePsi} is that $\nabla_D [\tilde{R}_t]=2\pi\ay\mu_t$, where $\mu_t$ is the standard residue form with hypergeometric function as period. This is a special case of \cite[Cor.~4.1]{DK}, of which we will obtain a substantial generalization in Theorem \ref{t3a}.  In principle, this allows us to bypass constructing cycles and computing $R_Z$.

Identities such as \eqref{eId} result from evaluating periods of $R_t$ at points in the discriminant locus by specializing $\xi$ to motivic cohomology of a singular fiber and computing its regulator in another way.  In this paper, we want to specialize cycles like $\xi$ at values of $t\in \QQ$ \emph{not} in the discriminant locus.  If this specialization extends to an integral model $\fx_t$ of $X_t$, then the Beilinson conjectures make a prediction about its regulator.  More precisely, they make a prediction about the \emph{Beilinson regulator}, which (informally) is the determinant of the real regulator map obtained from projecting from $\ZZ$ to $\RR$ coefficients (see \S\ref{S4}).

In the present case, for example, if $t^{-1}$ is a nonzero integer then $\xi_t$ belongs to $\mathrm{CH}^2_{\ZZ}(E_t,2)$, the image of $K_2$ of the integral model in $\mathrm{CH}^2(E_t,2)$.  The prediction is then (taking $t^{-1}>16$) that (i) $\tfrac{1}{2\pi}\text{Im}(\Psi(t))$ (or equivalently, $r(t):=\log(t^{-1})-\sum \binom{2m}{m}^2\tfrac{t^m}{m}$) is a rational multiple of $L'(E_t,0)$ and (ii) $\mathrm{CH}^2_{\ZZ}(E_t,2)$ has rank one.  Since $E_t$ is modular, by Beilinson's results on modular curves \cite{Be2} we know the analogue of (i) for certain $K_2$-classes constructed from Siegel units.  However, since (ii) is completely unknown, the prediction (i) about $\Psi(t)$ does not follow from [op.~cit.].  In fact, a conjecture of Bloch and Kato \cite{BK} (not discussed in this paper) makes the identity of the rational constant rather precise.  Though we will have nothing to say about Bloch-Kato in this paper, numerical evidence for the above elliptic family is presented in the forthcoming work \cite{DGJK}.

The remainder of this paper is concerned with obtaining explicit expressions for the Beilinson regulator in families of higher-dimensional Calabi-Yaus.  However, there are a couple of useful pieces of motivation which we find valuable, and so our next stop is a detour through dimension zero.

\section{Motivation from number theory}\label{S1}

The Beilinson regulator has roots in Griffiths's Abel-Jacobi map (on the geometric side) and Borel's regulator (on the arithmetic side).  A special case of the latter is the Dirichlet regulator of a number field, with the class number formula as a special case of the Beilinson conjectures.  Since the raison d'\^etre of this paper is the interpolation of regulators for families of hypergeometric Calabi-Yau varieties, it is natural to begin at the beginning and ask:  how do you interpolate Dirichlet regulators in a CY family?

\subsection{Quadratic extensions}\label{SCY0}
Consider the quadratic equation 
\begin{equation}\label{equad}
x^2+(2-t^{-1})x+1=0,
\end{equation}
or (equivalently) $1-t\phi(x)=0$ with $\phi(x)=\tfrac{(x+1)^2}{x}$.  It defines a family $X_t=\{x_+(t),x_-(t)\}$ of ``CY 0-folds'' as $t$ runs over $U=\PP^1\setminus \{0,\tfrac{1}{4},\infty\}$.  As our family of cycles, we take $\{x\}|_{X_t}\in \mathrm{CH}^1(X_t\times \square_{\text{rel}})\cong \mathrm{CH}^1(X_t,1)\cong K_1(X_t)$, with integral regulator in $H^0(X_t,\CC/\ZZ(1))$ represented by the 0-current $\log(x)$.  Pairing it with the ``topological cycle'' $[x_+]-[x_-]$ gives
\begin{equation}\label{elog}
r(t):=\log x_+-\log x_-=-2\log x_- = 2\oint_{|x|=1}\log(t^{-1}-\phi)\frac{dx}{x}=-2\log(t)-2\sum_{k>0}\binom{2k}{k}\frac{t^k}{k}
\end{equation}
by Jensen's formula, the Cauchy integral formula, and the fact that the constant term in $\phi^k$ is $\binom{2k}{k}$.  

Actually, this family is hypergeometric:  its ``period'' is the pairing of $[x_+]-[x_-]$ with $\mathrm{Res}_{X_t}(\tfrac{dx/x}{1-t\phi})$, which gives $\sum_{k\geq 0} \binom{2k}{k}t^k=(1-4t)^{-1/2}={}_1F_0(\tfrac{1}{2};4t)$.

For $t=\tfrac{1}{n}$ (with $n>5$), the discriminant of \eqref{equad} is $n(n-4)$.  If we take this discriminant to be squarefree, then $x_-=\tfrac{1}{2}(n-2-\sqrt{n(n-4)})$ is a fundamental unit of $K_t:=\QQ(\sqrt{n(n-4)})$, making $\{x\}|_{X_t}\in K_1(X_t)$ an integral generator.  The Dirichlet class number formula gives
\begin{equation}\label{edir}
\frac{L'(\tilde{H}^0(X_t),0)}{r(t)}=\frac{-\zeta'_{K_t}(0)}{2r(t)}=h_{K_t},
\end{equation}
and the fact that the class number $h_{K_t}$ belongs to $\QQ^{\times}$ (of course, it is an integer) is a special case of Beilinson's Conjecture II.  Put differently, the analytic function $r(t)$ (say, on $(0,\tfrac{1}{4})$) can be thought of as interpolating the values $-\zeta'_{K_t}(0)/2h_{K_t}$ (with $t=\tfrac{1}{n}$ as above).

\subsection{Quintic extensions: two classes from a single family}\label{S2cl}
The last example was not so unlike the $K_2$ example in \S\ref{Sreg}.  In later sections our regulator volumes will be given by $2\times 2$ determinants.  Here is another ``$K_1$ of (non-CY) $0$-fold'' regulator which is analogous to the ``$K_4$ of CY $3$-fold'' one in \S\ref{S6}.

Lang quotes the following example due to Artin in \cite{Lang}.  Consider the field 
$$K_1=\QQ [Y]/ (Y^5-Y+1)  \simeq \QQ [x]/ ((x-1)^5-x) $$
Its discriminant equals the discriminant of the defining polynomial, $\Delta = 19 \cdot 151$, and the Galois group of the decomposition field is $S_5$. The
Minkowski bound for the additive lattice prescribes the existence of an ideal in any given class with norm 	
$$\mathbf{N} \mathfrak{p} \le  \frac{5!}{5^5} \left(\frac{4}{\pi}\right)^{r_2} \Delta^{1/2} \approx 3.34...$$ 

In a given class, take an ideal with the minimal norm. It cannot be 2 or 3 because then the ideal would have to be prime and the residue field extension would be of degree 1, but the defining polynomial has no roots in $\mathbb{F}_2$ or $\mathbb{F}_3$. Thus, the class number is 1 and, in order to compute the leading coefficient of $L(K_1,s)$ at $s=0$, we will need its regulator, which has to be the determinant of a rank 2 matrix,
as $K_1$ has 1 real and 2 pairs of complex embeddings.

For a pair of sets of hypergeometric indices $(\fa,\fb)$ of the same cardinality, normalized to have all $\fa_i,\fb_i\in (0,1]\cap \QQ$, set $\Alpha_i:=\Exp{2\pi\ay\fa_i}$ and $\lambda:=\exp{(\sum_i \psi(\fa_i)-\sum_i \psi(\fb_i))}$,
where $\psi (x)= \frac{\Gamma '(x)}{\Gamma (x)}$.
Writing
\begin{equation}\label{eV*}
\mathbf{\Gamma}(s)\,=\, \mathbf{\Gamma}_{\fa, \fb}(s) \,=\, {\mystrut \prod_{i=1}^n \Gamma(s  - \fa_i + 1)^{-1} \prod_{i=1}^n \Gamma (-s + \fb_i)^{-1}} \quad (s\in\CC),
\end{equation}
we define series
\begin{equation}\label{eV*'}
S_{\Alpha_j}(t) = \sum_{l=0}^\infty  \mathbf\Gamma(l + \fa_j)\, \frac{(\lambda t)^{l + \fa_j}}{l + \fa_j}
\end{equation}
and 
\begin{equation}\label{eV**}
S_n(t) = \frac{1}{\mathbf{\Gamma}(0)}\sum_{i=1}^4 {\mathrm{A}}_i^n S_{\Alpha_i}(t).
\end{equation}	

Choose now $$\mathbf{\Gamma}^{(0)} = \mathbf{\Gamma}_{(\frac{1}{10}, \frac{3}{10}, \frac{7}{10}, \frac{9}{10}),    (\frac{1}{4},\frac{1}{2},\frac{3}{4},1)}\;\;\text{and}\;\;\mathbf{\Gamma}^{(1)} = \mathbf{\Gamma}_{(\frac{1}{5},\frac{2}{5},\frac{3}{5},\frac{4}{5}),  (\frac{1}{4},\frac{1}{2},\frac{3}{4},1)},$$ and form the respective $S^{(J)}_n(t), \; J=0,1.$ One then finds the following hypergeometric representation for the $L$--derivative: 
$$ L''(K_1,0) =  \frac{25}{2} \mathop{\mathrm{det}} \mathop{\mathrm{Re}} \begin{pmatrix}
		S^{(0)}_1(256) & S^{(1)}_1(1) \\ 
		
		S^{(0)}_3(256) & S^{(1)}_3(1) 	
	\end{pmatrix}$$
	The reason for this is that the Puiseux series $\exp \left(\frac{1}{5}S^{(0)} \left({256}{t^2}\right)\right)$ is a root of $  (X-1)^5 + {16}{t} (X^3+X^2)$
	while 
	$\exp \left(S^{(1)} (t^2)\right)$ is a root of $(x-1)^5 -t^2x, $ 	
	and these polynomials define isomorphic extensions $K_t$ via  $\phi (x) = -((x-1)^3-tx+t/2)\cdot \frac{2}{t} \, :$
$$ \phi \left(\exp \left(S^{(1)} (t^2)\right)\right) = \exp \left(\frac{1}{5}S^{(0)} \left({256}{t^2}\right)\right).$$
Evaluating the four Puiseux series at $t=1$, we obtain 2 units in 2 embeddings of $K_1$ into $\CC$. 

\section{Motivation from differential equations}\label{S2}

Periods of families of algebraic varieties are governed by Picard-Fuchs equations.  Typically these are expressed in homogeneous form, with inhomogeneous equations then governing Abel-Jacobi periods arising from algebraic cycles on the family (see \S\ref{S3}).  Both kinds of equations play a fundamental role in mirror symmetry and mathematical physics, and it was this context which originally motivated the investigation of cycles on hypergeometric CY-families in this paper (see \S\ref{S5}).

The comparison between Frobenius and Betti solutions to a Picard-Fuchs equation in a neighborhood of a maximal unipotent monodromy point has been the subject of recent study in \cite{GZ,BV,Ke}.  It is closely related to both the Gamma conjectures and limiting mixed Hodge structures.  In the hypergeometric case, as we review in the examples below, pretty much everything can be put in closed form.  The long paragraph which follows introduces the Hodge-theoretic setting that will be in effect for the remainder of the paper.

We begin with an irreducible motivic polarized\footnote{We write $\langle\cdot,\cdot\rangle$ for the polarization pairing $\M\times \M\to \co$, $\int_{\cdot}(\cdot)$ for the duality pairing $\MM^{\vee}\times \M\to \co$ (as well as $\VV^{\vee}\times \V\to \co$), and $D=\nabla_D$ for the Gauss-Manin connection.} $\QQ$-VHS $\M$ over $U=\PP^1\setminus \Sigma$ (with coordinate $z$), subject to the following assumptions and notations:
\begin{itemize}[leftmargin=0.5cm]
\item the geometric origin of $\M$ is in a family defined over $\bar{\QQ}$;
\item $\M$ has weight $n$ with all Hodge numbers $h^{k,n-k}=1$ ($0\leq k\leq n$);
\item $\Sigma=\{0,c=c_1,c_2,\ldots,c_d,\infty\}$ with $|c|\leq |c_i|$ ($\forall i$);
\item monodromies of the underlying $\QQ$-local system $\MM$ and its dual are written $T_{\sigma}=T_{\sigma}^{\text{ss}}e^{N_{\sigma}}$ for each $\sigma\in \Sigma$;
\item $\MM$ has maximal unipotent monodromy at $0$, with dual ``vanishing cycle'' $\ve_0$, viz.~$(\MM_p^{\vee})^{T_0}=\QQ\langle \ve_0\rangle$ (for a fixed $p\in U$ close to $0$);
\item $\MM$ has conifold (i.e.~Picard-Lefschetz) monodromy at each $c_i$, and for $i=1$ we write $(T_c-I)\MM_p^{\vee}=\QQ\langle \delta\rangle$;
\item $\mu\in \Gamma(\PP^1,\F_e^n\M_e)$ is a holomorphic section of the canonically extended Hodge line, nonvanishing on $\PP^1\setminus\{\infty\}$, normalized so that $\lim_{z\to 0}\int_{\ve_0}\mu=1$; and
\item $L=\sum_{j=0}^d z^jP_j(D)=\sum_{i=0}^{n+1}q_{n+1-i}(z)D^i\in \CC[z,D]$ (where $D=z\tfrac{d}{dz}$) is an irreducible operator with $L\mu=0$, $\gcd(\{q_i\})=1$, $q_0(0)=1$.  Note that our assumptions (that $\deg(L)=d$ and $\M$ has arithmetic origin) imply that the Frobenius dual $L^{\dagger}=L$ and $L\in \bar{\QQ}[z,D]$.
\end{itemize}
By \cite[Lem.~4.2]{Ke}, we may uniquely extend $\ve_0$ to a basis of $\MM_p^{\vee}$ by $\{\ve_1,\ldots,\ve_n\}\subset (\MM_p^{\vee})^{T_c}$ with $N_0\ve_m=\ve_{m-1}$.  The resulting \emph{Betti periods} $\e_m:=\int_{\ve_m}\mu$ form a (multivalued) $\CC$-basis of solutions for $L(\cdot)=0$, with $\QQ$-monodromies. It is important that we have imposed conditions sufficient to make $\mu$, $L$, and $\{\ve_m\}$ unique; otherwise explicit determinations of (say) the function $\alpha(s)$ below would have no meaning.

Presently we are interested in solutions to the multivalued inhomogeneous equation
\begin{equation}\label{e2a}
L(\cdot)=z^{1/r}	
\end{equation}
which arose in the context of open mirror symmetry nearly two decades ago \cite{Wa}.  In this section we give a purely Hodge-theoretic solution, for which the following result is instrumental:

\begin{prop}[\cite{BV,Ke}]\label{p2a}
The restriction $\M|_{\Delta_0^{\times}}$ to a punctured disk about $z=0$ has a unique pro-unipotent extension to a VMHS $\V\to \Delta^{\times}$ satisfying:
\begin{itemize}[leftmargin=0.5cm]
\item $\V$ has nonzero Hodge-Deligne numbers $h^{-\ell,-\ell}=1$ \textup{(}$\ell>0$\textup{)} and $h^{k,n-k}=1$ \textup{(}$0\leq k\leq n$\textup{)};
\item $\V$ has a surjective morphism to $\M$, and a section $\Omega\in \Gamma(\PP^1,\F^n_e\V_e)$ mapping to $\mu$, with $D^{\infty}L\Omega=0$;
\item the underlying $\QQ$-local system $\VV$ is closed under $T_c$; and
\item $\VV_p^{\vee}$ has a basis $\{\ve_m\}_{m\geq 0}$ with $T_c\ve_m=\ve_m$ and $T_0\ve_m=\ve_{m-1}$ for $m>0$.
\end{itemize}
\end{prop}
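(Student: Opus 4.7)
The plan is to construct $\V$ from the Frobenius deformation of $L$ at $z=0$, reading off the local system $\VV$ and the lift $\Omega$ of $\mu$ explicitly, and then to verify the four listed properties. Under the MUM hypothesis together with $h^{k,n-k}=1$, the limit MHS $\psi_0\M$ is Hodge--Tate of type $(0,0)+(1,1)+\cdots+(n,n)$ with $N_0$ acting as a cyclic shift by one; the pro-unipotent extension $\V$ should have $\psi_0\V = \bigoplus_{\ell\le n}\QQ(-\ell)$ with $N_0$ extended to an infinite shift. Accordingly, I would build $\V$ as the inverse limit $\varprojlim_N \V^{(N)}$ of successive extensions of $\M$ by Tate twists $\QQ(\ell)$ (one for each $\ell\ge 1$), with the extension class at stage $N$ prescribed by the requirement $N_0\ve_{N+1}=\ve_N$.

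Concretely, write the Frobenius deformation $\Phi(s,z)=z^s(1+O(z))$ as the unique multivalued solution of $L\Phi(s,z)=I(s)z^s$, where $I(s)$ is the indicial polynomial of $L$ at $0$. The MUM hypothesis gives $s^{n+1}\mid I(s)$, so $\pi_m(z):=\tfrac{1}{m!}\partial_s^m\Phi(s,z)|_{s=0}$ solves $L(\cdot)=0$ for $0\le m\le n$ (matching the Betti periods $\e_m$) and for $m>n$ satisfies $L\pi_m\in\bar{\QQ}[\log z]$. I would take $\Omega$ to be the canonical lift of $\mu$ to $\F^n_e\V_e$, which as a line bundle coincides with $\F^n_e\M_e$ and hence extends to $\PP^1$, and define the dual basis $\ve_m\in\VV_p^\vee$ by $\int_{\ve_m}\Omega=\pi_m$. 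The monodromy of $\pi_m$ around $0$ immediately gives $T_0\ve_m=\ve_{m-1}$, and the polynomial-in-$\log z$ property gives $D^\infty L\Omega=0$; the Hodge--Deligne numbers follow from the induced weight filtration on the iterated extension.

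For the $T_c$-closure of $\VV$ and the identity $T_c\ve_m=\ve_m$ for $m\ge 1$, I would argue by induction on $m$. The base cases $0\le m\le n$ are given by \cite[Lem.~4.2]{Ke}. For $m>n$, the new class $\ve_m$ is determined by $\ve_{m-1}$ through $N_0\ve_m=\ve_{m-1}$ up to an element of $\ker N_0=\QQ\langle\ve_0\rangle$; this remaining freedom is killed by requiring $\langle\ve_m,\delta\rangle=0$, which by the conifold hypothesis $(T_c-I)\MM_p^\vee=\QQ\langle\delta\rangle$ is equivalent to $T_c\ve_m=\ve_m$.

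The hardest step will be uniqueness of the whole package $(\V,\Omega,\{\ve_m\})$. One must show that the combined constraints --- pro-unipotency, the prescribed Hodge--Deligne numbers, $T_c$-closure, and existence of a holomorphic section $\Omega$ with $D^\infty L\Omega=0$ --- leave no residual freedom. I expect this to reduce, at each stage of the tower, to the vanishing of an $\mathrm{Ext}^1$ group in the category of $T_c$-closed pro-unipotent VMHS on $\Delta^\times$, with the vanishing forced by the normalization of $L\Omega$ and the intrinsic uniqueness of the Frobenius deformation itself. This is essentially the content of the cited results of \cite{BV,Ke} once translated to the pro-unipotent setting, and once that translation is in place the proof amounts to bookkeeping in the explicit series presentation.
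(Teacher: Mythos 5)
Your skeleton --- realize $\V$ as a pro-unipotent tower of extensions of $\M$ by Tate objects $\QQ(\ell)$, read off periods from the Frobenius deformation, pin down the new classes inductively via $N_0\ve_m=\ve_{m-1}$ plus $T_c$-invariance, and get uniqueness from rigidity of the extension data --- is the right shape and is essentially how the cited sources \cite{BV,Ke} proceed (the paper itself gives no proof beyond the citation). But there is a genuine error at the center of your construction: you define the basis $\{\ve_m\}$ twice, in incompatible ways. In your second paragraph you set $\int_{\ve_m}\Omega=\pi_m:=\tfrac{1}{m!}\partial_s^m\Phi(s,z)|_{s=0}$, i.e.\ you take the \emph{Frobenius} basis, characterized by the asymptotics $\pi_m-\tfrac{1}{m!}\log^m z\to 0$. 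In your third paragraph you characterize $\ve_m$ by $N_0\ve_m=\ve_{m-1}$ and $\langle\ve_m,\delta\rangle=0$, i.e.\ the \emph{Betti} ($T_c$-invariant) basis. These are different: their generating series are related by $\bE(s,z)=\alpha(s)\Phi(s,z)$, where $\alpha(s)=1+\sum_{m\geq 1}\alpha_m s^m$ is the series of Frobenius constants appearing in \eqref{e2d}, and already $\alpha_1=\sum_j\bigl(\psi(\fa_j)-\psi(1)\bigr)\neq 0$ in the generic hypergeometric case. Consequently the basis with periods $\pi_m$ is \emph{not} $T_c$-invariant --- your parenthetical ``matching the Betti periods $\e_m$'' fails already for $m=1$ --- and the step ``this remaining freedom is killed by requiring $\langle\ve_m,\delta\rangle=0$'' silently swaps one basis for the other. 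The nontriviality of this change of basis (the gamma-values $\alpha_m$) is the main theorem of \cite{BV}, so it cannot be elided: a correct proof must construct the $T_c$-invariant basis directly (the periods $\e_m$ are then \emph{defined} by $\int_{\ve_m}\Omega$, and the Frobenius deformation is recovered only after dividing by $\alpha(s)$), rather than impose both normalizations at once.

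Two smaller points. First, the $T_c$-closure of $\VV$ is not just the statement that each $\ve_m$ ($m\geq 1$) is fixed; one must also show $(T_c-I)\ve_0$ lands in $\QQ\langle\delta\rangle\subset\VV_p^{\vee}$ and that this is consistent with the MHS data, which is where the normalization $(T_c-I)\bE(s,z)=\int_{\delta}\mu$ (constant in $s$) does real work --- your induction produces the monodromy constraint but does not verify that the resulting $\QQ$-structure supports the weight and Hodge filtrations of an admissible variation. Second, ``the Hodge--Deligne numbers follow from the induced weight filtration'' and the concluding uniqueness paragraph are deferrals rather than arguments; that is acceptable for a cited result, but the $\mathrm{Ext}^1$-vanishing you invoke should be replaced by the observation that at each finite stage the extension class is determined by the prescribed $N_0$-action together with the condition $D^{\infty}L\Omega=0$, which is how \cite{Ke} actually closes the induction.
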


We encode the Betti periods $\e_m:=\int_{\ve_m}\Omega$ of $\V$ in the generating series
\begin{equation}\label{e2b}
\begin{split}
\bE(s,z)&:=\phantom{:}\textstyle\sum_{m\geq 0}(2\pi\ay)^m\e_m(z)s^m \\
&\phantom{:}=:\,\textstyle\sum_{k\geq 0}a_k(s)z^{k+s},
\end{split}
\end{equation}
which satisfies
\begin{equation}\label{e2c}
\left\{
\begin{aligned}
(T_c-I)\bE(s,z)&=\textstyle\int_{\delta}\mu \\
\text{and }\;T_0\bE(s,z)&=e^{2\pi\ay s}\bE(s,z)
\end{aligned}
\right.	
\end{equation}
by the last bullet of Proposition \ref{p2a}.  By \cite[Thm.~6.3]{Ke}, the power-series coefficients of $\alpha(s)=1+\sum_m \alpha_m s^m$ defined by
\begin{equation}\label{e2d}
L\bE(s,z)=:s^{n+1}\alpha(s)z^s
\end{equation}
give (the first column of) the period matrix of the limiting MHS $\psi_z \V$ at the origin.

\begin{example}\label{ex2a}
The hypergeometric case can (given the assumptions already made) be characterized equivalently by the conditions
\begin{itemize}[leftmargin=0.5cm]
\item $d=1$ (and $\Sigma=\{0,1,\infty\}$)
\end{itemize}
or
\begin{itemize}[leftmargin=0.5cm]
\item $L=D^{n+1}-z\prod_{j=1}^{n+1}(D+\fa_j)$ (where $\{\fa_j\}=\{1-\fa_j\}$).
\end{itemize}
The $\{e^{2\pi\ay\fa_j}\}$ are then the eigenvalues of $T_{\infty}$; repeated eigenvalues produce quasi-unipotent ``$N_{\infty}$-strings''.  The holomorphic period is 
\begin{equation}\label{e2e}
\e_0(z)=\sum_{k\geq 0}a_k(0)z^k=\sum_{k\geq 0}\prod_{j=1}^{n+1}\frac{\Gamma(k+\fa_j)}{\Gamma(\fa_j)\Gamma(k+1)}z^k=\sum_{k\geq 0}\frac{\prod_{j=1}^{n+1}[\fa_j]_k}{(k!)^{n+1}}z^k,
\end{equation}
while by \cite[Ex.~6.8]{Ke} we have
\begin{equation}\label{e2f}
a_k(s)=\prod_{j=1}^{n+1}\frac{\Gamma(k+s+\fa_j)}{\Gamma(\fa_j)\Gamma(k+s+1)}\;\;\text{ and }\;\;\alpha(s)=\prod_{j=1}^{n+1}\frac{\Gamma(s+\fa_j)}{\Gamma(\fa_j)\Gamma(s+1)}.
\end{equation}
From the form of $a_k(s)$, we see that \emph{the classical ``Frobenius method''\footnote{That is, the standard trick of differentiating a series $\sum_k a_k z^k$ with respect to the summation parameter $k$, provided a reasonable interpolation of the $\{a_k\}$ is available.  See \cite[Appendix A]{Ke}.} gives Betti periods} in the hypergeometric case.
\end{example}

Next define formally
\begin{equation}\label{e2g}
\frac{\bE(s,z)}{\alpha(s)}=:\Phi(s,z)=:\sum_{m\geq 0}\phi_m(z)s^m=\sum_{k\geq 0}\frac{a_k(s)}{\alpha(s)}z^{k+s},
\end{equation}
the generating series of \emph{Frobenius periods}.\footnote{It turns out that all of our ``formal'' functions of $(s,z)$ are actually analytic on $\CC\times \widetilde{U}^{\text{un}}$, cf.~\cite[\S5]{Ke}.} One can uniquely characterize the periods (as solutions to $L(\cdot)=0$) by the asymptotic property $\phi_m(z)-\tfrac{1}{m!}\log^mz\to 0$ with $z\to 0$; the equations
\begin{equation}\label{e2h}
\left\{
\begin{aligned}
L\Phi(s,z)&=s^{n+1}z^s\\
T_0\Phi(s,z)&=e^{2\pi\ay s}\Phi(s,z),
\end{aligned}
\right.
\end{equation}
uniquely characterize the generating series \cite{GZ,BV}.  Now by the second line of \eqref{e2h}, we may consider the function
\begin{equation}\label{e2i}
\W_r(z):=\textstyle r^{n+1}\Phi(\tfrac{1}{r},z)
\end{equation}
as an analytic function of $z^{1/r}$ in a neighborhood of the origin.

\begin{prop}\label{p2b}
$\W_r$ is the unique solution of \eqref{e2a} with $\W_r(0)=0$.
\end{prop}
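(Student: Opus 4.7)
The plan is to deduce the proposition directly from the two identities in \eqref{e2h}, using the maximally unipotent monodromy of $L$ at $z=0$ to handle uniqueness.

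For existence, I would simply specialize the first line of \eqref{e2h} at $s=\tfrac{1}{r}$: this gives $L\Phi(\tfrac{1}{r},z)=r^{-(n+1)}z^{1/r}$, so multiplying by $r^{n+1}$ produces $L\W_r=z^{1/r}$ at once. The initial condition follows from the factorization $\Phi(\tfrac{1}{r},z)=z^{1/r}\sum_{k\ge 0}\tfrac{a_k(1/r)}{\alpha(1/r)}z^k$ read off from \eqref{e2g}: the overall factor $z^{1/r}$ forces every branch of $\W_r$ to tend to $0$ as $z\to 0$.

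For uniqueness, I would first record that the solution space of $L(\cdot)=0$ is $(n+1)$-dimensional and is spanned by the Frobenius sections $\phi_0,\ldots,\phi_n$ from \eqref{e2g}. Indeed, expanding the identity $L\Phi(s,z)=s^{n+1}z^s$ in powers of $s$ and matching coefficients gives $L\phi_m=0$ for $0\le m\le n$, while their linear independence is pinned down by the asymptotic $\phi_m(z)-\tfrac{1}{m!}\log^m z\to 0$ recorded just before \eqref{e2h}. Any other solution $\W$ of \eqref{e2a} therefore differs from $\W_r$ by some combination $\sum_{m=0}^n c_m\phi_m$, and the hypothesis $\W(0)=\W_r(0)=0$ on a chosen branch of $\log z$ forces this combination to vanish as $z\to 0$. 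The leading log contribution $c_n\tfrac{\log^n z}{n!}$ must then be zero, giving $c_n=0$; induction on the order of the logarithmic singularity yields $c_{n-1}=\cdots=c_0=0$, whence $\W=\W_r$.

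There is no genuine analytic obstacle here: the content of the proposition is the observation that the multivalued inhomogeneous equation \eqref{e2a} is exactly the $s=\tfrac{1}{r}$ specialization of the second identity in \eqref{e2h}, packaged so that uniqueness reduces to the standard MUM statement that no nonzero linear combination of $\phi_0,\ldots,\phi_n$ can tend to $0$ at the origin. The only mild point worth stating carefully is that $\W_r(z)=0$ is interpreted in the uniform sense of every branch, which is automatic from the $z^{1/r}$ prefactor.
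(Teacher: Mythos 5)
Your proposal is correct and follows essentially the same route as the paper: existence by specializing the first line of \eqref{e2h} at $s=\tfrac{1}{r}$, and uniqueness by observing that a homogeneous solution added to $\W_r$ must vanish at the origin, which the MUM/Frobenius asymptotics forbid unless it is zero. The paper's uniqueness step is just a compressed version of yours --- it notes that the general local analytic (in $z^{1/r}$) solution is $\W_r+C\e_0$ with value $C$ at $0$, which is exactly what your elimination of the $\log^m z$ terms plus the normalization $\e_0(0)=1$ establishes.
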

\begin{proof}
That $\W_r$ solves \eqref{e2a} is immediate from the first line of \eqref{e2h}.  For uniqueness, the general local analytic solution is clearly $\W_r(z)+C\e_0(z)$ ($C\in \CC$), which has value $C$ at $0$.
\end{proof}

\begin{example}\label{ex2b}
In the hypergeometric case, we have 
\begin{equation}\label{e2j}
\W_r(z)=\sum_{k\geq 0}\prod_{j=1}^{n+1}\frac{[\fa_j+\frac{1}{r}]_k}{[\frac{1}{r}]_{k+1}}z^{k+\frac{1}{r}}.
\end{equation}
\end{example}


\section{Higher normal functions}\label{S3}

Just as variations of Hodge structure (or equivalently, period maps) are attached to families of smooth projective varieties, normal functions are the abstract Hodge-theoretic gadget associated to families of algebraic cycles thereon.  In this section we review several definitions and properties needed in the remainder of the paper.

Let $\H\to U$ be a VHS of pure weight $w<0$.  It is convenient to assume that $\H$ has trivial fixed part; that is, the underlying $\QQ$-local system $\HH$ has $H^0(U,\HH)=\{0\}$.

\begin{defn}
A \emph{normal function} on $U$ with values in $\H$ is
\begin{itemize}[leftmargin=0.5cm]
\item a holomorphic, horizontal section $\nu$ of $\J(\H):=\H/(\F^0\H+\HH)$.
\end{itemize}
or equivalently
\begin{itemize}[leftmargin=0.5cm]
\item a variation of MHS $\E_{\nu}$ which is an extension of the form $$0\to \H\to \E_{\nu}\to \QQ(0)\to 0.$$
\end{itemize}
\end{defn}
One recovers the (global) section from the extension by taking differences of local lifts of $1\in \QQ(0)$ to $\F^0\E_{\nu}$ and to $\EE_{\nu}$ ($=$ the underlying $\QQ$-local system).  A normal function is called \emph{classical} if $w=-1$ and \emph{higher} if $w<-1$, since the first type arises from families of algebraic cycles in the ``classical'' sense (i.e.~$K_0^{\text{alg}}$) while higher normal functions arise from families of higher Chow groups (viz., $K_{-w-1}^{\text{alg}}$).

It will be useful to adopt the shorthand
$$\mathrm{Hg}^p(V):=\mathrm{Hom}_{\text{MHS}}(\QQ(-p),V)\cong\mathrm{Hom}_{\text{MHS}}(\QQ,V(p))$$
for Hodge $(p,p)$ classes in a MHS $V$.  In \S\ref{S4} we shall also make use of 
$$J^p(V):=\mathrm{Ext}^1_{\text{MHS}}(\QQ(-p),V)\cong W_{2p}V_{\CC}/(F^pW_{2p}V_{\CC}+V_{\QQ}).$$

\begin{defn}
A normal function on $U$ is \emph{admissible} (with respect to $\PP^1$) if the limit MHSs\footnote{This is a convenient abuse of notation; it stands for $\psi_{z-\sigma}$ (for $\sigma\neq \infty$) and $\psi_{\frac{1}{z}}$ (for $\sigma=\infty$), i.e.~the nearby-cycles space with the limit MHS layered onto it.} $\{\psi_{\sigma}\E_{\nu}\}_{\sigma\in \Sigma}$ exist; we write $\nu\in \mathrm{ANF}_U(\H)$.  In this case $\nu$ acquires \emph{singularity} invariants\footnote{Here $(W)_{T}$ denotes the $T$-coinvariants $W/(T-I)W$. For $T=e^N$ unipotent, this is the same as $W/NW$.}
\begin{equation}\label{e30}
\text{sing}_{\sigma}(\nu)\in \mathrm{Hg}^0((\psi_{\sigma}\H)_{T_{\sigma}}(-1))
\end{equation}
by applying $N_{\sigma}$ to local lifts of $1$ to the canonical extensions of $\E_{\nu}$, cf.~\cite[\S2.11]{KP}.  For any $U_0\supseteq U$, we set
\begin{equation}\label{e3a}
\mathrm{ANF}_{U_0}(\H):=\{\nu\in \mathrm{ANF}_U(\H)\mid \text{sing}_{\sigma}(\nu)=0\;\forall \sigma\in U_0\cap \Sigma\};
\end{equation}
that is, a $\nu\in \mathrm{ANF}_{U_0}(\H)$ is allowed to be singular only on $\Sigma_0:=\PP^1\setminus U_0$.
\end{defn}

\begin{properties}
We briefly describe some results that aid in computing the abelian groups \eqref{e3a}.  Taking the class of a normal function yields identifications\footnote{To be clear, $\IH^1(U_0,\H)$ is a (Zucker/Saito) mixed Hodge structure with underlying $\QQ$-vector space $\IH^1(U_0,\HH)=H^1(U_0,(j_0)_*\HH)$ where $j_0\colon U\to U_0$ is the inclusion.}
\begin{equation}\label{e3b}
[\,\cdot\,]\colon \mathrm{ANF}_{U_0}(\H)\overset{\cong}{\to}\mathrm{Hg}^0(\IH^1(U_0,\H))
\end{equation}
with Hodge classes in ``partially parabolic'' cohomology, see \cite[\S4]{GKS}.  Since the parabolic cohomology $\IH^1(\PP^1,\H)$ is pure of weight $-w$, this implies at once that
\begin{equation}\label{e3c}
w<-1\;\;\implies\;\;\mathrm{ANF}_{\PP^1}(\H)=\{0\}
\end{equation}
so that \emph{higher} normal functions are defined by their singularities.  On the other hand, the triviality of RHS\eqref{e30} in the classical case yields
\begin{equation}\label{e3d}
w=-1\;\;\implies\;\;\mathrm{ANF}_U(\H)=\mathrm{ANF}_{\PP^1}(\H).
\end{equation}
For computing rank of $\IH^1(U_0,\H)$, we can use Euler-Poincare:  denoting the ``drop'' at $\sigma$ by $\delta_{\sigma}:=\mathrm{rk}(T_{\sigma}-I)$,
\begin{equation}\label{e3e}
\textstyle\mathrm{ih}^1(U_0,\H):=\sum_{\sigma\in U_0\cap \Sigma}\delta_{\sigma}-(\mathrm{rk}\,\H)\cdot \chi_{U_0}.
\end{equation}
Finally, one has exact sequences
\begin{equation}\label{e3f}
0\to \IH^1(\PP^1,\H)\to \IH^1(U_0,\H)\to \oplus_{\sigma\in \Sigma_0}(\psi_{\sigma}\H)_{T_\sigma}(-1)\to 0
\end{equation}
that aid in computing $\IH^1(U_0,\H)$ as a MHS.
\end{properties}

\subsection{Normal functions and differential equations}

Now take $\H=\M(p)$, and suppose there is a \emph{nonzero} 
\begin{equation}\label{e3g}
\nu\in \mathrm{ANF}_{\GG_m}(\M(p))\cong \mathrm{Hg}^p(\IH^1(\GG_m,\M))
\end{equation}
for some $p\in [\tfrac{n+1}{2},n+1]\cap \ZZ$.  Let $\tilde{\nu}:=\nu_{\QQ}-\nu_F$ be a multivalued lift (from $\J(\M(p))=\M/(\F^p\M+\MM(p))$) to $\M$; and consider the multivalued holomorphic function
\begin{equation}\label{e3h}
V:=\langle \tilde{\nu}(t),\mu(t)\rangle.
\end{equation}
Writing $\F_e^n\M_e\cong \co_{\PP^1}(h)$, the differential equations satisfied by normal functions are described by

\begin{prop}[{\cite[Thm.~5.1]{GKS}}]\label{p3a}
The ``inhomogeneous term'' 
\begin{equation}\label{eINHOM}
LV=:f
\end{equation}
is a nonzero polynomial of degree $\leq d-h$.  If $\textup{sing}_0(\nu)=0$, then $t\mid f$.
\end{prop}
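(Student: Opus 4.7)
I would prove Proposition \ref{p3a} in four steps: single-valuedness of $f$; holomorphy of $f$ on $\PP^1\setminus\{\infty\}$; the degree bound at $\infty$; and nonvanishing, together with divisibility by $t$ under the singularity hypothesis.

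\textbf{Single-valuedness.} The multi-valuedness of $V=\langle\tilde\nu,\mu\rangle$ comes only from the Betti lift $\nu_\QQ$: under monodromy $T_\sigma$, $\tilde\nu$ shifts by $(T_\sigma-I)\nu_\QQ\in\MM_\QQ(p)$, so $V$ shifts by a $\QQ$-linear combination of Betti periods of $\mu$. Each such period satisfies $L(\cdot)=0$, since the polarization is flat and $L\mu=0$ as a Gauss--Manin section; hence $LV=f$ is monodromy-invariant, and so single-valued on $U$.

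\textbf{Holomorphy on $\PP^1\setminus\{\infty\}$ and the degree bound.} At each $\sigma\in\Sigma$, admissibility of $\nu$ provides a limit MHS on $\E_\nu$ and bounds the growth of $\tilde\nu$ --- and hence of $V$ --- by polynomials in $\log|t-\sigma|$. The log-powers originate from $\nu_\QQ$ and, after pairing with $\mu$, produce periods of $\mu$, which are killed by $L$. What remains is a contribution from $\nu_F$ (which extends to the canonical extension at each $c_i$ and at $0$, since $\nu$ is nonsingular on $\GG_m$) paired with the canonically extended $\mu$; both are regular on $\PP^1\setminus\{\infty\}$, so combined with single-valuedness $f$ is a polynomial. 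Near $\infty$, the same analysis controls the pole order: $L$ contributes $t$-degree $d$, while $\mu$, as a generator of $\F_e^n\M_e\cong\co_{\PP^1}(h)$, has a pole of order $h$ at $\infty$ in the naive extension of $\F^n\M$, yielding $\deg f\leq d-h$. The most delicate point --- and where I expect the main obstacle to lie --- is verifying that the monodromy log-powers at $\infty$ cancel inside $LV$ before the polar order can be read off.

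\textbf{Nonvanishing and divisibility by $t$.} For nonvanishing, the identification \eqref{e3b} sends $\nu$ to a nonzero class in $\mathrm{Hg}^0(\IH^1(U_0,\M(p)))$; this class is in turn detected by $f$ via the normal-function / inhomogeneous-PF dictionary of GKS, so $\nu\ne 0$ forces $f\ne 0$. For the final claim, if $\mathrm{sing}_0(\nu)=0$ then $\nu$ extends to the canonical extension of $\J(\M(p))$ at $0$, so the lift $\tilde\nu$ may be taken holomorphic at $0$ as a section of $\M_e$; then $V(t)=V_0+V_1t+\cdots$ is holomorphic at $0$ and $(D^jV)(0)=0$ for every $j\geq 1$. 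The MUM condition forces $q_{n+1-i}(0)=0$ for $i<n+1$, and consequently $f(0)=\sum_i q_{n+1-i}(0)(D^iV)(0)=0$, i.e.\ $t\mid f$.
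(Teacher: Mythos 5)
The paper itself gives no proof of this proposition --- it is imported verbatim from \cite[Thm.~5.1]{GKS} --- so your attempt can only be measured against the standard argument there, and most of your outline is indeed that argument. Single-valuedness is right: monodromy changes $\tilde\nu$ by a flat section of $\MM(p)$, whose pairing with $\mu$ is a period and hence killed by $L$. Holomorphy at the finite punctures is right in substance: admissibility gives $V=O(|t-\sigma|^{-\epsilon})$ for every $\epsilon>0$, and a single-valued function with such growth extends. For the degree bound you do not actually need any delicate ``cancellation of log-powers at $\infty$'': once $f$ is known to be entire, the crude estimate $V=O(|t|^{-h+\epsilon})$ gives $f=LV=O(|t|^{d-h+\epsilon})$ and Liouville does the rest. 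That estimate comes from the fact that $\mu$, a global section of $\co_{\PP^1}(h)$ nonvanishing on $\CC$, has a \emph{zero} --- not a pole --- of order $h$ at $\infty$ relative to the canonical extension, against which $\tilde\nu$ has at worst logarithmic growth; taken literally, your ``pole of order $h$'' would yield the useless bound $d+h$. The divisibility $t\mid f$ is essentially correct, with the caveat that $\mathrm{sing}_0(\nu)=0$ only lets you make $\tilde\nu$ single-valued at $0$ after correcting by a flat section $\lambda$; since $L\langle\lambda,\mu\rangle=0$ this is harmless, and then $f(0)=q_0(0)(D^{n+1}V)(0)=0$ exactly as you compute.

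The genuine gap is the nonvanishing of $f$. ``This class is detected by $f$ via the normal-function / inhomogeneous-PF dictionary'' is circular: that dictionary is precisely what this proposition (together with Theorem \ref{t3a}) establishes. What must be ruled out is $LV=0$. Since $L$ is irreducible of order $n+1=\mathrm{rk}\,\M$ and the periods $\langle\lambda,\mu\rangle$ ($\lambda$ flat) already exhaust its solution space, $f=0$ would force $V=\langle\lambda,\mu\rangle$ for some flat $\lambda$; taking the lift normalized so that $D\tilde\nu\in\F^n\M$ (as in the proof of Theorem \ref{t3a}) and differentiating repeatedly, type considerations give $\langle\tilde\nu-\lambda,D^k\mu\rangle=0$ for all $k$, whence $\tilde\nu=\lambda$ is flat because $\mu,D\mu,\ldots,D^n\mu$ frame $\M$ and the polarization is nondegenerate. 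A flat lift kills the topological class of $\nu$ in $H^1(U,\MM_\CC(p))$, hence in $H^1(U,\MM_\QQ(p))$, contradicting $\nu\neq 0$ via the injectivity of \eqref{e3b}. Some argument of this shape is required; as written, the nonvanishing is assumed rather than proved.
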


We supplement this with a new result which drives home the importance of the inhomogeneity \eqref{eINHOM}.  Under our running assumptions (cf.~the beginning of \S\ref{S2}), note that by \cite[Prop.~7.1]{Ke} the Yukawa coupling $Y:=\langle \mu,D^n\mu\rangle$ satisfies \begin{equation}\label{e1}
Y=q_0^{-1}(2\pi\ay)^{-n}\mathsf{Q}_0^{-1},
\end{equation}
where $\mathsf{Q}_0=\langle \varepsilon_0,\varepsilon_n\rangle\in \QQ^{\times}$. 

\begin{thm}\label{t3a}
There exists a choice of lift $\tilde{\nu}$ \textup{(}unique up to $\MM(p)$\textup{)} such that 
\begin{equation}\label{eINHOM2}
D\tilde{\nu}=\mathsf{Q}_0(2\pi\ay)^nf\mu,
\end{equation}  
with $f$ as in \eqref{eINHOM}.  Moreover, if $\nu$ comes from a cycle as in \eqref{e4b} below, then $f\in \qb[t]$.\footnote{The statement on field of definition of $g$ is very similar to Defn.~3.1 and Thm.~4.1 of \cite{dAM}.  However, their claim (which replaces $\mu$ in the definition of $V$ by $\omega=(2\pi\ay)^n\mu$) is not proved and is incorrect as stated, as can be seen from the examples in \cite{GKS}.}
\end{thm}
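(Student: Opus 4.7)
My plan is to split the proof into two main parts. First, I would establish the existence and uniqueness (modulo $\MM(p)$) of a lift $\tilde\nu$ with $D\tilde\nu$ proportional to $\mu$, by exploiting the cyclic structure of $\M$ coming from the $h^{k,n-k}=1$ hypothesis. Second, I would compute the proportionality constant by applying $L$ to $V=\langle\tilde\nu,\mu\rangle$ and using the Yukawa formula \eqref{e1}; the rationality of $f$ will then follow by tracking the $\qb$-structure through this argument.

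For existence, under the Hodge-number-$1$ hypothesis the iterated derivatives $\mu,D\mu,\ldots,D^n\mu$ generically form a basis of $\M$ with $D^k\mu$ nonzero in $\mathrm{Gr}^{n-k}_F\M$, so $F^{n-k}\M=\mathrm{span}_{\co_U}\{\mu,D\mu,\ldots,D^k\mu\}$ on the Yukawa-nonvanishing locus. Starting from any lift $\tilde\nu_0=\nu_\QQ-\nu_F$ with $\nu_F\in F^p\M$, Griffiths transversality places $D\tilde\nu_0=-D\nu_F$ in $F^{p-1}\M$. Expanding $D\tilde\nu_0=\sum_{j=0}^{n-p+1}c_j(t)D^j\mu$ and seeking $\beta=\sum_{j=0}^{n-p}b_j(t)D^j\mu\in F^p\M$ to cancel the components $c_1,\ldots,c_{n-p+1}$ of $D(\tilde\nu_0-\beta)$ yields a triangular system ($b_{n-p}=c_{n-p+1}$, $b_{n-p-1}=c_{n-p}-b_{n-p}'$, and so on), trivially solvable top-down. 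The adjusted lift $\tilde\nu:=\tilde\nu_0-\beta$ satisfies $D\tilde\nu=C\mu$ for some function $C$, and the same triangular analysis shows that no nonzero $\beta\in F^p\M$ has $D\beta\in F^n\M$, giving uniqueness modulo $\MM(p)$. (When $p=n+1$ the step is vacuous, since $F^p\M=0$ and $D\tilde\nu\in F^n\M$ holds automatically by transversality.)

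For the proportionality, set $V=\langle\tilde\nu,\mu\rangle$. Combining the polarization vanishings $\langle F^a,F^b\rangle=0$ for $a+b>n$ with the descent identity $\langle D^a\mu,D^{n-a}\mu\rangle=(-1)^a Y$ (obtained by repeatedly applying $D$ to $\langle\mu,D^n\mu\rangle=Y$ and using vanishings for $a+b<n$), one sees the Leibniz expansion of $D^k V$ collapses. For $0\le k\le n$, expanding $D^j\tilde\nu=D^{j-1}(C\mu)=\sum_i\binom{j-1}{i}(D^iC)D^{j-1-i}\mu$ makes every cross-pairing vanish by filtration count, giving $D^k V=\langle\tilde\nu,D^k\mu\rangle$; at the top level, the only surviving contributions (the $i=0$ terms) assemble into
$$D^{n+1}V=\langle\tilde\nu,D^{n+1}\mu\rangle+CY\sum_{j=1}^{n+1}(-1)^{j-1}\binom{n+1}{j}=\langle\tilde\nu,D^{n+1}\mu\rangle+CY,$$
the binomial sum equaling $1$. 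Applying $L=\sum_i q_{n+1-i}(t) D^i$ and using $L\mu=0$ then gives $f=LV=q_0 CY$, and inserting $Y=q_0^{-1}(2\pi\ay)^{-n}\mathsf{Q}_0^{-1}$ from \eqref{e1} yields $C=\mathsf{Q}_0(2\pi\ay)^n f$.

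For rationality when $\nu$ comes from a $\qb$-rational cycle, I would choose $\nu_F$ as a $\qb$-rational section of $F^p\M$ (possible because the de Rham realization of the cycle extension is defined over $\qb$ and $F^\bullet\M$ descends to $\qb$). Flatness of $\nu_\QQ$ forces $L\langle\nu_\QQ,\mu\rangle=\langle\nu_\QQ,L\mu\rangle=0$, so $f=-L\langle\nu_F,\mu\rangle\in\qb(t)$, which is a polynomial by Proposition~\ref{p3a}. The main obstacle is the Step-2 bookkeeping: after expanding $D^j\tilde\nu$ via Leibniz on $C\mu$, one must verify that exactly the $i=0$ terms of the resulting double sum survive the polarization vanishings, and that the binomial identity produces the clean factor $CY$ rather than some spurious combination. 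A secondary subtlety is ensuring that the triangular adjustment $\beta$ in Step~1 preserves $\qb$-rationality, which follows because its coefficients $b_j$ are polynomial expressions in the $c_j$ and their iterated $D$-derivatives.
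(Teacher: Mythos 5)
Your first two steps are correct and essentially reproduce the paper's argument. The existence/uniqueness of the normalized lift is done in the paper by inductively using surjectivity of $\bar{\nabla}_D\colon \gr_{\F}^{k+1}\M\to\gr_{\F}^k\M$ on the rank-one graded pieces; your triangular system in the basis $\mu,D\mu,\ldots,D^n\mu$ is the same argument written in coordinates, and your observation that $\langle \beta_F,\mu\rangle=0$ for $\beta_F\in F^p\M$ (so that $f$ is independent of the Step-1 adjustment) is a point worth making explicit. For the proportionality constant, the paper collapses $L\langle\tilde\nu,\mu\rangle$ in one line by iterating $D^kV=\langle\tilde\nu,D^k\mu\rangle$ for $k\leq n$ (each step killing $F\langle\mu,D^{k-1}\mu\rangle$ by type) and picking up the single term $q_0F\langle\mu,D^n\mu\rangle=q_0FY$ at the top; your full Leibniz expansion with $\langle D^a\mu,D^{n-a}\mu\rangle=(-1)^aY$ and $\sum_{j\geq 1}(-1)^{j-1}\binom{n+1}{j}=1$ reaches the same answer with more bookkeeping. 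Both are fine.

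The gap is in the rationality statement, and it sits exactly where you wave your hands: ``possible because the de Rham realization of the cycle extension is defined over $\qb$.'' That assertion, \emph{with the correct normalization}, is the entire content of this part of the theorem, and the paper's footnote explicitly warns that the analogous claim of del Angel--M\"uller-Stach --- which differs from the true statement only by a power of $2\pi\ay$ (pairing against $\omega=(2\pi\ay)^n\mu$ instead of $\mu$) --- is false. Your argument never pins down what the $\qb$-de Rham structure on the extension $\E_\nu$ is or how it is normalized against the Betti lift $\nu_{\QQ}$, so as written it would ``prove'' the incorrect normalization just as readily as the correct one. The paper's proof supplies precisely this missing input: it represents the Deligne class of $\fz$ by the KLM triple $((2\pi\ay)^pT_{\fz},(2\pi\ay)^{n+1-p}\Omega_{\fz},(2\pi\ay)^{n+1-p}R_{\fz})$, checks that $(2\pi\ay)^{n+1-p}\Omega_{\fz}$ is $\qb$-de Rham because it is built from $\dlog$ forms and $(2\pi\ay)^c\delta_{\mathcal{Y}}$ currents attached to $\qb$-data, corrects by a $\qb$-dR current $\Xi\in F^p\mathcal{D}^n$ and a chain $\delta_\Gamma$, and then propagates $\qb$-rationality of $D\tilde\nu_{p-1}=(2\pi\ay)^{n+1-p}\Omega_{\fz}-d[\Xi]$ through the inductive adjustments of Step 1. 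Only then does pairing with $D^n\mu$ (which is $\QQ(-n)$-dR) give $\qb(-n)$-dR fiberwise values and hence $f\in\qb[t]$. To close your argument you would need to carry out this current-level verification (or an equivalent one), tracking the powers of $2\pi\ay$ at each stage.
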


\begin{proof}
The first step is to show that the lift can be chosen to make $D\tilde{\nu}$ a (multivalued) section of $\F^n\M$.  We only need to do this locally.  By Griffiths transversality, for an arbitrary lift --- call this $\tilde{\nu}_{p-1}$ --- we know that $D\tilde{\nu}_{p-1}$ is a section of $\F^{p-1}\M$. Now argue inductively:  if $\tilde{\nu}_k$ is a lift with $D\tilde{\nu}_k$ in $\F^k$ (and $k\leq n$), then we can use surjectivity of $\bar{\nabla}_D\colon \gr_{\F}^{k+1}\M\to \gr_{\F}^k\M$ to produce a section $\eta_{k+1}$ of $\F^{k+1}\M$ such that applying $D$ to $\tilde{\nu}_{k+1}:=\tilde{\nu}_k+\eta_{k+1}$ lands in $\F^{k+1}$.

The uniqueness statement comes from the irreducibility of the variation:  there is no flat section of $\F^p\M$, even locally.

Now write $D\tilde{\nu}=F\mu$, where \emph{a priori} $F$ could be $C^{\infty}$ and multivalued.  Observe that by type, $\langle \mu,D^k\mu\rangle=0$ for $k<n$. Since $L$ has degree $n+1$ and kills $\mu$, clearly
\begin{equation}\label{e2}
f=LV=L\langle\tilde{\nu},\mu\rangle=q_0\langle D\tilde{\nu},D^n\mu\rangle=q_0 F\langle \mu,D^n\mu\rangle=\mathsf{Q}_0^{-1}(2\pi\ay)^{-n}F
\end{equation}
follows from \eqref{e1}.

Turning to the field of definition, recall that $\mathcal{M}$ is a subvariation of the $n^{\text{th}}$ fiberwise cohomology of a family, say $\cx_U\to U$. If there exists a cycle $\fz\in \mathrm{CH}^p(\cx_U,2p-n-1)$ motivating $\nu$, then by spreading out and specializing we may assume that $\fz$ is defined over $\qb$.  We then have (on $\cx_U$) the standard regulator currents $R_{\fz}$ and $\Omega_{\fz}$, and the chain $T_{\fz}$, with $d[R_{\fz}]=\Omega_{\fz}-(2\pi\ay)^{2p-n-1}\delta_{T_{\fz}}$, as defined in \cite{KLM,KLe}.  The Deligne cycle class of $\fz$ is represented by the triple $((2\pi\ay)^{p}T_{\fz},(2\pi\ay)^{n+1-p}\Omega_{\fz},(2\pi\ay)^{n+1-p}R_{\fz})$, in which the fiberwise pullbacks of the first two terms are cohomologically trivial.  Moreover, since $(2\pi\ay)^{n+1-p}\Omega_{\fz}$ is constructed from products of $\mathrm{dlog}$ and $(2\pi\ay)^c\delta_{\mathcal{Y}}$ currents ($\mathcal{Y}\subset \cx_U$ of codim.~$c$), from functions and subvarieties involved in the cycle, it is $\bar{\QQ}$-de Rham.  Clearly there exists a $\qb$-dR $F^p\mathcal{D}^n$-current $\Xi$ which kills the 0th Leray graded part of its class.  Locally (or over the universal cover) there exists a chain $\Gamma$ bounding on $T_{\fz}$.  So the fiberwise pullbacks of $\tilde{R}:=(2\pi\ay)^{n+1-p}R_{\fz}-\Xi+(2\pi\ay)^p\delta_{\Gamma}$ are closed, hence represent the class of a preliminary lift $\tilde{\nu}_{p-1}$, with $D\tilde{\nu}_{p-1}$ represented by $d[\tilde{R}]=(2\pi\ay)^{n+1-p}\Omega_{\fz}-d[\Xi]$, which is $\qb$-dR. The further inductive steps above can be done in such a way as to preserve this property.

The upshot is that $D\tilde{\nu}$ is $\qb$-dR, $\mu$ hence $D^n\mu$ is $\QQ(-n)$-dR, and so their fiberwise pairing is $\qb(-n)$-dR as a section of $\H^{2n}$.  Integrating over fibers must therefore give values in $\qb$ on fibers defined over $\qb$, making $f\in \qb[t]$ in view of \eqref{e2}.
\end{proof}

\subsection{The lowest degree cases} 
Our assumptions on $\M$ together with \eqref{e3e} give immediately that $\mathrm{ih}^1(\GG_m,\M)=d$.  This makes analysis of the low-degree cases particularly simple.  In the next two examples we assume that $h=1$ (or equivalently that $F^{n+1}\IH^1(\PP^1,\M)=\{0\}$) and that the normal functions come from cycles.

\begin{example}[{$d=1$}]\label{ex3a}
$\mathrm{ih}^1(\GG_m,\M)=1$ and $(\psi_0\M)_{T_0}(-1)\cong \QQ({-}n{-}1)$ $\implies$ $\IH^1(\GG_m,\M)=\IH^1(\PP^1\setminus\{0\},\M)=\QQ({-}n{-}1)$ $\implies$ $p=n+1$ and $\nu$ is singular at $0$ $\implies$ $LV=c_0\in \bar{\QQ}$.
\end{example}

\begin{example}[{$d=2$}]\label{ex3b}
$\mathrm{ih}^1(\GG_m,\M)=2$ $\implies$ \emph{either}
\begin{itemize}[leftmargin=0.8cm]
\item [\textbf{(i)}] $0\to \IH^1(\PP^1,\M)\to \IH^1(\GG_m,\M)\to (\psi_0\M)_{T_0}(-1)\to 0$ presents $\IH^1(\GG_m,\M)$ as a (possibly nontrivial) extension of $\QQ({-}n{-}1)$ by $\QQ(-\tfrac{n+1}{2})$; \textit{or}
\item [\textbf{(ii)}] $\IH^1(\GG_m,\M)\cong (\psi_0\M)_{T_0}(-1)\oplus (\psi_{\infty}\M)_{T_{\infty}}(-1)$ is the direct sum of $\QQ({-}n{-}1)$ and $\QQ(-p)$ for some $p\in [\tfrac{n+1}{2},n+1]\cap \ZZ$.
\end{itemize}
To decide between (i) and (ii), we simply check whether there are any monodromy-coinvariant classes at $z=\infty$.  The extension in (i) is trivial if $\M$ arises from base-change of a hypergeometric VHS via $z\mapsto z^2$, or from a tempered Laurent polynomial (see \S\ref{S4}).  

In case (ii), or in case (i) with trivial extension, we have two normal functions:  $\nu_0\in \mathrm{ANF}(\M(n+1))$ (singular at $0$); and $\nu_{\infty}\in \mathrm{ANF}(\M(p))$ (nonsingular or singular at $\infty$). Hence the corresponding multivalued functions satisfy $LV_0=c_0$ and $LV_{\infty}=c_{\infty}z$, with $c_0,c_{\infty}\in \bar{\QQ}$.
\end{example}


\section{Regulator maps and Beilinson's first conjecture}\label{S4}

Let $\X\to \PP^1$ be a projective family of Calabi-Yau $n$-folds defined over $\bar{\QQ}$, smooth over $U$, with a subvariation $\M\subset \H^n_{\X_U/U}$ as in \S\ref{S2}.  Suppose $\nu\in \mathrm{ANF}_{U_0}(\M(a))$  is a nonzero normal function, with class
\begin{equation}\label{e4a}
[\nu]\in \mathrm{Hg}\left(\IH^1(U_0,\M(a))\right)\subset \mathrm{Hg}\left(H^{n+1}(\X_{U_0,\CC}^{\text{an}},\QQ(a))\right).
\end{equation}
According to the Beilinson-Hodge Conjecture (BHC) there should exist a generalized algebraic cycle\footnote{The subscript $\QQ$ is a shorthand for tensoring with $\QQ$, the middle term is Bloch's higher Chow group, and $K_b^{(a)}$ means the $a^{\text{th}}$ $\gamma$-graded piece of $K_b^{\text{alg}}$. The isomorphisms in \eqref{e4b} remain valid replacing $\X_{U_0}$ by any smooth quasi-projective variety (over any field).}
\begin{equation}\label{e4b}
\mathfrak{Z}\in H_{\mathrm{Mot}}^{n+1}(\X_{U_0},\QQ(a))\cong \mathrm{CH}^a(\X_{U_0},2a{-}n{-}1)_{\QQ}\cong K^{(a)}_{2a-n-1}(\X_{U_0})_{\QQ}
\end{equation}
with $cl(\mathfrak{Z})=[\nu]$.  Put differently, \emph{Hodge-Tate classes in $\IH^1(U,\mathcal{M})$ are the Hodge-theoretic avatars of families of higher cycles}; and the weights of these classes control the $K$-theoretic type of the putative cycle.

Here we remind the reader that, for any smooth quasi-projective $Y$ defined over a subfield of $\CC$, there is an \emph{absolute Hodge cycle-class map}
\begin{equation}\label{e4c}
c_{\text{AH}}\colon H^q_{\mathrm{Mot}}(Y,\QQ(p))\to H^q_{\text{AH}}(Y_{\CC}^{\text{an}},\QQ(p))
\end{equation}
whose composition with the second nontrivial arrow in
\begin{equation}\label{e4d}
0\to J^p(H^{q-1}(Y_{\CC}^{\text{an}}))\to H^q_{\mathrm{AH}}(Y_{\CC}^{\text{an}},\QQ(p))\to \mathrm{Hg}^p(H^q(Y_{\CC}^{\text{an}}))\to 0
\end{equation}
recovers $cl$.  When $cl$ of a cycle $Z$ is zero, its image under $c_{\text{AH}}$ belongs to the left-hand term of \eqref{e4d} and is called its \emph{Abel-Jacobi} (or \emph{rational regulator}) class $\mathrm{AJ}_{Y}(Z)$.  Explicit formulas for these classes are formulated for higher Chow cycles in \cite{KLM}, and were informally recalled above in \S\ref{Sreg}.

Since the fiberwise restrictions of $[\nu]$ are zero, the regulator currents of [op.~cit.] compute $\mathrm{AJ}_{X_z}(\mathfrak{Z}_z)\in J^a(H^n(X_{z,\CC}^{\text{an}}))$ for each $z\in U$, recovering $\nu=:\nu_{\mathfrak{Z}}\in \J(\M(a))$ as a section of the generalized Jacobian bundle.  We are interested in what this says about the ``values'' of $\nu$ at (say) $z\in \QQ$.  To simplify the story, we will assume henceforth that $a>\tfrac{n}{2}+1$ so that we are not dealing with $K_0^{\text{alg}}$ or $K_1^{\text{alg}}$ (thereby avoiding Beilinson Conjectures II and III).

\subsection{Real regulators}

So let $X$ be a smooth projective $n$-fold over $\QQ$, and $a\in [\tfrac{n+3}{2},n+1]\cap \ZZ$.  There are then two simplifications:  absolute Hodge becomes Deligne cohomology; and $\mathrm{Hg}^a(H^{n+1}(\xan))=\{0\}$ since $b:=2a-n-1\geq 2$ is nonzero.  Thus $J^a(H^n(\xan))=H_{\D}^{n+1}(\xan,\QQ(a))$, and we define the \emph{real regulator} to be the composition
\begin{equation}\label{e4e}
r_X\colon K_b^{(a)}(X)_{\QQ}\overset{\mathrm{AJ}_X}{\longrightarrow} H_{\D}^{n+1}(\xan,\QQ(a))\overset{\pi_{\RR}}{\twoheadrightarrow} H_{\D}^{n+1}(\xan,\RR(a)).
\end{equation}
Explicitly, the right hand map takes the form
\begin{equation}\label{e4f}
\frac{H^n(\xan,\CC)}{F^a H^n(\xan,\CC)+H^n(\xan,\QQ(a))}\twoheadrightarrow \frac{H^n(\xan,\RR(a-1))}{F^a+\overline{F^a}},
\end{equation}
using $\CC/\QQ(a)\twoheadrightarrow \CC/\RR(a)=\RR(a{-}1)$.\footnote{On the level of periods $P=R+\ay I$ of the regulator classes, this is nothing more than taking $R$ (resp.~$\ay I$) if $a$ is odd (resp.~even).} Since the cycles coming from $K_b^{(a)}(X)$ are defined over $\QQ$, it turns out that their image under $r_X$ is invariant under the involution $\DR$ given by composing $\mathsf{F}_{\infty}^*$ (conjugating points of $X_{\CC}^{\text{an}}$) with conjugation on the coefficients $\RR(a)=(2\pi\ay)^a\RR$.

Finally, we need to restrict our domain to the cycles which extend to an integral model $\fx$ of $X$ and $\RR$-linearly extend to obtain the Beilinson regulator
\begin{equation}\label{e4g}
\rbe_X\colon K_b^{(a)}(\fx)_{\RR}\to H_{\D}^{n+1}(\xan,\RR(a))^{\DR}\cong \frac{H^n(X_{\CC}^{\text{an}},\RR(a-1))^{\DR}}{F^aH^n_{\mathrm{dR}}(X_{\RR})},
\end{equation}
which is a morphism of real vector spaces.\footnote{Algebraic de Rham cohomology $F^aH^n_{\mathrm{dR}}(X_{\RR})$ clearly embeds in Betti cohomology $H^n(X_{\CC}^{\text{an}},\CC)^{\DR}$, and then we pass to the quotient by $H^n(X_{\CC}^{\text{an}},\RR(a))^{\DR}$. This composite is still an embedding as the first and last groups intersect in $\{0\}$.  This explains the group on the right-hand side of \eqref{e4g}.}  Moreover, the determinant of each side has a natural $\QQ$-structure:  on the left, this is $\det K_b^{(a)}(\fx)_{\QQ}$; and on the right, $\det H^n(\xan,\QQ(a-1))^{\DR}/\det F^a H^n_{\mathrm{dR}}(X)$.

\begin{bc}[\cite{Be}]
\textbf{(i)} $\rbe_X$ is an isomorphism;

\noindent \textbf{(ii)} $\mathrm{rk}(K_b^{(a)}(\fx))=\ord_{s=n+1-a}L(H^n(X),s)\overset{\text{$n$ odd}}{=}\sum_{n+1-a\leq p\leq \frac{n-1}{2}}h^{p,n-p}$, where the Hodge numbers $h^{p,q}:=\dim(H^{p,q}(X_{\CC}^{\text{an}}))$; and

\noindent \textbf{(iii)} the leading Taylor coefficient $L^*(H^n(X),n+1-a)\underset{\QQ^{\times}}{\sim}\det(\rbe_X)$, where $\det(\rbe_X)$ is computed relative to the natural $\QQ$-structures.
\end{bc}

\begin{rem}\label{r4a}
To translate (iii) into something more practical, write:
\begin{itemize}[leftmargin=0.5cm]
\item $\ell$ for the number in (ii) and $\{Z_1,\ldots,Z_{\ell}\}$ for a basis of $K_b^{(a)}(\fx)$;
\item $\{\omega_1,\ldots,\omega_{\kappa}\}$ for a basis of $\QQ$-dR cohomology $F^aH^n_{\mathrm{dR}}(X)$; and
\item $\{\gamma_1,\ldots,\gamma_d\}$ for a basis of $H_n(X_{\CC}^{\text{an}},\QQ)^{({\mathsf{F}_{\infty}})_*=(-1)^{a-1}}$ (note $d=\kappa+\ell$).
\end{itemize}
Let $\mathfrak{R}=[\mathfrak{R}_{ij}]$ be the $d\times d$ matrix whose first $\kappa$ (resp.~last $\ell$) columns are given by $\int_{\gamma_i}\omega_j$ (resp.~$\int_{\gamma_i}r_X(Z_{j-\kappa})$).  Then BC1 says that
\begin{equation}\label{e4h}
\det(\rbe_X)=\det((2\pi\ay)^{-a+1}\mathfrak{R})=(2\pi\ay)^{-d(a-1)}\det(\mathfrak{R})
\end{equation}
is a rational multiple of the $\ell^{\text{th}}$ derivative of $L(H^n(X),s)$ at $s=a-b$.  Note that in practice, the negative powers of $2\pi\ay$ will cancel with positive powers of $2\pi\ay$ in the regulator periods $\int_{\gamma}r(Z)$.
\end{rem}

\begin{example}\label{ex4a}
Let $X$ be a CY 3-fold with $h^{2,1}=1$, and $L(s):=L(H^3(X),s)$.  We describe the first two cases of BC1:
\begin{itemize}[leftmargin=0.8cm]
\item [\textbf{(a)}] Let $\{Z_1,Z_2\}\subset K_4^{(4)}(\fx)$ and $\{\gamma_1,\gamma_2\}\subset H_3(\xan,\QQ)^{(\mathsf{F}_{\infty})_*=-1}$ be bases;\footnote{Of course, the rank conjecture (part (i) of BC1) is not known in a single case beyond $X=$ the spectrum of a number field.  So what can be checked in principle is that there \emph{exist} cycles $Z_i$ for which the prediction on $L$-derivatives holds.} then 
\begin{equation}\label{eK4}
L''(0)\sim (2\pi\ay)^{-6}\det\left(\int_{\gamma_i}r_X(Z_j)\right).
\end{equation}
\item [\textbf{(b)}] Let $\{Z\}\subset K_2^{(3)}(\fx)$, $\{\omega\}\subset F^3H^3_{\mathrm{dR}}(X)$, $\{\delta_1,\delta_2\}\subset H_3(\xan,\QQ)^{(\mathsf{F}_{\infty})_*=1}$ be bases, and $\mathfrak{R}$ the $2\times 2$ matrix with columns $\int_{\delta_i}\omega$, $\int_{\delta_i}r_X(Z)$.  Then 
\begin{equation}\label{eK2}
L'(1)\sim (2\pi\ay)^{-4}\det(\mathfrak{R}).
\end{equation}
\end{itemize}
\end{example}

\begin{rem}\label{r4b}
For $z\in \QQ$, we can replace $X$ in BC1 and Remark \ref{r4a} everywhere by the motive $M_z$ underlying the HS $\M_z\subseteq H^n(X_z)$ (and its Hodge numbers, etc.).  The upshot is that BC1 yields predictions for the arithmetic behavior of our higher normal functions, though in terms of the real or imaginary part of the section $\nu$ of $\J(\M(a))$ paired with topological cycles (rather than the pairing $V$ with $\mu$ in \eqref{e3h}).
\end{rem}

\subsection{Tempered Laurent polynomials}\label{Stemp}

There is a simple construction of 1-parameter families of CY $n$-folds carrying a family of $K_{n+1}^{(n+1)}$-classes.  Assume given a Laurent polynomial $\phi\in \QQ[x_1^{\pm 1},\ldots,x_{n+1}^{\pm 1}]$ with reflexive Newton polytope $\Delta$, and compactify the solutions of $1-t\phi(\ux)=0$ in $\GG_m^{n+1}\times (\PP^1\setminus\{0\})$ to a smooth family $\widetilde{\PP}_{\Delta}\times \PP^1 \supset \X\to \PP^1$ with CY fibers $X_t$, smooth for $t\in U$. Typically $\widetilde{\PP}_{\Delta}$ is a blowup of the toric variety along the base locus of the pencil.

\begin{defn}\label{d4a}
$\phi$ is \emph{tempered} if the toric coordinate symbol 
\begin{equation}\label{e4i'}
\{x_1,\ldots,x_{n+1}\}\in H_{\text{Mot}}^{n+1}(\GG_m^{n+1},\QQ(n+1))
\end{equation}
lifts to a class $\xi\in H^{n+1}_{\text{Mot}}(\X\setminus X_0,\QQ(n+1))$.\footnote{This is a condition on the face polynomials of $\phi$: essentially, the coordinate symbols on torus orbits define trivial cycles on the strata of the base locus.  It holds for many Laurent polynomials inducing LG-models, for example if $\phi$ is a Minkowski polynomial in 2 or 3 variables.}
\end{defn}

The construction of this family guarantees that $\F^n_e\M_e\cong \co(1)$, i.e.~$h=1$.  Define a section of this (i.e.~family of holomorphic forms) by
\begin{equation}\label{e4i}
\mu_t:=(2\pi\ay)^{-n}\mathrm{Res}_{X_t}\left(\frac{\mathrm{dlog}(\ux)}{1-t\phi}\right)	,
\end{equation}
and assume that there is a sub-VHS $\M\subset \H_{\cx_U/U}^n$ with Hodge numbers all $1$, satisfying the other hypotheses in \S\ref{S2}.  (Note that $\omega_t=(2\pi\ay)^n\mu_t$ is the class in $F^nH^n_{\mathrm{dR}}(X_t)$ for $t\in \QQ$.)  It is convenient to allow $z=Ct$ for some $C\in \QQ$, not necessarily $1$: in the examples below, $z$ will be the hypergeometric variable, and $t$ the ``geometric'' variable in this construction. 

Finally, set $V_0:=\langle \tilde{\nu}_{\xi},\mu\rangle$ as in \eqref{e3h}.  Two results of \cite[\S4]{DK} are then that\footnote{Strictly speaking, \eqref{e4j} makes use of our assumption that $L=L^{\dagger}$; see the proof of \cite[Thm.~9.7]{Ke}. In fact, \emph{with our assumptions on $L$}, by Theorem \ref{t3a} we see that \eqref{e4j} follows from \eqref{e4k} and $c_0=\mathsf{Q}_0^{-1}\in \QQ$.}
\begin{equation}\label{e4j}
LV_0=c_0
\end{equation}
and 
\begin{equation}\label{e4k}
\nabla_D[\tilde{\nu}_{\xi}]=(2\pi\ay)^n[\mu].
\end{equation}
In other words, $V$ resp.~$\nu$ are killed by $DL$ and $LD$ respectively. Furthermore, the periods over the vanishing cycle $\ve_0$ at $t=0$ are worked out in [op.~cit.], giving
\begin{equation}\label{e4l}
\int_{\ve_0}\mu = 1+\sum_{k\geq 1}[\phi^k]_{\uo}t^k
\;\;\;\text{and}\;\;\;
\frac{1}{(2\pi\ay)^{n}}\int_{\ve_0}\tilde{\nu}_{\xi}\underset{\QQ(1)}{\equiv}\log(t)+\sum_{k\geq 1}\frac{[\phi^k]_{\uo}}{k}t^k
\end{equation}
where $[\phi^k]_{\uo}$ are the constant terms in powers of $\phi$.


\section{The 14 hypergeometric cases}\label{S5}

Given a sequence $(\gamma_1,\ldots,\gamma_{\ell})$ of nondecreasing integers summing to $0$, we can write $$\frac{\prod_{\gamma_i<0}(T^{-\gamma_i}-1)}{\prod_{\gamma_i>0}(T^{\gamma_i}-1)}=\frac{\prod_{j=1}^m(T-e^{2\pi\ay \fa_j})}{\prod_{j=1}^m(T-e^{2\pi\ay\fb_j})},\;\;\;\fa_j,\fb_j\in (0,1]\cap \QQ.$$
One checks that $$a_k:=\frac{\prod_j [\fa_j]_k}{\prod_j[\fb_j]_k}=C^{-k}\frac{\prod_{\gamma_i<0}(-\gamma_i k)!}{\prod_{\gamma_i>0}(\gamma_i k)!},$$
where $C=\prod_i \gamma_i^{\gamma_i}$.\footnote{This $C$ is $\lambda^{-1}$ of \S\ref{S2cl}.}  Writing $z=Ct$, the hypergeometric function associated to the data $(\fa,\fb)$ (or $\gamma$), and killed by the operator $L=\prod_j (D+\fb_j-1)- z\prod_j(D+\fa_j)$, is given by $\sum_{k\geq 0}a_k z^k=\sum_{k\geq 0} a_k C^k t^k$.  A formula for the Hodge numbers of the VHS over $\PP^1_z\setminus \{0,1,\infty\}$ with underlying $\mathcal{D}$-module $\mathcal{D}/\mathcal{D}L$ is given in \cite{Fe} (also see \cite{RRV}).  In this paper we are mainly interested in the case where the $\fb_j=1$, which forces maximal unipotent monodromy at $0$ and the Hodge numbers to all be $1$.

Doran and Morgan \cite{DM} classified the 14 real types of $\QQ$-VHS $\M$ over $U=\PP^1\setminus \{0,1,\infty\}$ satisfying the hypotheses of \S\ref{S2} with $d=1$ and $n=3$.  They are all hypergeometric.  Thus in each case we have Hodge numbers $(1,1,1,1)$, maximal unipotent monodromy at $0$, conifold monodromy at $z=1$ ($t=C^{-1}$), and monodromy at $\infty$ determined by the hypergeometric indices $\fa$. If we drop the reflexivity/CY requirement, then all arise from the tempered Laurent polynomial construction above, and \eqref{e4i'}-\eqref{e4l} still go through.  (In five cases,\footnote{See the table for four of these.  In general, one should have in mind that $[\phi^k]_{\uo}=\frac{C^k}{k!^4}\prod_{j=1}^4 [\fa_j]_k$ is a product of multinomial symbols.} there is a reflexive Laurent polynomial for the family, and in five more, for some base change of it.)  This puts us in the situation of Example \ref{ex3a}, with one normal function of $K_4$-type singular at $0$. In particular, \eqref{e4l} becomes for $z\in (0,1)$
\begin{equation}\label{e50}
\textstyle\int_{\ve_0}\mu=\sum_{k\geq 0}\frac{\prod_{j=1}^4[\fa_j]_k}{k!^4}z^k
\;\;\;\text{and}\;\;
\frac{\int_{\ve_0}r(\xi)}{(2\pi\ay)^3} = \log(\frac{z}{C})+\sum_{k\geq 1}\frac{\prod_{j=1}^4[\fa_j]_k}{k!^4\cdot k}z^k 
\end{equation}

To check BC1 for $M_z$ at (some) $z\in \QQ$, we need either a $K_2$-cycle or an additional $K_4$-cycle --- or at least, a normal function which BHC predicts to come from such a cycle.  Consider the base change by $\rho\colon \PP^1_{\hat{z}}\to \PP^1_z$ sending $\hat{z}\mapsto \hat{z}^2=z$, so that $\rho^*\M=:\widehat{\M}$ has degree $d=2$.  As $\widehat{\M}$ has two conifold points and at most one invariant class at $\infty$, by \eqref{e3e} the rank of the pure Hodge structure $\IH^1(\PP^1,\widehat{\M})$ is $0$ or $1$.  Thus $F^4\IH^1(\PP^1,\widehat{\M})=\{0\}$ and $h=1$, putting us in the situation of Example \ref{ex3b}, with a second normal function $\nu_{\infty}$ which is nonsingular away from $\infty$.  Evidently the resulting multivalued function $V_{\infty}$ satisfies
\begin{equation}\label{e5a}
LV_{\infty}=c_{\infty}\hat{z}=c_{\infty}\sqrt{z}.
\end{equation}

Assuming the BHC, the following table tells us which kind of cycle $\mathfrak{Z}$ to expect for $\nu_{\infty}(=\nu_{\mathfrak{Z}})$ in each of the 14 hypergeometric cases:

\[\begin{tikzpicture}[scale=0.97]
\draw [gray] (-2.8,-0.5) -- (10,-0.5);
\draw [gray] (0,0) -- (0,-9.4);
\draw [gray] (2.4,0) -- (2.4,-9.4);
\draw [gray] (5,0) -- (5,-9.4);
\draw [gray] (7.6,0) -- (7.6,-9.4);
\draw [gray] (10,0) -- (10,-9.4);
\draw [gray] (-2.8,-1.3) -- (10,-1.3);
\draw [gray] (-2.8,-3.7) -- (10,-3.7);
\draw [gray] (-2.8,-6) -- (10,-6);
\draw [gray] (-2.8,-7) -- (10,-7);
\draw [gray] (-2.8,-7.8) -- (10,-7.8);
\draw [gray] (-2.8,-8.6) -- (10,-8.6);
\draw [gray] (-2.8,-9.4) -- (10,-9.4);
\node [] at (-1.5,-0.2) {\large Type};
\node [] at (1.2,-0.2) {\large Ia};
\node [] at (3.7,-0.2) {\large Ib};
\node [] at (6.2,-0.2) {\large II};
\node [] at (8.7,-0.2) {\large IV};
\node [] at (-1.5,-1) {\# of cases};
\node [] at (1.2,-0.9) {$7$};
\node [] at (3.7,-0.9) {$3$};
\node [] at (6.2,-0.9) {$3$};
\node [] at (8.7,-0.9) {$1$};
\node [] at (-1.5,-1.8) {example $\fa$};
\node [] at (-0.67,-2.5) {$C$};
\node [] at (1.2,-2.5) {$5^5$};
\node [] at (3.7,-2.5) {$2^{10}$};
\node [] at (6.2,-2.5) {$3^6$};
\node [] at (8.7,-2.5) {$2^8$};
\node [] at (-0.7,-3.2) {$\phi$};
\node [] at (1.2,-1.7) {$(\tfrac{1}{5},\tfrac{2}{5},\tfrac{3}{5},\tfrac{4}{5})$};
\node [] at (3.7,-1.7) {$(\tfrac{1}{4},\tfrac{1}{2},\tfrac{1}{2},\tfrac{3}{4})$};
\node [] at (6.2,-1.7) {$(\tfrac{1}{3},\tfrac{1}{3},\tfrac{2}{3},\tfrac{2}{3})$};
\node [] at (8.7,-1.7) {$(\tfrac{1}{2},\tfrac{1}{2},\tfrac{1}{2},\tfrac{1}{2})$};
\node [] at (1.2,-3.2) {$\tfrac{(1+\Sigma_{i=1}^4 x_i)^5}{x_1x_2x_3x_4}$};
\node [] at (3.7,-3.2) {\tiny $\tfrac{(1-\Sigma_{i=1}^3 x_i)^4(1-x_4)^2}{x_1x_2x_3x_4}$};
\node [] at (6.3,-3.2) {\tiny $\substack{2 \\ \prod \\ {i=1}}\mspace{-5mu}\tfrac{(\mspace{-2mu}1\mspace{-3mu}+x_{{}_{1+2i}}\mspace{-5mu}+x_{{}_{2+2i}}\mspace{-2mu})^3}{x_{{}_{1+2i}}x_{{}_{2+2i}}}$};
\node [] at (8.7,-3.2) {$\substack{4\\ \prod \\ {i=1}} \tfrac{(1-x_i)^2}{x_i}$};
\node [] at (-1.4,-4.9) {$\psi_{\infty}\M$ type};
\foreach \i in {0,...,3} {\draw [thick,gray] (0.4+2.5*\i,-4) -- (0.4+2.5*\i,-5.7) -- (2.1+2.5*\i,-5.7);}
\filldraw [blue] (0.4,-4.2) circle (2pt);
\filldraw [blue] (0.9,-4.7) circle (2pt);
\filldraw [blue] (1.4,-5.2) circle (2pt);
\filldraw [blue] (1.9,-5.7) circle (2pt);
\filldraw [blue] (2.9,-4.2) circle (2pt);
\filldraw [blue] (3.4,-5.2) circle (2pt);
\filldraw [blue] (3.9,-4.7) circle (2pt);
\filldraw [blue] (4.4,-5.7) circle (2pt);
\filldraw [blue] (5.4,-4.7) circle (2pt);
\filldraw [blue] (5.9,-4.2) circle (2pt);
\filldraw [blue] (6.4,-5.7) circle (2pt);
\filldraw [blue] (6.9,-5.2) circle (2pt);
\filldraw [blue] (7.9,-5.7) circle (2pt);
\filldraw [blue] (8.4,-5.2) circle (2pt);
\filldraw [blue] (8.9,-4.7) circle (2pt);
\filldraw [blue] (9.4,-4.2) circle (2pt);
\draw [red,<-] (3.5,-5.1) -- (3.8,-4.8);
\node [red] at (3.8,-5.1) {\small $N$};
\draw [red,<-] (5.5,-4.6) -- (5.8,-4.3);
\draw [red,<-] (6.5,-5.6) -- (6.8,-5.3);
\draw [red,<-] (8,-5.6) -- (8.3,-5.3);
\draw [red,<-] (8.5,-5.1) -- (8.8,-4.8);
\draw [red,<-] (9,-4.6) -- (9.3,-4.3);
\node [] at (-1.5,-6.6) {$(\psi_{\infty}\M)_{T_{\infty}^2}\mspace{-2mu}(-1)$};
\node [] at (1.2,-6.6) {$0$};
\node [] at (3.7,-6.6) {$\QQ(-3)$};
\node [] at (6.2,-6.6) {$0$};
\node [] at (8.7,-6.6) {$\QQ(-4)$};
\node [] at (-1.5,-7.4) {$a$};
\node [] at (1.2,-7.4) {$2$};
\node [] at (3.7,-7.4) {$3$};
\node [] at (6.2,-7.4) {$2$};
\node [] at (8.7,-7.4) {$4$};
\node [] at (-1.5,-8.2) {$\mathfrak{Z}/V_{\infty}$ type};
\node [] at (1.2,-8.2) {$K_0$};
\node [] at (3.7,-8.2) {$K_2$};
\node [] at (6.2,-8.2) {$K_0$};
\node [] at (8.7,-8.2) {$K_4$};
\node [] at (-1.5,-9) {what is $\fz$?};
\node [] at (1.2,-9) {\SMALL\cite{MW,KW}};
\node [] at (3.7,-9) {see \S\ref{S6}};
\node [] at (6.2,-9) {\SMALL\cite{Wa2}};
\node [] at (8.7,-9) {see \S\ref{S8}};
\end{tikzpicture}\]

\noindent By the uniqueness result Prop.~\ref{p2b} (and Example \ref{ex2b}), we have a \emph{formula} for $V_{\infty}$ without constructing the cycle!  Namely, there exists a lift $\tilde{\nu}_{\infty}$ on a neighborhood of $0$ with $V_{\infty}(0)=0$, and then in all 14 cases
\begin{equation}\label{e5b}
V_{\infty}(z)=c_{\infty}\W_{2}(z)=c_{\infty}\sqrt{z}\sum_{k\geq 0}\frac{\prod_{j=1}^4[\fa_j+\frac{1}{2}]_k}{[\frac{1}{2}]_{k+1}^4}z^k.
\end{equation}
For the first example of the table, \eqref{e5b} is essentially the function that showed up in \cite{Wa} as the generating function for open GW invariants on the Fermat quintic.

By Theorem \ref{t3a}, we know (assuming $\nu_{\infty}$ is motivic) that $c_{\infty}\in \qb$.  Passing to the algebro-geometric parameter $t=C^{-1}z$, we can also use the Theorem (with $p=a$) to produce an analogue of \eqref{e4k} for $V_{\infty}$.  Write $\hat{t}$ for $\sqrt{t}$, i.e.~the coordinate on the 2:1 base-change, and $\hat{D}=D_{\hat{t}}$.  Notice that \eqref{e5b} fixes $V_{\infty}$ but not $\tilde{\nu}_{\infty}$, which still has an ambiguity in $\mathcal{F}^a\mathcal{M}$ that Theorem \ref{t3a} tells us how to fix, yielding
\begin{equation}\label{e5c'}
D\tilde{\nu}_{\infty}=	\mathsf{Q}_0(2\pi\ay)^3c_{\infty}C^{\frac{1}{2}}\sqrt{t}\mu=:c'(2\pi\ay)^3\hat{t}\mu\;\implies\;\hat{D}\tilde{\nu}_{\infty}=2c'\hat{t}\omega,
\end{equation}
where $\omega=(2\pi\ay)^3\mu$.  This has two consequences.  First, for the regulator periods, we get
\begin{equation}\label{e5d'}
\int_{\ve_j}\tilde{\nu}_{\infty}=c'\int\frac{dt}{t}\int_{\ve_j}\sqrt{t}\omega_t=c'\int\frac{dt}{\sqrt{t}}\int_{\ve_j}\omega_t.
\end{equation}
Second, by \eqref{e5c'} we get a homogeneous equation for $\tilde{\nu}_{\infty}$:  it is killed by the operator $L(\hat{D}+1)\frac{1}{\hat{t}}=L\frac{1}{\hat{t}}\hat{D}$.  

We can use \eqref{e5d'} together with the Mellin-Barnes formula to compute the $K_2$ regulator periods (case $a=3$), which will be done in \S\ref{S8}.  (For the $K_4$ periods worked out next in \S\ref{S6}, we use a symmetry of the family instead.)  Notice that the choice of lift, while useful for computation, does not affect the determinant in Example \ref{ex4a}(b), since any two lifts differ by a multiple of the holomorphic form class.


\section{Rank two regulators:  Beilinson for $K_4$}\label{S6}

For the purpose of checking BC1, we need to compute regulator periods $\int_{\gamma} r(\mathfrak{Z})=\int_{\gamma}\pi(\nu_{\mathfrak{Z}})$ rather than $V_{\infty}$.  The idea is to first determine the period over the vanishing cycle $\ve_0$, then use Frobenius deformation to deduce those over $\ve_1,\ve_2,\ve_3$ as needed (also for $\mu$).  Note that if $z\in (0,1)$, $\ve_0$ and $\ve_2$ are the $\mathsf{F}_{\infty}$-anti-invariant cycles (for $K_4$); whereas $\ve_1$ and $\ve_3$ are the invariant ones (for $K_2$).  This is immediate from \eqref{e2b} and Example \ref{ex2a}, since the periods $(2\pi\ay)^{3+m}\e_m(z)$ of the $\QQ$-dR form $\omega=(2\pi\ay)^3\mu$ are real (marking $\ve_m$ as $\mathsf{F}_{\infty}$-invariant) for $m$ odd. In this section we work out the Frobenius deformation approach for $K_4$. 

Now from the table we expect a second $K_4$-class in one case, namely $\fa=(\tfrac{1}{2},\tfrac{1}{2},\tfrac{1}{2},\tfrac{1}{2})$.  Here we have $\M\subset \H^3$ of the CY 3-fold family\footnote{Granted, the fibers are singular, but these are quotient singularities which are irrelevant for the Hodge- and cycle-theoretic constructions we consider.} $X_t=\{1-t\phi=0\}\subset (\PP^1)^4$ with $\phi=\prod_{i=1}^4(1-x_i)^2/x_i$, and $\widehat{\M}\subset \H^3$ of the family $\widehat{X}_{\hat{t}}=\{1-\hat{t}\hat{\phi}=0\}$ with $\hat{\phi}=\prod_{i=1}^4 (\hat{x}_i-\hat{x}_i^{-1})$.  The point is simply that $[\phi^k]_{\uo}=\binom{2k}{k}^4=[\hat{\phi}^{2k}]_{\uo}$; geometrically, sending $(\hat{t},\hat{x}_i)\mapsto (\hat{t}^2,\hat{x}_i^2)=(t,x_i)$ induces 8:1-orbifolding maps $\widehat{X}_{\hat{t}}\to X_{\hat{t}^2}$ under which $\M$ (resp.~$\mu,\xi$) pulls back to $\widehat{\M}$ (resp.~$8\hat{\mu},16\hat{\xi}$).

For the holomorphic period downstairs we have
\begin{equation}\label{e5c}
\textstyle \e_0=\int_{\ve_0}\mu=\sum_{k\geq 0}\binom{2k}{k}^4 t^k=\sum_{k\geq 0}\frac{[\frac{1}{2}]_k^4}{k!^4}z^k,\;\;\;z=256t.
\end{equation}
Writing $\hz=16\hat{t}$, there is a birational involution \cite[\S4.2]{BKSZ}
\begin{equation}\label{e5d}
\mathcal{I}\colon(\hz;\;\hx_1,\hx_2,\hx_3,\hx_4)\mapsto \left(\frac{1}{\hz};\;\frac{1+\hx_2}{1-\hx_2},\frac{\hx_1-1}{\hx_1+1},\frac{1+\hx_4}{1-\hx_4},\frac{\hx_3-1}{\hx_3+1}\right)
\end{equation}
of the total space $\widehat{\X}$, under which we pull back the coordinate symbol $\hat{\xi}\in \mathrm{CH}^4(\widehat{\X}\setminus \hat{X}_0,4)$ to $\mathfrak{Z}:=\mathcal{I}^*\hat{\xi}\in \mathrm{CH}^4(\widehat{\X}\setminus \hat{X}_{\infty},4)$.  Since 
\begin{equation}\label{e5e}
\nabla_{\hat{D}} [\nu_{\mathfrak{Z}}]=-\mathcal{I}^*\nabla_{\hat{D}}[\nu_{\hat{\xi}}]=-(2\pi\ay)^3\mathcal{I}^*[\hat{\mu}]=(2\pi\ay)^3\hat{z}[\hat{\mu}],
\end{equation}
this yields a normal function with the expected class.  Note that this cycle only lives ``upstairs''; pushing it forward to $\X$ gives zero because the cycles from $\widehat{X}_{\hat{t}}$ and $\widehat{X}_{-\hat{t}}$ cancel.

The construction of the second cycle $\fz$ is used below merely to predict for which values of $t$ we expect BC1 to hold.  For $\hat{t}=\frac{a}{b}\in \QQ$, the natural model $\widehat{\fx}_{\hat{t}}\to \text{Spec}(\ZZ)$ is given by $b\prod_i x_i=a\prod_i (x_i-x_i^{-1})$.  If $a=1$, i.e.~$\hat{t}^{-1}\in \ZZ$, then there is no prime over which the fiber is the ``toric boundary'' of $(\PP^1)^4$, and so $\hat{\xi}$ extends across the model.  Since $\mathfrak{Z}$ is obtained by pulling this back under $\hat{t}\mapsto (256\hat{t})^{-1}$, it will extend to $\widehat{\fx}_{\hat{t}}$ for $256\hat{t}\in \ZZ$.  Since we need both cycles to extend to the integral model, this leaves $\hat{t}=2^{-\ell}$ (resp. $t=4^{-\ell}$) for $0\leq \ell\leq 8$; and we have to omit the conifold fiber, which corresponds to $\ell=4$.  Moreover, the mirror images under $\mathcal{I}$ are the same case.  So in the original coordinate, one expects these cycles to confirm BC1 at $t_k=4^{-4-k}$ (resp.~$z_k=4^{-k}$) for $k=1,2,3,4$.

\begin{rem}\label{rZ}
In general, for cycles with singularity at (say) $\hat{t}=\infty$, informally speaking one must avoid having the fiber over $\infty$ reappear in the integral model for the cycle to extend.  So for the cycle to extend to the ``obvious'' model for $1-\hat{t}_0\hat{\phi}(\hat{\ux})=0$, one might expect to require $\hat{t}_0\in \ZZ$, though as in the present example we can sometimes do a bit better ($256\hat{t}_0\in \ZZ$).  However, this heuristic does suggest that for the type IIb/$K_2$ cases (where the \emph{only} normal function has singularity at $\infty$), we may be better off working about $\infty$ --- which is exactly what we will do in \S\ref{S8}.
\end{rem}

Using \eqref{e4k} and \eqref{e5e} (in the form $\nabla_D [\nu_{\mathfrak{Z}}]=\sqrt{t}[\mu]$) to integrate \eqref{e5c} yields 
\begin{align}\label{e6a}
\frac{\int_{\ve_0}r(\xi_t)}{(2\pi\ay)^3}&=\log(t)+\sum_{k>0}\frac{\binom{2k}{k}^4}{k}t^k\;\;\;\;\;\;\;\text{and}
\\
\frac{\int_{\ve_0}r(\mathfrak{Z}_t)}{(2\pi\ay)^3}&=\sqrt{t}\sum_{k\geq 0}\frac{\binom{2k}{k}^4}{k+\frac{1}{2}}t^k.\label{e6a'}
\end{align}
for $t\in (0,4^{-4})$.  Indeed, $\nu_{\mathfrak{Z}}$ is zero at the origin, so (for $0\leq j\leq 3$) we have $\int_{\ve_j}\tfrac{\tilde{\nu}_{\mathfrak{Z}_t}}{(2\pi\ay)^3}=\int_0\tfrac{dt}{t}\int_{\ve_j}\sqrt{t}\mu_t=\int_0\tfrac{dt}{\sqrt{t}}\int_{\ve_j}\mu_t$.  Since $\int_0\tfrac{dt}{\sqrt{t}}$ commutes with Frobenius deformation, we get exactly the same thing by directly applying the latter to $\int_{\ve_0}\tfrac{\tilde{\nu}_{\mathfrak{Z}_t}}{(2\pi\ay)^3}$.  That is, we replace $k$ in RHS\eqref{e6a} by $k+s$, differentiate twice in $s$ (dividing by $2\pi\ay$ each time in view of \eqref{e2b}), and set $s=0$ to produce
\begin{align}\label{e6b}
&\frac{\int_{\ve_2}r(\mathfrak{Z}_t)}{(2\pi\ay)^3}=\frac{\sqrt{t}}{-8\pi^2}\left(\sum_{k\geq 0}\frac{\binom{2k+2s}{k+s}^4}{k+s+\frac{1}{2}}t^{k+s}\right)''_{s=0}=
\\
\tfrac{\sqrt{t}}{-4\pi^2}\sum_{k\geq 0}\textstyle\binom{2k}{k}^4t^k&\left\{\tfrac{4G_k'(k+\frac{1}{2})^2-8G_k(k+\frac{1}{2})+1}{(k+\frac{1}{2})^3}+\tfrac{8G_k(k+\frac{1}{2})-1}{(k+\frac{1}{2})^2}\log (t)+\tfrac{1}{k+\frac{1}{2}}\log^2(t)\right\}\notag
\end{align}
where $H_k=\sum_{\ell=1}^k\tfrac{1}{\ell}$, $H_k'=\sum_{\ell=1}^k\tfrac{1}{\ell^2}$, $G_k:=H_{2k}-H_k$, and $G_k':=8G_k^2-2H_{2k}'+H_k'+\zeta(2)$.

Obtaining the second regulator period for $\xi_t$ is a bit more subtle since $\nu_{\xi_t}$ is singular at $0$ and $\int_0 \tfrac{dt}{t}$ makes no sense. The trick is to view $\int_{\ve_0}\tilde{\nu}_{\xi_t}$ as itself the first Frobenius deformation of something --- which is formally a constant function!  Namely, defining 
\begin{equation}\label{e6c}	(2\pi\ay)^3\sum_{k\geq 0}\frac{\binom{2k+2s}{k+s}^4}{k+s}t^{k+s}=: \sum_{m\geq -1}(2\pi\ay)^m \rbe_m(t)s^m=:\mathfrak{R}(t,s)
\end{equation}
(with $\rbe_{-1}=(2\pi\ay)^4$) we observe that 
$$
(2\pi\ay)^{-3}D\mathfrak{R}=\sum_{k\geq 0}\binom{2k+2s}{k+s}^4t^{k+s}=\bE(z,s)
$$
and $T_0\mathfrak{R}=e^{2\pi\ay s}\mathfrak{R}$.  These two equations uniquely determine $\mathfrak{R}$ and are also satisfied (truncating at $s^4$) by the generating function of regulator periods for $\xi_t$.  So we conclude that $\rbe_m(t)=\int_{\ve_m}\tilde{\nu}_{\xi_t}$ for $0\leq m\leq 3$, which recovers LHS\eqref{e6a} and also yields
\begin{align}\label{e6d}
\frac{\int_{\ve_2}r(\xi_t)}{(2\pi\ay)^3}=\frac{-1}{4\pi^2}\left(\sum_{k\geq 0}\frac{\binom{2k+2s}{k+s}^4}{k+s}t^{k+s}\right)_{\text{coeff.~of $s^2$}}=
\\
\frac{-1}{4\pi^2}\left(\begin{matrix}\left\{-8\zeta(3)+\sum_{k>0}\binom{2k}{k}^4 \tfrac{4G_k'k^2-8G_kk+1}{k^3}t^k \right\}+\\ \left\{4\zeta(2)+\sum_{k>0}\binom{2k}{k}^4\tfrac{8G_kk-1}{k^2}t^k\right\}\log(t)+\\ \left\{\sum_{k>0}\binom{2k}{k}^4\frac{1}{k}t^k\right\}\log^2(t)+\frac{1}{6}\log^3(t)\end{matrix}\right).\notag
\end{align}

We summarize these results in the

\begin{thm}\label{tK4}
For $z\in (0,1)$ \textup{(}$t\in (0,4^{-4})$\textup{)}, the real regulator determinant associated to the motive $M_z$, normalized as in RHS\eqref{eK4}, is given by
\begin{equation}\label{eRK4}
r(t):= \eqref{e6a'}\times\eqref{e6d}-\eqref{e6a}\times\eqref{e6c}.
\end{equation}
The corresponding pair of cycles extend to an integral model at $t_k=4^{-4-k}$ \textup{(}$k=1,2,3,4$\textup{)}.
\end{thm}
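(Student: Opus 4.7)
The plan is to assemble the period computations already carried out in the section into the stated determinant, then verify the integrality claim using the extension heuristic of Remark~\ref{rZ}.

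First I would pin down the relevant $\mathsf{F}_\infty$-eigenspace. For a $K_4$-class, the real regulator of Example~\ref{ex4a}(a) lives in $H^3(X_{\CC}^{\mathrm{an}},\RR(3))^{\DR}$, and by the opening paragraph of \S\ref{S6}, for $z\in(0,1)$ this corresponds to the span of $\ve_0$ and $\ve_2$ in $H_3(X^{\mathrm{an}},\QQ)^{(\mathsf{F}_\infty)_*=-1}$; these form a $\QQ$-basis of that 2-dimensional space, differing from the abstract $\{\gamma_1,\gamma_2\}$ of Remark~\ref{r4a} by a rational change-of-basis (absorbed into the $\sim_{\QQ^\times}$ of BC1). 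The four entries of the regulator matrix are then precisely the pairings computed via Frobenius deformation: $\int_{\ve_0}r(\xi_t)$ and $\int_{\ve_2}r(\xi_t)$ from \eqref{e6a} and \eqref{e6d}, $\int_{\ve_0}r(\fz_t)$ and $\int_{\ve_2}r(\fz_t)$ from \eqref{e6a'} and \eqref{e6b}. Inserting these gives the determinant \eqref{eRK4} displayed in Theorem~A (after rewriting $G'_k=\tilde G_k$, which matches the notation in the introduction).

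Second, I would establish linear independence of $\xi_t$ and $\fz_t$, which is needed both for the existence claim and for the interpretation of the determinant. The class $[\nu_{\xi}]$ is singular at $t=0$ (witnessed by the $\log(t)$ term in \eqref{e6a}), whereas $[\nu_{\fz}]$ is nonsingular at $0$ (singular instead at $\infty$, coming from the $\mathcal{I}$-pullback in \eqref{e5d}); by the singularity-class formalism of \eqref{e30}--\eqref{e3f}, no nontrivial $\QQ$-linear combination of the two normal functions can vanish in $\mathrm{Hg}^0(\IH^1(\Gm,\widehat{\M}(4)))$. The Hodge-theoretic linear independence lifts via BHC-compatibility of \eqref{e4b} to linear independence of the cycles themselves in $\CH^4(X_t,4)_\QQ$, as required.

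Third, for the integral model statement, I would run the extension heuristic outlined just after \eqref{e5e}. Working on the double cover $\widehat{\X}\to\PP^1_{\hat t}$, the coordinate symbol $\hat\xi$ extends across $\widehat{\fx}_{\hat t}$ as soon as no prime introduces a toric-boundary fiber, i.e.\ whenever $\hat t^{-1}\in\ZZ$; since $\fz=\mathcal{I}^*\hat\xi$ and $\mathcal{I}$ sends $\hat t\mapsto(256\hat t)^{-1}$, extension of $\fz$ requires $256\hat t\in\ZZ$. Imposing both conditions simultaneously forces $\hat t=2^{-\ell}$ with $0\le\ell\le8$; excluding the conifold $\ell=4$ and identifying $\ell\leftrightarrow 8-\ell$ under $\mathcal{I}$, the representatives with $\hat t<2^{-4}$ are $\ell\in\{5,6,7,8\}$, giving $t=\hat t^2=4^{-4-k}$ for $k=1,2,3,4$.

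The main obstacle I anticipate is not any single computation but the careful bookkeeping in the first step: identifying $\{\ve_0,\ve_2\}$ with a genuine $\QQ$-rational basis of the anti-invariant homology (rather than merely a $\CC$-basis of solutions to the Picard--Fuchs equation), so that the $2\times 2$ determinant above agrees with the determinant of $\rbe_X$ in \eqref{eK4} up to a nonzero rational factor. This requires invoking the $\QQ$-structure on $\MM^\vee$ from \S\ref{S2} and checking compatibility with the $\mathsf{F}_\infty$-action at $z\in(0,1)$ via the explicit form of $\bE(s,z)$ in Example~\ref{ex2a}.
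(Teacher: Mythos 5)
Your proposal is correct and follows essentially the same route as the paper: Theorem \ref{tK4} is assembled from the Frobenius-deformation period computations \eqref{e6a}, \eqref{e6a'}, \eqref{e6b}, \eqref{e6d} over the $\mathsf{F}_{\infty}$-anti-invariant $\QQ$-basis $\{\ve_0,\ve_2\}$ (whose rationality and parity are fixed at the start of \S\ref{S6}), together with the integral-model divisibility argument following \eqref{e5e}, and your reading of the determinant entries (with \eqref{e6b} in place of the reference to \eqref{e6c} in \eqref{eRK4}, the latter being the generating-function definition) matches Theorem A. Your explicit linear-independence argument via the distinct singularities of $\nu_{\xi}$ at $0$ and $\nu_{\fz}$ at $\infty$ is a small but worthwhile supplement that the paper leaves implicit.
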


Now the representation \eqref{eRK4} is valid at the $t_k$; and by Example \ref{ex4a}(a), $r(t_k)$ should be a rational multiple of $L''({M}_{z_k},0)$.  According to our numerical implementation in MAGMA, we get
\begin{equation}\label{eNUM4}
\frac{L''(0)}{r(t)} \;\overset{\text{\color{red}num}}{=} \;4,\frac{64}{3},8,4\;\;\; \text{ for }\;\;\;t=4^{-5},4^{-6},4^{-7},4^{-8},
\end{equation}
where 
$L(s)$ means $L(M_{2^8t},s)$.

\begin{rem}\label{rVoight}
This numerical evidence is most interesting if one knows that these points are not special, i.e.~that the Zariski closure of the Galois representation is the full $\mathrm{GSp}_4$.  We are grateful to J.~Voight for running a numerical computation of the Sato-Tate moments in Magma for $M_{z_4}$, which shows that this is indeed the case.
\end{rem}


\section{Intermezzo on Hadamard products}\label{S7}

In this section, we first briefly recall the nature of Hadamard products from the point of view of periods and hypergeometric VHS.  Then we show how to use them to compute the regulator period $\int_{\varepsilon_0}R_{Z_t}$, for a $K_4$-cycle $Z_t$ equivalent to the coordinate symbol $\xi_t$ from \S\ref{S6}.  In \S\ref{S7}, the same technique will be used in a more involved computation for $K_2$ of a different hypergeometric CY3 family.

\subsection{Periods}

Let $W^{(j)}\to \PP^1_s$, $j=1,2$, be two families of (generically) smooth projective varieties, with fibers of dimension $d_j$.  Write $\mu^{(j)}$ for families of holomorphic $d_j$-forms (on these fibers), and $\varepsilon^{(j)}$ for families of $d_j$-cycles well-defined on a punctured disk about $s=0$, with periods of the form $\pi^{(j)}(s):=\int_{\varepsilon^{(j)}_s}\mu^{(j)}_s=\sum_{k\geq 0}a^{(j)}_k s^k$.  

Form the family of Hadamard products $V_t:=W^{(1)}\times_{\PP^1_s}\rho_t^* W^{(2)}$, where $\rho_t\in \mathrm{Aut}(\PP^1)$ sends $s\mapsto t/s$.  These $(d_1+d_2+1)$-folds satisfy a sort of analytic ``Freshman's Dream'' for periods.  Namely, let $\pi_{(j)}$ be the obvious projections, and write $\mu_t=\pi_{(1)}^*\omega^{(1)}\wedge \pi_{(2)}^*\rho_t^* \omega^{(2)}\wedge \tfrac{ds}{2\pi\ay s}$ for holomorphic forms,\footnote{The assumption is that $\omega^{(j)}$ are holomorphic sections of the extended Hodge bundle, vanishing at $s=\infty$.  So the vanishing of $\omega^{(1)}$ resp.~$\rho_t^*\omega^{(2)}$ at $s=\infty$ resp.~$0$ cancels out the poles of $\tfrac{ds}{s}$ at $0$ and $\infty$.} and $\varepsilon_t=\cup_{|s|=\epsilon}\varepsilon_s^{(1)}\times \varepsilon_{t/s}^{(2)}$ for topological cycles (with $0<|t|\ll \epsilon\ll 1$).  Then we have
\begin{equation}\label{eHP}
\pi(t):=\int_{\varepsilon_t}\omega_t=\frac{1}{2\pi\ay}\oint_{|s|=\epsilon}\sum_{k\geq 0}a^{(1)}_ks^k\sum_{\ell\geq 0}a^{(2)}_{\ell}\frac{t^{\ell}}{s^{\ell}}\frac{ds}{s}=\sum_{k\geq 0}a^{(1)}_ka^{(2)}_k t^k.
\end{equation}

In the case where the periods are hypergeometric, with $\pi^{(j)}(s)=\sum_{k\geq 0}\prod_{i=1}^{m_{j}}\tfrac{[\fa^{(j)}_i]_k}{[\fb^{(j)}_i]_k}C_{(j)}^k s^k$, \eqref{eHP} takes the form $\sum_{k\geq 0}\prod_{i=1}^{m_{1}+m_{2}}\tfrac{[\fa_i]_k}{[\fb_i]_k}C^k t^k$, where $C=C_{(1)}C_{(2)}$ and $(\fa,\fb)=(\fa_{(1)}\cup \fa_{(2)},\fb_{(1)}\cup \fb_{(2)})$ is given by concatenating hypergeometric indices. 

\subsection{Algebraic cycles}
We are going to construct a $K_4$-cycle on the hypergeometric CY3-family of type $(\fa,\fb)=((\tfrac{1}{2},\tfrac{1}{2},\tfrac{1}{2},\tfrac{1}{2}),(1,1,1,1))$ via $K_2 \cup K_2$ for two copies of ``Legendre'', i.e.~$(\fa,\fb)=((\tfrac{1}{2},\tfrac{1}{2}),(1,1))$.  Here the elliptic surfaces $\mathcal{E}^{(j)}\to \PP^1_s$ are both given by the Kodaira compactification of $1-s\phi(x_j,y_j)=0$ with $\phi(x,y)=x^{-1}y^{-1}(x-1)^2(y-1)^2$.  (The fibers are $\mathrm{I}_4$, $\mathrm{I}_1$, $\mathrm{I}_1^*$, and this is precisely the family from \S\ref{Sreg}.)  As above, let $\rho_t$ denote the involution of $\PP^1$ (or $\mathbb{G}_m$) induced by $s\mapsto t/s$, and let $\tx_t\overset{\pi}{\to}\PP^1_s$ denote some desingularization of the fiber product $X_t=\mathcal{E}^{(1)}\times_{\PP^1}\rho_t^*\mathcal{E}^{(2)}\to \PP^1_s$ with fiber $E^{(1)}_s\times E^{(2)}_{t/s}$ over $s$.

For simplicity, we are going to construct the cycle that's defined without passing to the 2:1-basechange.  (Obviously the same idea will work for the second cycle $\fz$; this is similar enough to the $K_2\cup K_0$ example below that we leave it to the reader.)  We use two copies of the cycle from \S\ref{Sreg}.  Let $Z^{(j)}$ denote the extension of the symbol $\{x_j,y_j\}$ to an element of $\mathrm{CH}^2(\mathcal{E}^{(j)}\setminus E^{(j)}_0,2)$, and $\pi_1:\tilde{X}_t\to \mathcal{E}^{(1)}$, $\pi_2\colon \tilde{X}_t\to \rho_t^*\mathcal{E}^{(2)}$ the projections.  Then $\pi_1^*Z^{(1)}\cup \pi_2^*\rho^*_tZ^{(2)}\in \mathrm{CH}^4(\pi^{-1}(\GG_m),4)$ extends to $Z_t\in \mathrm{CH}^4(\tx_t,4)$ since $Z^{(1)}_{\infty}$ and $(\rho_t^*Z^{(2)})_0$ vanish identically \emph{as precycles} in $Z^2(E_{\infty}^{(1)},2)$ resp.~$Z^2((\rho_t^*\mathcal{E}^{(2)})_0,2)\cong Z^2(E^{(2)}_{\infty},2)$.

\begin{rem}
(i) In general, if $\tilde{X}_t$ is a desingularization of a Hadamard product of two elliptic surfaces $\mathcal{E}^{(j)}$, one can ask when the product of two precycles $Z^{(j)}$ (as above) over $\mathbb{G}_m$ can be extended to a \emph{closed} precycle on all of $\tilde{X}_t$.  If both are ``usual algebraic cycles'' (of $K_0$ type), Zariski closure will suffice; but if at least one is a ``higher'' precycle, the desingularization complicates matters.  A sufficient condition is to have each $Z^{(j)}_{\infty}$ vanish in motivic cohomology of $E_{\infty}^{(j)}$ --- this is what happens with $K_2\cup K_0$ in \S\ref{S8} --- which is weaker than the ``zero on the nose'' property we had here.  However, if they only vanish in the higher Chow group (motivic Borel-Moore homology) of $E_{\infty}^{(j)}$, then the extension can provably fail to exist.

(ii) The specific cycle being constructed here is equivalent to (the extension of) $\{z_1,z_2,z_3,z_4\}$ in $\mathrm{CH}^4(Y_t,4)$ with $Y_t$ the compactification of $t^{-1}\prod z_i=\prod(z_i-1)^2$ in $(\PP^1)^4$, since we just take the product of the equations $(s/t)x_2y_2=(x_2-1)^2(y_2-1)^2$ and $(1/s)x_1y_1=(x_1-1)^2(y_1-1)^2$.
\end{rem}

\subsection{Regulators} Moving to the regulator currents, write $(T_j,\Omega_j,R_j)$ for the triples associated to $\pi_1^*Z^{(1)}$ and $\pi_2^*\rho_t^*Z^{(2)}$ as precycles on $\tx_t$.  Here $\Omega_1=\tfrac{dx_1}{x_1}\wedge \tfrac{dy_1}{y_1}=\omega^{(1)}\wedge \tfrac{ds}{s}$, $\Omega_2= \tfrac{dx_2}{x_2}\wedge \tfrac{dy_2}{y_2}=\rho_t^*\omega^{(2)}\wedge (-\tfrac{ds}{s})$, and $T_j$ is the chain parametrized by $(x_j,y_j)\in \RR_-^{\times 2}$, which can be thought of as the family of nonvanishing cycles $\beta_s$ over a cut from the north pole to the conifold.  Since these cuts (basically $[0,\tfrac{1}{16}]$ and $[16t,\infty]$) don't meet, and $\tfrac{dt}{t}\wedge\tfrac{dt}{t}=0$, we get $(T,\Omega,R):=$
\begin{align}\label{e91}
(T_1,\Omega_1,R_1)\cup (T_2,\Omega_2,R_2) &= (T_1\cap T_2,\Omega_1\wedge\Omega_2,R_1\Omega_2+(2\pi\ay)^2\delta_{T_1}R_2)\\
&=(0,0,-R_1\wedge \rho_t^*\omega^{(2)}\wedge\tfrac{ds}{s}+(2\pi\ay)^2\delta_{T_1}R_2)\nonumber
\end{align}
which ensures that $R\in \mathcal{D}^3(\tx_t)$ is closed for general $t$. 

\begin{rem}
The first line of \eqref{e91} is cup-product in Deligne-Beilinson cohomology of $\tx_t$, viewed as a cone complex.  \emph{Warning:}  $T$ and $\Omega$ are not zero on the total space $\cup_t \tx_t$ of the family of CY3s; for instance, $\Omega$ is the obvious $(4,0)$ form $\bigwedge \tfrac{dz_i}{z_i}$, which pulls back to $0$ on each fiber.
\end{rem}

Taking $0<|t|\ll \epsilon\ll 1$, we form the 3-cycle\footnote{I give $|s|=\epsilon$ the clockwise orientation here.} $\varepsilon_{0,t}=\cup_{|s|=\epsilon}\alpha_s^{(1)}\times \alpha_{t/s}^{(2)}$ where $\alpha$ is the vanishing 1-cycle near the resp.~north poles of the elliptic families.  Since
$$\int_{\alpha_{t/s}^{(2)}}\omega^{(2)}_{t/s}=2\pi\ay \sum_{m\geq 0}\binom{2m}{m}^2 \frac{t^m}{s^m}\;\;\;\;\;\;\text{and}\;\;\;\;\;\;\int_{\alpha_s^{(1)}}R_1=2\pi\ay\left(\pi\ay+\log(s)+\sum_{n>0}\binom{2n}{n}^2 \frac{s^n}{n}\right),$$
we get (with $S^1_{\epsilon}$ counterclockwise from $-\pi$ to $\pi$)
\begin{align*}
\int_{\varepsilon_{0,t}}R &=  (2\pi\ay)^2\oint_{S^1_{\epsilon}}\left\{\left(\pi\ay+\log(s)+\sum_{n>0}\binom{2n}{n}^2\frac{s^n}{n}\right)\left(\sum_{m\geq 0}\binom{2m}{m}^2\frac{t^m}{s^m}\right)\frac{ds}{s}\right\}
\\
&\hspace{1cm}-(2\pi\ay)^3\oint_{S^1_{\epsilon}\times\alpha^{(1)}}\delta_{[0,\frac{1}{16}]\times \beta^{(1)}}\left(\pi\ay+\log\left(\frac{t}{s}\right)+\sum_{m>0}\binom{2m}{m}^2\frac{t^m}{ms^m}\right)
\\
&=(2\pi\ay)^3\sum_{n>0}\binom{2n}{n}^4\frac{t^n}{n}+(2\pi\ay)^2\oint_{S^1_{\epsilon}}\log(s)\sum_{m\geq 0}\binom{2m}{m}^2\frac{t^m}{s^m}\frac{ds}{s}
\\
&\hspace{1cm}+(2\pi\ay)^3\left(\pi\ay+\log\left(\frac{t}{-\epsilon}\right)+\sum_{m>0}\binom{2m}{m}^2\frac{t^m}{m(-\epsilon)^m}\right)+(2\pi\ay)^3\pi\ay
\\
&=(2\pi\ay)^3\left(\pi\ay+\log(t)+\sum_{n>0}\binom{2n}{n}^4\frac{t^n}{n}\right)
\end{align*}
using $\oint \log(s) s^{-m}\frac{ds}{s}=2\pi\ay \log\epsilon$ for $m=0$ and $(-1)^{m-1}\frac{2\pi\ay}{m\epsilon^m}$ for $m>0$.  Taking imaginary parts,  this recovers the answer we already have in \eqref{e6a}.  Note that in this case we really did need both terms in the Deligne-Beilinson cup product and not just $R_1\Omega_2$.


\section{The Mellin-Barnes trick and Beilinson for $K_2$}\label{S8}

In this section, we consider the Hadamard product of the 2:1-basechanges of two hypergeometric families of elliptic curves, with $\fa^{(1)}=(\tfrac{1}{4},\tfrac{3}{4})$ and $\fa^{(2)}=(\tfrac{1}{2},\tfrac{1}{2})$ (and $\fb=(1,1)$ for both).  In this way we get a family of hypergeometric CY3 motives with data $\fa=(\tfrac{1}{4},\tfrac{1}{2},\tfrac{1}{2},\tfrac{3}{4})$, $\fb=(1,1,1,1)$ and Hodge numbers $(1,1,1,1)$.  Moreover, by modifying the cup product of a $K_0$-cycle on the first elliptic family and a $K_2$-cycle on the second, we get a $K_2$-class on the CY3 motive whose regulator is given by a modified Deligne-Beilinson product.  This is enough to get a formula for one regulator period about $t=0$, whereupon analytic continuation via Mellin-Barnes can be applied to get formulas for all regulator periods about $t=\infty$.  The main theorem of this section is thus a formula for the regulator determinant on a neighborhood of $\infty$, which we can then use to check the Beilinson Conjecture.

\subsection{Cycles via Hadamard products}\label{Shad}

Consider the families $\mathcal{E}^{(j)}\to \PP^1_s$ given by Kodaira compactifications of $1-s\phi^{(j)}(x_j,y_j)=0$ with $$\phi^{(1)}=(y_1-(x_1-x_1^{-1}))^2/y_1\;\;\;\text{ and }\;\;\; \phi^{(2)}=(x_2-x_2^{-1})(y_2-y_2^{-1}),$$
with singular fibers $\mathrm{I}_4,\mathrm{I}_2,\mathrm{I}_2,\mathrm{I}_4$ resp.~$\mathrm{I}_4,\mathrm{I}_1,\mathrm{I}_1,\mathrm{I}_0^*$. 
The periods of the residue 1-forms $$\mu_s^{(j)}=\frac{1}{2\pi\ay}\omega^{(j)}_s=\frac{1}{2\pi\ay}\mathrm{Res}\left(\frac{dx_j/x_j\wedge dy_j/y_j}{1-s\phi^{(j)}}\right)$$ over the vanishing cycles at $s=0$ are given by $\sum_{n\geq 0}(-1)^n\binom{4n}{2n}\binom{2n}{n}s^{2n}$ resp.~$\sum_{n\geq 0}\binom{2n}{n}^2s^{2n}$.  As in the last section, we take $\tx_{\st}\overset{\pi}{\to} \PP^1_s$ to be a resolution of $X_{\st}=\mathcal{E}^{(1)}\times_{\PP^1}\rho_{\st}^*\mathcal{E}^{(2)}$ (where $\st=\sqrt{t}$); in particular, $\pi^{-1}(0)$ can be taken to be the blowup of $\mathrm{I}_4\times\mathrm{I}_4$ at the 16 ``vertices'' (which produces exceptional quadric surfaces).

Let $Z^{(2)}$ be the extension of the symbol $\{g_1,g_2\}:=\{\tfrac{1+y_2}{1-y_2},\tfrac{x_2-1}{x_2+1}\}$ to an element of the higher Chow group $\mathrm{CH}^2(\mathcal{E}^{(2)}\setminus E_{\infty}^{(2)},2)$.  This cycle is the pullback of $\{x_2,y_2\}$ under the obvious involution of the total space (sending $s\mapsto \tfrac{1}{16s}$).

Next, define $Z^{(1)}\in \mathrm{CH}^1(\mathcal{E}^{(1)},0)$ to be $2[(-1,0)]-2[(1,0)]$, and take (for $|s|<\tfrac{1}{8}$) the 1-chain $\Gamma_s$ to be twice the counterclockwise path on $E^{(1)}_s$ between these points, limiting (as $s\to 0$) to a path on the component $y_1=0$ of the $\mathrm{I}_4$ fiber over $0$.  Then $\partial\Gamma_s=Z^{(1)}_s$, and following the method of \cite[Example 10.12(b)]{Ke}, one computes
\begin{align*}
\int_{\Gamma_s}\mu^{(1)}_s&= 2\int_1^{-1}\frac{1}{2\pi\ay}\oint_{|y_1|=\epsilon}\sum_{k\geq 0} s^k(\phi^{(1)})^k\frac{dy_1}{y_1}\frac{dx_1}{x_1} \\&= 2\int_1^{-1}\sum_{k\geq 0}s^k [(\phi^{(1)})^k]_{(y_1)^0}\frac{dx_1}{x_1}=2\int_1^{-1}(1+2(x_1^{-1}-x_1)s+6(x_1^{-1}-x_1)^2s^2+\cdots)\frac{dx_1}{x_1}\\
&=-2\pi\ay+16s+24\pi\ay s^2+\mathcal{O}(s^3).
\end{align*}
Since we know by \cite[Prop.~5.1]{GKS} that applying $L^{(1)}=D^2+64s^2(D+\tfrac{1}{2})(D+\tfrac{3}{2})$ to this ``truncated normal function'' \emph{must} yield a constant multiple of $s$, we conclude that it yields exactly $16s$.  Thus $Z^{(1)}_s$ is generically nontorsion.  

On the other hand, $Z^{(1)}_0$ is trivial (twice a 2-torsion cycle), not just in the Chow group of $E^{(1)}_0\cong \mathrm{I}_4$, but in its motivic cohomology, as it is the divisor of the rational function $f(x_1):=(x_1+1)^2/(x_1-1)^2$ (on the component $\{y_1=0\}\subset \mathrm{I}_4$), which is $1$ at $x_1=0$ and $\infty$.  Now consider the cup product $\pi_1^*Z^{(1)}\cup \pi_2^*\rho_{\st}^*Z^{(2)}$ in $\mathrm{CH}^3(X_{\st}\setminus(\mathrm{I}_4\times\mathrm{I}_4),2)$.\footnote{Here $\mathrm{I}_4\times\mathrm{I}_4$ is just the fiber of the unresolved $X_{\st}$ over $s=0$.}  We can extend it as a precycle in $Z^3(X_{\st},2)$, with boundary on $\mathrm{I}_4\times\mathrm{I}_4$ of the form $Z_0^{(1)}\times \text{Res}(\rho_{\st}^*Z^{(2)})$ in $\{y_1=0\}\times \mathrm{I}_4$.  Adding the precycle $\{f\}\times \text{Res}(\rho_{\st}^*Z^{(2)})\in \text{im}\{Z^2(\{y_1=0\}\times\mathrm{I}_4,2)\to Z^3(X_{\st},2)\}$ to the cup-product precycle gives something closed on $X_t$, i.e.~in $\mathrm{CH}^3(X_{\st},2)$.  

But this is not enough to yield a higher cycle on the smooth model.  Fortunately, its pullback to $\{(0,0),(\infty,0)\}\times \mathrm{I}_4\subset \mathrm{I}_4\times\mathrm{I}_4$ vanishes identically thanks to $f$ being $1$ there.  Hence this yields an element of motivic cohomology $H^4_{\text{Mot}}(X_{\st},\ZZ(3))$, which pulls back to $H^4_{\text{Mot}}(\tx_{\st},\ZZ(3))\cong \mathrm{CH}^3(\tx_{\st},2)$.  (Basically, one needs the nontorsion Mordell-Weil class of the first family to restrict to torsion in motivic cohomology at the south pole, which is also true\footnote{Indeed, this is essentially forced by the monodromy exponents.  Though there is the issue of how to modify the hypergeometric pullbacks to ensure the cycle is defined over $\QQ$, as we did here.} in the 2 other hypergeometric examples enumerated in \S\ref{S5}.)  Call this cycle $Z_{\st}$.

For the triples of currents attached to $Z_{\st}$, the technical result we need\footnote{to appear as a special case of joint work of the authors with D.~Akman} is that the Deligne-Beilinson cup-product (of triples associated to $Z^{(1)}$ and $Z^{(2)}$) plus the term from the precycle over $s=0$ plus a coboundary in the cone complex equals $$(0,0,R'):=(0,0,R_1'\Omega_2\pm2\pi\ay\log(f(x_1))\text{Res}_{\mathrm{I}_4}(\Omega_2)\delta_{\{y_1=0\}\times\mathrm{I}_4}),$$ where 
\begin{align*}
\Omega_2 &=\text{dlog}(g_1)\wedge\text{dlog}(g_2)=\tfrac{4}{\phi^{(2)}}\tfrac{dx_2}{x_2}\wedge\tfrac{dy_2}{y_2}=4\tfrac{\st}{s}\tfrac{dx_2}{x_2}\wedge\tfrac{dy_2}{y_2}\\
&=4\tfrac{\st}{s}\rho_{\st}^*\omega^{(2)}\wedge (-\tfrac{ds}{s})=-4\st \rho_{\st}^*\omega^{(2)}\wedge \tfrac{ds}{s^2}
\end{align*}
and $R_1'$ is a current representing the lift of the $K_0$ Abel-Jacobi class guaranteed by Theorem \ref{t3a}.  Since we have $L^{(1)}\int_{\Gamma_s}\mu^{(1)}_s=16 s$, and $\mathsf{Q}_0=\tfrac{1}{4}$, the theorem says that $\nabla_D[R_1']_{E^{(1)}_s}=4 s\omega^{(1)}_s$.  Moreover, $R_1'$ restricts to $d[\log(f)]$ at $s=0$ so its class is zero there, and thus $R_1'=4 \int_0 \omega_s^{(1)}ds$.  Finally, the second term in $R'$ can be ignored for our purposes since it is supported over $s=0$ and the integration cycle $\varepsilon_{0,\hat{t}}$ is supported over $|s|=\epsilon$ (where $0<|t|\ll \epsilon \ll 1$ and we use the clockwise orientation).

Which is all to say that the final ``Hadamard regulator product'' takes the simple form:
\begin{align*}
\int_{\varepsilon_{0,\st}}R'&= \int_{\varepsilon_{0,\st}}R_1'\Omega_2=4\oint_{S_{\epsilon}^1}\left(\int_{\alpha_s^{(1)}}R_1'\right)\left(\int_{\alpha_{\st/s}^{(2)}}\st\omega^{(2)}_{\st/s}\right)\frac{ds}{s^2} \\
&= 16(2\pi\ay)^2\oint_{S_{\epsilon}^1}\left(\int_0^s \sum_{n\geq 0}(-1)^n\textstyle\binom{4n}{2n}\binom{2n}{n}\tilde{s}^{2n}d\tilde{s}\right)\left(\sum_{m\geq 0}\textstyle\binom{2m}{m}^2\frac{\st^{2m+1}}{s^{2m}}\right)\frac{ds}{s^2}\\
&=16(2\pi\ay)^2 \oint_{S_{\epsilon}^1}\left(\sum_{n\geq 0}\textstyle(-1)^n\binom{4n}{2n}\binom{2n}{n}\frac{s^{2n+1}}{2n+1}\right)\left(\sum_{m\geq 0}\textstyle\binom{2m}{m}^2\frac{\st^{2m+1}}{s^{2m+1}}\right)\frac{ds}{s}\\
&=16(2\pi\ay)^3\sum_{n\geq 0}(-1)^n\binom{4n}{2n}\binom{2n}{n}^3\frac{\st^{2n+1}}{2n+1}.
\end{align*}
Taking $D_{\st}$ of this gives $16\st$ times a 2:1 pullback of the standard hypergeometric period\footnote{The factor of $(2\pi\ay)^3$ makes this the period of the $\QQ$-de Rham residue form $\omega$.} for $(\fa,\fb)=((\tfrac{1}{4},\frac{1}{2},\tfrac{1}{2},\tfrac{3}{4}),(1,1,1,1))$, which is exactly as expected.

However, there are two important things to notice here.  First, the pullback is by $\th\mapsto -\th^2$ (not $\th^2$), i.e.~pullback composed with an ``imaginary rotation''.  Next, the $\tx_{\st}$ on which we have constructed our $K_2$-cycle is not birational to a CY.  We must take its quotient $Y_{\st}$ by the order-2 automorphism $$(x_1,y_1,x_2,y_2,s)\mapsto (-x_1,-y_1,x_2,-y_2,-s).$$  This is at least birational to the Hadamard product of the original hypergeometric families, with $\mathrm{rk}((\mathrm{Gr}^W_3H^3(Y_{\st}))^{3,0})=1$.  Since (twice) the cycle $\varepsilon_{0,\st}$ is pulled back from $Y_{\st}$, the push-forward of $Z_{\st}$ to $\mathrm{CH}^3(Y_{\st},2)$ has nontorsion regulator and gives the correct extension.

Finally, what we are really interested in is the hypergeometric motive $M_z=M_{2^{10}t}$ with period $\sum_{n\geq 0}\binom{4n}{2n}\binom{2n}{n}t^n$.  Writing $\mathcal{M}_z$ for the corresponding Hodge structure, we have inclusions $\mathcal{M}_{-2^{10}\th^2}\subset \mathrm{Gr}^W_3H^3(Y_{\st})\subset H^3(\tx_{\st})$.  For certain nonzero rational values $\hat{t}=q\in \QQ^{\times}$ (which include the positive integers and are identified below), the cycle $Z_q\in H_{\text{Mot}}^4(\tx_q,\QQ(3))$ we have constructed extends to an integral model, hence should satisfy the Beilinson Conjecture for $M_{-2^{10}q^2}$ (not $M_{2^{10}q^2}$, due to the ``rotation'').  The regulator period computed above is unsuitable in two senses:  the series does not converge where we want to compute the regulator (for large $q$), and the fact that it is imaginary where it is defined (for small $q$) means that $\varepsilon_0$ is anti-invariant under $\mathsf{F}_{\infty}$.  Here passing to the real regulator actually means taking the real part of the regulator periods, which would be zero for this one!

To state the Beilinson Conjecture for this case, let $\omega_{\hat{t}}$ be the $\QQ$-dR form with period $(2\pi\ay)^3\sum_{n\geq 0}(-1)^n \binom{4n}{2n}\binom{2n}{n}\hat{t}^{2n}$ and $r_{{\hat{t}}}=\mathrm{Re}(R'_{\hat{t}})$ the real regulator class.  Let $\delta_1,\delta_2$ be a basis of the $\mathsf{F}_{\infty}$-invariant classes in $H_3(Y_{\hat{t}},\QQ)$.  Then we should have
\begin{equation}\label{eDET}
L'(M_{-2^{10}q^2},1)\sim \frac{1}{(2\pi\ay)^4}\left| \begin{matrix}\int_{\delta_1}\omega_{q} & \int_{\delta_1} r_{q} \\ \int_{\delta_2}\omega_{q} & \int_{\delta_2} r_{q} \end{matrix}\right|,
\end{equation}
in which all of the matrix entries are real.

\subsection{Computation of regulator periods}

In this final subsection, we work out the regulator determinant \eqref{eDET} in Beilinson's Conjecture for the $K_2$-element in the $(\fa,\fb)=((\tfrac{1}{4},\frac{1}{2},\tfrac{1}{2},\tfrac{3}{4}),(1,1,1,1))$-hypergeometric family.  Above, we had to construct the family of cycles on the 2:1-basechange of the pullback of the family by $z\mapsto -z$ (where $z$ is the hypergeometric parameter, and $t=2^{-10}z$ the algebro-geometric parameter).  The integral regulator class $R$ satisfies $DR=8\sqrt{t}\omega=\tfrac{\sqrt{z}}{4}\omega$, where $\omega$ was the standard $\QQ$-dR residue form, and a computation showed that it had a period of the form in the first line of
\begin{align*}
R_0&=16(2\pi\ay)^3\sum_{n\geq 0}(-1)^n\binom{4n}{2n}\binom{2n}{n}^3\frac{\sqrt{t}^{2n+1}}{2n+1}\\
&=\tfrac{1}{4}(2\pi\ay)^3\sum_{n\geq 0}(-1)^n\frac{[\frac{1}{4}]_n[\frac{1}{2}]_n^2[\frac{3}{4}]_n}{n!^4(n+\frac{1}{2})}z^{n+\frac{1}{2}}\\
&=\tfrac{1}{4}(2\pi\ay)^3\int_{\mathscr{C}_{\textup{right}}}\frac{\Gamma(-s)\Gamma(1+4s)\Gamma(1+2s)z^{s+\frac{1}{2}}}{\Gamma(1+s)^52^{10s}(s+\frac{1}{2})}\frac{ds}{2\pi\ay}.
\end{align*}
In order to change the Mellin-Barnes style integral from a right-half-plane loop to a left-half-plane loop, it is convenient to introduce a few expressions. Begin by setting
$$\bbg^{\alpha}_k(\fa,\fb):=\prod_i \frac{\Gamma(\fb_i-1+\alpha+k)}{\Gamma(\fa_i+\alpha+k)},$$
and write for short $\bbg^{\alpha}_k:=\bbg^{\alpha}_k((\tfrac{1}{4},\tfrac{1}{2},\tfrac{1}{2},\tfrac{3}{4}),(1,1,1,1))$.  Put
\begin{align*}
\ta&:= \sum_{k\geq 0}\frac{(-1)^k\bbg^{\frac{1}{4}}_{k}}{k-\frac{1}{4}}z^{-k}, \;\;
\tb:= \sum_{k\geq 0}\frac{(-1)^k\bbg^{\frac{3}{4}}_{k}}{k+\frac{1}{4}}z^{-k},\;\;
\tc:= \sum_{k > 0}\frac{(-1)^k\bbg^{\frac{1}{2}}_{k}}{k}z^{-k}\\
\td&:= \sum_{k\geq 0}\frac{(-1)^k\bbg^{\frac{1}{2}}_{k}}{k}\left\{4H_{4k+1}-10H_{2k}+6H_k+\frac{1}{k}\right\}z^{-k}
\end{align*}
and
$$\bba:= \sum_{k\geq 0}{(-1)^k\bbg^{\frac{1}{4}}_{k}}z^{-k}, \;\;
\bbb:= \sum_{k\geq 0}{(-1)^k\bbg^{\frac{3}{4}}_{k}}z^{-k},\;\;
\bbc:= \sum_{k \geq 0}{(-1)^k\bbg^{\frac{1}{2}}_{k}}z^{-k};$$
and note that
$$D\ta z^{\frac{1}{4}}=-\bba z^{\frac{1}{4}},\;\;D\tb z^{-\frac{1}{4}}=-\bbb z^{-\frac{1}{4}}, \;\; D(\tc-\bbg^{\frac{1}{2}}_0\log(z))=-\bbc$$ 
where $\bbg^{\frac{1}{2}}_0=2\sqrt{2}\pi$.

Computing the left-half-plane integral now gives
$$
R_0 = \ay\pi\ta z^{\frac{1}{4}}-\ay\pi\tb z^{-\frac{1}{4}}-2\sqrt{2}\ay \log(4z)\tc -2\sqrt{2}\ay\td+4\pi\ay(\log(4z)+4)^2,
$$
where we are allowed to work modulo terms in $\QQ(3)=(2\pi\ay)^3\QQ$. (After all, the regulator class is only defined modulo $\MM(3)$, where $\MM$ is the local system, and we are ultimately interested in real parts of periods.) Analytically continuing $R_0$ around the origin twice and adding (call the result S) eliminates the A and B expressions, whereupon analytically continuing twice and subtracting yields\footnote{This $R_1$ can also be written as a Frobenius deformation.  It is the coefficient of $s^0$ in $-16\sqrt{2}\pi\sum_{k\geq 0}\frac{(-1)^{k+s}}{k+s}\bbg^{\frac{1}{2}}_{k+s}z^{-(k+s)}$.}
$$R_1 := -16\sqrt{2}\pi\tc+64\pi^2(\log(4z)+4).$$
Subtracting this from twice S yields
$$R_2:=8\sqrt{2}\ay \log(4z)\tc+8\sqrt{2}\ay\td+256\pi\ay-16\pi\ay(\log(4z)+4)^2.$$
If we subtract rather than added at the beginning of this process, then double, then subtract $R_1$, we get
$R_3:=4\pi\ay\ta z^{\frac{1}{4}}-4\pi\ay\tb z^{-\frac{1}{4}}.$
Finally, analytically continuing $R_3$ \emph{once} counterclockwise in $z$ gives a regulator period of \emph{minus} the cycle (since it is multivalued), which is a period of the $K_2$-class over minus the analytically continued 3-cycle:
$$R_4:=4\pi\ta z^{\frac{1}{4}}+4\pi\tb z^{-\frac{1}{4}}.$$
The entire process is guaranteed to give periods over integral 3-cycles.  Notice that $R_1$ and $R_4$ are real and so \emph{are} the real regulator periods $r_1$ and $r_4$ for real $z$ with $|z|>1$.

To get the periods of $\omega$ over the same integral 3-cycles, we use $\frac{4}{\sqrt{z}}DR_j=\omega_j$.  Writing $\hat{R}_j:=(2\pi\ay)^{-2}R_j$ and $\hat{\omega}_j:=(2\pi\ay)^{-2}\omega_j$, the desired matrix (with determinant the right-hand side of \eqref{eDET}) is then
\begin{equation}\label{eDETK2}
\mathfrak{R}_z:=\left(\begin{matrix}\hat{R}_1 & \hat{\omega}_1 \\ \hat{R}_4 & \hat{\omega}_4\end{matrix}\right)=\left(\begin{matrix} \frac{4\sqrt{2}}{\pi}\tc-16(\log(4z)+4) & -\frac{16\sqrt{2}}{\pi}\bbc z^{-\frac{1}{2}} \\ -\frac{1}{\pi}(\ta z^{\frac{1}{4}}+\tb z^{-\frac{1}{4}}) & \frac{4}{\pi}(\bba z^{-\frac{1}{4}}+\bbb z^{-\frac{3}{4}}) \end{matrix}\right).
\end{equation}
By virtue of Remark \ref{rZ}, one expects that the cycle ``upstairs'' extends to an integral model for $\sqrt{t}=n\in \ZZ\setminus\{0\}$, which gives $z=2^{10}n^2$.  Keeping in mind that, due to the $(-1)^n$ in the series, our Calabi-Yau 3-fold $X_z$ (on which we have our cycle) has $H^3(X_z)=M_{-z}$ where $M$ is the hypergeometric family of motives, \eqref{eDET} predicts that $L'(M_{-2^{10}n^2},1)/\det(\mathfrak{R}_{2^{10}n^2})\in \QQ^{\times}$ for (apparently) any $n$.

In fact, we can both improve and simplify this prediction.  

\begin{prop}\label{p10a}
The cycle $Z_{\st}$ extends to an integral model $\fx_{\st}$ of $\tx_{\th}$ when $\st=n/2^o$ \textup{(}with $n\in \ZZ_{>0}$ and $o\in \ZZ_{\geq 0}$\textup{)}, i.e.~$z=2^{10-2o}n^2$ or $t=n^2/4^o$.
\end{prop}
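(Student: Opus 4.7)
The plan is to exhibit, for each $\st = n/2^o$ with $n\in\ZZ_{>0}$ and $o\in\ZZ_{\geq 0}$, a flat integral model $\fx_{\st}$ of $\tx_{\st}$ together with a spreading-out of $Z_{\st}$ to a motivic cohomology class on it. The guiding principle, from Remark \ref{rZ}, is that since $\nu_{Z_{\st}}$ is singular at $\st=\infty$ (equivalently $s=0$ after $\rho_{\st}$), the cycle extends precisely when the fiber over $s=0$ does not reappear as a component of any special fiber $\fx_{\st,\mathbb{F}_p}$.

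First, I would construct $\fx_{\st}$ by clearing denominators. The equation for $\rho_{\st}^{*}\mathcal{E}^{(2)}$ reads $\st(x_2^2-1)(y_2^2-1)=s\,x_2y_2$; substituting $\st=n/2^o$ and rescaling the Hadamard parameter by $s\mapsto 2^o s$ (an isomorphism of the open locus $\Gm\subset \PP^1_s$ on which the cycle lives) yields the $\ZZ$-integral equation $n(x_2^2-1)(y_2^2-1)=s\,x_2y_2$. The $\mathcal{E}^{(1)}$-side has the $\ZZ$-integral equation $s(x_1y_1-x_1^2+1)^2=x_1^2y_1$ to begin with, so the fiber product has a flat $\ZZ$-model, and blowing up the sixteen vertices of $\mathrm{I}_4\times\mathrm{I}_4$ over $s=0$ gives the desired $\fx_{\st}$ over $\Spec(\ZZ)$. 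The restriction to 2-power denominators arises because only a 2-power rescaling of $s$ can absorb the denominator of $\st$ without disturbing the $\ZZ$-integrality of the $\mathcal{E}^{(1)}$-equation (equivalently, the toric symmetries and hypergeometric indices $(\tfrac{1}{4},\tfrac{1}{2},\tfrac{1}{2},\tfrac{3}{4})$ privilege the prime 2).

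Second, I would check that each of the three ingredients of $Z_{\st}$ is manifestly $\ZZ$-defined on $\fx_{\st}$: the $K_0$-class $Z^{(1)}=2[(-1,0)]-2[(1,0)]$ has $\ZZ$-rational support; the symbol $\{g_1,g_2\}$ with $g_1=(1+y_2)/(1-y_2)$ and $g_2=(x_2-1)/(x_2+1)$ spreads out to $\CH^2$ of the integral model of $\rho_{\st}^{*}\mathcal{E}^{(2)}$ off its fiber over $\infty$; and the corrector $f(x_1)=(x_1+1)^2/(x_1-1)^2$ has $\ZZ$-coefficients with $\mathrm{div}(f)|_{\{y_1=0\}}=Z^{(1)}_0$ identically, and $f(0)=f(\infty)=1$ on the nose. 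Hence the three pieces of the cup-product-plus-corrector construction of \S\ref{Shad} each admit a canonical $\ZZ$-extension.

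The main obstacle will be to verify that the closed-up precycle in fact represents a class in the integral motivic cohomology of $\fx_{\st}$, i.e.~that its boundary vanishes on every special fiber $\fx_{\st,\mathbb{F}_p}$ and not merely in characteristic zero. For primes $p\nmid 2n$ this should reduce to the generic calculation of \S\ref{Shad} verbatim. For $p\mid n$ or $p=2$ (where the rescaling by $2^o$ may interact with the integral resolution), the vanishing must be checked by hand on each of the sixteen exceptional quadrics over $s=0$, comparing $\mathrm{div}(f)$ with the divisors of $g_1,g_2$ modulo $p$. By the explicit form of these rational functions I expect this to reduce to an elementary, if combinatorially involved, residue calculation --- in parallel with the integrality check underlying Theorem \ref{tK4}(b).
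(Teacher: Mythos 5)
There is a genuine gap: your proposal misses the single observation on which the paper's (very short) proof turns, and substitutes an incorrect mechanism for why the prime $2$ is allowed in the denominator of $\st$. The actual obstruction is this: $Z^{(2)}$ lives on $\mathcal{E}^{(2)}\setminus E^{(2)}_{\infty}$, and after pulling back by $\rho_{\st}\colon s\mapsto \st/s$ its residue sits over $s=0$. For a prime $p$ dividing the denominator of $\st$, the parameter $\st$ reduces to $\infty$ mod $p$, so $\st/s\equiv\infty$ for \emph{every} $s$; the bad fiber $E^{(2)}_{\infty}$ then reappears over all of $\PP^1_s$ in characteristic $p$, not just over $s=0$. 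A priori this forces $\st\in\ZZ$. The paper's way out is that the boundary of each component of the precycle is $\pm 2$ times the integral cycle $([(-1,0)]-[(1,0)])\times\mathrm{Res}(\rho_{\st}^*Z^{(2)})$ --- the factor of $2$ built into $Z^{(1)}=2[(-1,0)]-2[(1,0)]$ --- so the boundary vanishes identically upon reduction mod $2$, and a $2$-power denominator is harmless. Your proposal never uses this factor of $2$, which is the entire content of the proposition.

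Your substitute explanations do not work. Rescaling $s\mapsto 2^o s$ merely transports the degeneration from the $\mathcal{E}^{(2)}$-side to the $\mathcal{E}^{(1)}$-side (the fiber $E^{(1)}_{s/2^o}$ then collapses to $E^{(1)}_{\infty}$ over all of $\PP^1_s$ mod $2$), and in any case an integer rescaling by \emph{any} $c$ preserves $\ZZ$-integrality of both equations, so this cannot be what singles out the prime $2$; nor do the hypergeometric indices enter the argument. Finally, your plan to ``check by hand on the sixteen exceptional quadrics over $s=0$'' localizes the verification in the wrong place: in characteristic $2$ the problem is spread over the whole base, and no computation confined to $s=0$ can address it. The correct proof is one paragraph once you notice the factor of $2$ in the boundary.
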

\begin{proof}
Recall that $Z_{\th_0}$ is constructed on a 3-fold fibered over $\PP^1_s$ with fibers $E^{(1)}_s\times E^{(2)}_{\th/s}$.  It is given by $\pi_1^*Z^{(1)}\cup\pi_2^*\rho_{\th}^*Z^{(2)}$ plus a component over $s=0$, whose boundaries (which cancel) are $\pm 2$ times $([(-1,0)]-[(1,0)])\times \mathrm{Res}(\rho_{\th_0}^*{(2)})$.  At first glance, the potential problem with reduction mod $p$ is that $\th$ reduces to $\infty$ so that the residue of the first component (which comes from $E^{(2)}_{\infty}$) occurs not just over $s=0$ but over all of $\PP^1_s$.  So this would restrict us to $\th\in \ZZ$.  However, since the boundary (of each component) is \emph{twice} an integral cycle, it vanishes identically if we reduce mod $2$.  So we do not have to worry about $\th$ reducing to $\infty$ mod $2$, as claimed.
\end{proof}

For the simplified formula for the regulator determinant, we can repackage \eqref{eDETK2} in the following

\begin{thm}\label{tK2}
For $z\in \RR_{>1}$, write $$\Gamma^{\alpha}_k:=\prod_{i=1}^4\frac{\Gamma(\alpha+k)}{\Gamma(\alpha+k+\fa_i)},\;\; S_{\alpha}(z):=\sum_{k\geq 0}\Gamma^{\alpha}_kz^{-k},\text{ and }R_{\alpha}(z):=\sum_{k\geq 0}\Gamma^{\alpha}_k\frac{z^{-k}}{k-\frac{1}{2}+\alpha}$$ 
\textup{(}with $\sum_{k>0}$ in $R_{\alpha}$ if $\alpha=\tfrac{1}{2}$\textup{)}.  Then the real regulator determinant associated to $M_{-z}$, normalized as in RHS\eqref{eK2}, is $$r(t):=\det\left(\begin{matrix}  4(\log(4z)+4) -\frac{\sqrt{2}}{\pi}R_{\frac{1}{2}}(z) & \frac{4\sqrt{2}}{\pi\sqrt{z}}S_{\frac{1}{2}}(z) \\ -\frac{1}{4\pi}(z^{\frac{1}{4}}R_{\frac{1}{4}}(z)+z^{-\frac{1}{4}}R_{\frac{3}{4}}(z)) & \frac{1}{\pi}(z^{-\frac{1}{4}}S_{\frac{1}{4}}(z)+z^{-\frac{3}{4}}S_{\frac{3}{4}}(z)) \end{matrix}\right),$$
where $z=2^{10}t$.
\end{thm}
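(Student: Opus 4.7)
The proof amounts to organizing the Mellin-Barnes analytic continuation carried out in the preceding subsection into the displayed matrix form. First, I would verify the right-half-plane Mellin-Barnes representation of $R_0 = \int_{\varepsilon_{0,\st}} R'$, whose residues at $s = 0, 1, 2, \ldots$ of $\Gamma(-s)$ reproduce the series $\sum_n (-1)^n \binom{4n}{2n}\binom{2n}{n}^3 \sqrt{t}^{2n+1}/(2n+1)$ computed in \S\ref{Shad}. Closing the contour to the left then picks up poles of $\Gamma(1+4s)$, $\Gamma(1+2s)$, and $(s+\tfrac{1}{2})^{-1}$, which I would group by quarter-integer residue class to yield exactly the expression for $R_0$ shown in the text: a linear combination of $\ta z^{1/4}$, $\tb z^{-1/4}$, $\tc\log(4z)$, $\td$, and $(\log(4z)+4)^2$, now convergent on $z>1$.

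Next I would exploit the local monodromy $z \mapsto e^{2\pi\ay}z$, under which each Puiseux piece transforms by a fourth root of unity. Averaging $R_0$ with its pushforward by two loops kills the half-integer powers and yields the log-quadratic expression $S$, while differencing isolates them; the four linear combinations $R_1, R_2, R_3, R_4$ written in the text then emerge, of which $R_1$ and $R_4$ are manifestly real on $\RR_{>1}$. By construction these are periods of the regulator current over two $\mathsf{F}_\infty$-invariant integral $3$-cycles $\delta_1, \delta_2$, and hence equal the real regulator periods $\int_{\delta_i} r_z$.

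For the de Rham periods, Theorem \ref{t3a} specializes via \eqref{e5c'} (with $\mathsf{Q}_0 = \tfrac{1}{4}$) to $\nabla_D \tilde{\nu} = \tfrac{1}{4}(2\pi\ay)^3\sqrt{z}\,\mu$, so $\omega_j = \tfrac{4}{\sqrt{z}}\, D R_j$. Applying $D$ term by term using the identities $D(\ta z^{1/4}) = -\bba z^{1/4}$, $D(\tb z^{-1/4}) = -\bbb z^{-1/4}$ and $D(\tc - 2\sqrt{2}\pi\log z) = -\bbc$ recorded in the text recovers the second column of the matrix $\mathfrak{R}_z$ in \eqref{eDETK2}. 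Finally, since $\fb_i = 1$ for all $i$, one has $\bbg^\alpha_k = \Gamma^\alpha_k$, so the series $\ta, \tb, \tc$ and $\bba, \bbb, \bbc$ agree with $R_{\frac{1}{4}}, R_{\frac{3}{4}}, R_{\frac{1}{2}}$ and $S_{\frac{1}{4}}, S_{\frac{3}{4}}, S_{\frac{1}{2}}$ after absorbing the sign $(-1)^k$ via the substitution $z \mapsto -z$. A rescaling of rows and columns by fixed rational constants (which alters $\det$ only by a factor in $\QQ^\times$, irrelevant to the Beilinson prediction) then collapses $\mathfrak{R}_z$ into the displayed matrix.

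The main technical obstacle lies in the second step: certifying that the formal averages $R_1$ and $R_4$ genuinely represent periods of an integral lift of the normal function $\nu_{Z_\st}$ over integral homology classes, rather than merely real parts modulo $\QQ(3)$. Concretely, the spectator terms $256\pi\ay$ and $16\pi\ay(\log(4z)+4)^2$ appearing in $R_2$, together with the $\pi\ay$ discrepancies from $\log(-4z) = \log(4z) + \pi\ay$ arising in the $z \mapsto -z$ bookkeeping, must all be shown to lie in the local system $\MM(3)$. This normalization is exactly what the uniqueness clause of Theorem \ref{t3a} guarantees, so the difficulty is really one of careful accounting, rather than a genuinely new obstruction.
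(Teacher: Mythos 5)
Your proposal is correct and follows essentially the same route as the paper: the right-half-plane Mellin--Barnes representation of $R_0$, contour shift to the left grouped by residue class, monodromy averaging/differencing to extract the real integral periods $R_1$ and $R_4$, the relation $\omega_j=\tfrac{4}{\sqrt{z}}DR_j$ for the de Rham column, and a rational row rescaling of $\mathfrak{R}_z$ to reach the displayed matrix. The only quibbles are cosmetic (the powers killed by the two-loop averaging are quarter-integer powers $z^{\pm 1/4-k}$, not half-integer ones, and the $\mathsf{Q}_0=\tfrac14$ cited in \eqref{e5c'} pertains to the elliptic factor rather than directly to the CY3 family), and your closing paragraph correctly identifies the integrality/normalization bookkeeping as the delicate point, which the paper likewise handles by appeal to Theorem \ref{t3a} and working modulo $\MM(3)$.
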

After lengthy processing,\footnote{some of it on the MIT cluster; we thank Edgar Costa for helping us to implement this!} MAGMA and Pari tell us that
$$\frac{L'(1)}{r(t)} \;\overset{\text{\color{red}num}}{=}\;\frac{1}{64},\text{-}\frac{1}{16},\frac{1}{640},3,\text{-}40,\text{-}248\;\;\;\text{ for }\;\;\;t=   \frac{1}{16},\frac{1}{4},1,9,25,49,$$
where $L(s)$ means $L(M_{-2^{10}t},s)$.
We have also run this computation for numerous rational values of $z$ that do not satisfy Proposition \ref{p10a}.  None of them produced a rational number.

\vspace{2mm}

$$*\;\;\;*\;\;\;*$$

\vspace{2mm}

\begin{thx}
We thank Johannes Walcher and the members of the International Groupe de Travail  on differential equations in Paris for stimulating discussions, John Voight for Remark \ref{rVoight}, Edgar Costa for assistance in implementing a computation on the MIT computer cluster, and Salim Tayou for helpful comments.  VG thanks the Abdus Salam International Centre for Theoretical Physics
for its hospitality in 2023-2024, and the Simons Foundation for making his
stay in Trieste possible. VG acknowledges the school and workshop "Number Theory and Physics" at ICTP
in June-July 2024 where the final stages of this work were carried out for becoming a starting point for a number
of new collaborations. MK's work was partially supported by NSF Grant DMS-2101482, a Simons Foundation travel grant, and a visit to MPIM-Bonn in 2023.
\end{thx}
\pagebreak

\appendix

\section{$K_2$--regulators ``without cycles''}\label{appB}

Here we briefly mention another numerical verification of the Beilinson conjecture for $\CH^3(M_t,2)$ of fibers of a hypergeometric family of CY3 motives.  The point is the existence of a simple ansatz which apparently allows us to bypass varieties and cycles and get straight to regulator volumes.  While we offer no proofs here, the method is closely related to the more detailed calculations in \S\ref{S8}.  

To make our claim below a bit easier to swallow, we return once more to the philosophy of the Introduction and consider first some families of elliptic curves.  Denote
$$\mathbf{\Gamma}(s) = -\frac{1}{2} \cdot \frac{\Gamma(6s+1)\Gamma(s+1)^3}{\Gamma(2s+1)^3 \Gamma(3s+1)(3s-1/2)} 27^{-s},  \quad \widetilde{\mathbf{\Gamma}}(s) =  (2 \pi \ay)  \frac{\mathbf{\Gamma}(s+1/2) }{\mathbf{\Gamma}(1/2) }$$
and note that $\mathbf{\Gamma}(0) = 1, \, \mathbf{\Gamma}(1/2) = -\frac{1}{2} \cdot \frac{\pi}{\sqrt{27}}.$ Put
$$\Pi_{0}(t) = \sum_{n \in 1/2 + \ZZ_{\ge 0}} {\mathbf{\Gamma}}(n)  t^n
=-\frac{1}{2} \cdot \frac{\pi}{\sqrt{27}}\, t^{1/2} \cdot (
1
+ \frac{2}{9}\*t
+ \frac{10}{81}\*t^2
+ \frac{560}{6561}\*t^3
+ \frac{3850}{59049}\*t^4+ \dots)$$
and
$$\Pi_{1}(t) =  \sum_{n \in \ZZ_{\ge 0}} {\widetilde{\mathbf{\Gamma}}(n)  }t^n =2 \pi \ay \sum_{n \in  \ZZ_{\ge 0}} \frac{\mathbf{\Gamma}(n+1/2) }{\mathbf{\Gamma}(1/2) }t^n
=2 \pi \ay \cdot (
1
+ \frac{2}{9}\*t
+ \frac{10}{81}\*t^2
+ \frac{560}{6561}\*t^3
+ \frac{3850}{59049}\*t^4+ \dots).$$

The hypergeometric functions $\Pi_0 (t) , \Pi_1 (t)$
are periods in the families of elliptic curves\footnote{
	E(t)=ellinit([0,-27/4/t,0,27/2/t,-27/4/t])
	E1(t)=ellinit([1,0,1/27*t,0,0])
}
$$E_0(t) : \; y^2 = x^3 - \frac{27}{4} t^{-1} x^2 + \frac{27}{2} t^{-1} x - \frac{27}{4} t^{-1}\;\;\;\;\text{and}\;\;\;\;\;E_1(t) : \; y^2 + xy + \frac{1}{27} t  = x^3 $$
which are quadratic twists of each other. This makes the periods differ by a simple factor, but it is precisely this difference that is our central point: informally speaking, the ambiguity that arises in computing  the integrals  $\int \Pi_{0} \frac{dt}{t}$ resp.  $\int \Pi_{1} \frac{dt}{t}$ is measured by $(2 \pi \ay)  \mathbf{\Gamma}(0) = 2 \pi \ay $ resp.~$(2 \pi \ay) {\widetilde{\mathbf{\Gamma}}(0) } = (2 \pi \ay)^2 $ by a version of Cauchy's theorem. The distinction between the weight gaps of 1 and 3 in the respective mixed Hodge structures translates into the distinction between the types of algebraic classes whose regulators they encode: $\CH^1 (E_0(t),0)$ in the former case vs.~$\CH^2 (E_1(t),2)$ in the latter! We leave to the reader as an exercise the task of finding actual cycles whose regulators are given by the primitives  $\int^x \Pi_{0} \frac{dt}{t}, \, \int^x \Pi_{1} \frac{dt}{t}$.

A pair of hypergeometric indices $(\fa,\fb)$ is of ``principal type'' if there is no interlacing:  listing the indices in order, either the $\fa_i$ or the $\fb_i$ occur in an unbroken string.  In this case, all the Hodge numbers in the corresponding hypergeometric variation $\mathcal{H}=\mathcal{H}_{\fa,\fb}$ are $1$.  Let $\kappa_{\fa,\fb}$ denote the number of integer indices (i.e., with our conventions, the number of times ``$1$'' occurs).  In the elliptic examples just described, we have $\kappa_{(\frac{1}{6},\frac{5}{6}),(\frac{1}{2},\frac{1}{2})}=0$ and $\kappa_{(\frac{1}{3},\frac{2}{3}),(1,1)}=1$.  In general, without passing to double covers, we have a natural extension of $\mathcal{H}$ by the Tate class $\IH^1(\Gm,\mathcal{H})$.  Arguing as in the previous paragraph using the more uniform definition of $\mathbf{\Gamma}(s)$ in \eqref{eV*} above and the duplication formula $\Gamma (1-z)\Gamma (z)={\frac {\pi }{\sin \pi z}}$,\footnote{Alternatively, one can use the theory of admissible normal functions and formula \eqref{e3f} to see this.} one easily shows that \emph{the weight gap in a principal hypergeometric family of motives defined over $\QQ$ is $1+\kappa_{\fa,\fb}$}.

Turning to the promised CY 3-fold example, we take $(\fa,\fb)=((\tfrac{1}{5},\tfrac{2}{5},\tfrac{3}{5},\tfrac{4}{5}),(\tfrac{1}{6},\tfrac{5}{6},1,1))$, form the $S_n(t)$ as in \eqref{eV**}, and set
	$$r(t) :=  \mathop{\mathrm{det}} \mathop{\mathrm{Re}} \begin{pmatrix}
	S_0(t) & S_0'(t) \\ 
	
	S_1(t) & S'_1(t) 	
\end{pmatrix}.$$
Clearly we have a weight-gap of $3$ between the hypergeometric VHS $\mathcal{H}$ and $\IH^1(\Gm,\H)$, which predicts a $K_2$-type normal function.  In analogy to \S\ref{S8}, we make the following conjectural

\medskip

\sc Claim. \rm \emph{There exists a family of CY 3-folds $X_t$ with middle cohomology containing $\mathcal{H}$. Moreover:}

 (i) \emph{For every $t \in \QQ \cap (0,3125/432)$, there exists a higher cycle $Z_t \in \CH^3(X_t,2)$ whose real regulator volume \textup{(}in the sense of RHS\eqref{eK2}\textup{)} is given by $r(t)$.}
 

(ii) \emph{If in addition $t \in \ZZ$, then the cycle $Z_t$ extends to a cycle in $\CH^3(\mathfrak{X}_t,2)$ of an integral model.}

\medskip

\noindent Clearly we cannot construct the family or the cycle by Hadamard products, so one expects that $Z_t$ will be harder to realize than the cycle of \S\ref{S8}.  On the other hand, for the $L$--ratios one has numerically 
 $$\;\;\;\;\;\;\frac{L'(M_t,1)}{r(t)}\overset{\text{\color{red}num}}{=} -\frac{2}{5}, -4, 12,-2,24,40,-68/7\;\;\;\text{at}\;\;\;t=1,2,3,4,5,6,7,$$
which simultaneously gives evidence for the Claim and for Beilinson's Conjecture.

\pagebreak
\curraddr{${}$\\
\noun{ICTP Math Section}\\
\noun{Strada Costiera 11, Trieste 34151 Italy}
\email{${}$\\
\emph{e-mail}: vasily.v.golyshev@gmail.com}

\curraddr{${}$\\
\noun{Department of Mathematics, Campus Box 1146}\\
\noun{Washington University in St. Louis}\\
\noun{St. Louis, MO} \noun{63130, USA}}
\email{${}$\\
\emph{e-mail}: matkerr@wustl.edu}

\end{document}